\newcommand {\A}{{\mathcal{A}}}
\newcommand {\bmo}{\mathrm{bmo}}
\newcommand {\C}{{\mathbb C}}
\newcommand {\Ca}{\mathcal{C}}
\newcommand {\Da}{\mathcal{D}}
\newcommand {\ud}{\mathrm{d}}
\newcommand {\veps}{\varepsilon}
\newcommand {\F}{\mathcal{F}}
\newcommand {\HT}{\mathcal{H}}
\newcommand {\Hp}{\mathcal{H}^{p}_{FIO}(\Rn)}
\newcommand {\Hps}{\mathcal{H}^{s,p}_{FIO}(\Rn)}
\newcommand {\ind}{\mathbf{1}}
\newcommand {\J}{\mathcal{J}}
\newcommand {\la}{\lambda}
\newcommand {\rb}{\rangle}
\newcommand {\lb}{\langle}
\newcommand {\La}{\mathcal{L}}
\newcommand {\loc}{\mathrm{loc}}
\newcommand {\Ma}{\mathcal{M}}
\newcommand {\N}{{\mathbb N}}
\newcommand {\ph}{\varphi}
\newcommand {\R}{\mathbb R}
\newcommand {\Rn}{\mathbb{R}^{n}}
\newcommand {\supp}{\mathrm{supp}}
\newcommand {\Sp}{S^{*}\Rn}
\newcommand {\Spp}{S^{*}_{+}\Rn}
\newcommand {\St}{{\mathrm{St}}}
\newcommand {\Sw}{\mathcal{S}}
\newcommand {\w}{\omega}
\newcommand {\W}{\mathrm{W}}
\newcommand {\wh}{\widehat}
\newcommand {\wt}{\widetilde}
\newcommand {\Z}{\mathbb Z}
\newcommand {\vanish}[1]{\relax}
\DeclareFontFamily{U}{mathx}{\hyphenchar\font45}
\DeclareFontShape{U}{mathx}{m}{n}{
      <5> <6> <7> <8> <9> <10>
      <10.95> <12> <14.4> <17.28> <20.74> <24.88>
      mathx10
      }{}
\DeclareSymbolFont{mathx}{U}{mathx}{m}{n}
\DeclareMathAccent{\widecheck}{0}{mathx}{"71}
\DeclareMathOperator{\Real}{Re}
\newtheorem{theorem}{Theorem}[section]
\newtheorem{lemma}[theorem]{Lemma}
\newtheorem{proposition}[theorem]{Proposition}
\newtheorem{corollary}[theorem]{Corollary}
\theoremstyle{definition}
\newtheorem{definition}[theorem]{Definition}
\newtheorem{remark}[theorem]{Remark}
\numberwithin{equation}{section}
\protected\def\ignorethis#1\endignorethis{}
\let\endignorethis\relax
\title[The Hardy spaces $\Hp$ for FIOs for $p<1$]{The Hardy spaces $\Hp$ for Fourier integral operators for $p<1$}
\author{Naijia Liu}
\address{Department of Mathematics,
    Sun Yat-sen University,
    Guangzhou, 510275,
    P.R.~China}
\email{liunj@mail2.sysu.edu.cn}
\author{Jan Rozendaal}
\address{Institute of Mathematics, Polish Academy of Sciences\\
Ul.~\'{S}niadeckich 8\\
00-656 Warsaw\\
Poland}
\email{jrozendaal@impan.pl}
\author{Liang Song}
\address{Department of Mathematics,
    Sun Yat-sen University,
    Guangzhou, 510275,
    P.R.~China}
\email{songl@mail.sysu.edu.cn}
\keywords{Hardy spaces for Fourier integral operators, complex interpolation, molecular decomposition, equivalent characterizations}
\subjclass[2020]{Primary 42B35. Secondary 35S30, 42B30, 42B37}
\thanks{This research was funded in part by the National Science Center, Poland, grant 2021/43/D/ST1/00667. N.~ J. Liu is supported by China Postdoctoral Science Foundation (No. 2024M763732). J. Rozendaal is partially supported by NCN grant UMO-2023/49/B/ST1/01961. L. Song is  supported by  NNSF of China (No. 12471097).}
\begin{document}

\begin{abstract}
We introduce the Hardy spaces $\Hp$ for Fourier integral operators for $0<p<1$, thereby extending earlier constructions for $1\leq p\leq \infty$. We then establish various properties of these spaces, including their behavior under complex interpolation and duality, and their invariance under Fourier integral operators. We also obtain Sobolev embeddings, equivalent characterizations, and a molecular decomposition. These spaces are used in the companion article \cite{LiRoSo25a} to determine the sharp $\HT^{1}(\Rn)$ and $\bmo(\Rn)$ regularity of wave equations with rough coefficients.
\end{abstract}
	
\maketitle

\section{Introduction}\label{sec:intro}	

In this article, we extend the definition of the Hardy spaces for Fourier integral operators to the natural full range of exponents, and we derive foundational properties of the resulting spaces.

\subsection{Setting}

The Hardy space $\HT^{1}_{FIO}(\Rn)$ for Fourier integral operators was introduced by Smith in \cite{Smith98a}, and his construction was extended in \cite{HaPoRo20} to a scale of spaces $\Hp$, for $1\leq p\leq \infty$. These spaces can be viewed as versions of the classical Lebesgue and Hardy spaces adapted to Fourier integral operators (FIOs).

Indeed, while $L^{p}(\Rn)$ is invariant under singular integral and pseudodifferential operators for $1<p<\infty$, it is not invariant under general Fourier integral operators unless $p=2$. Instead, for each $s\in\R$, a Fourier integral operator of order zero maps the Sobolev space $W^{s,p}(\Rn):=(1-\Delta)^{-s/2}L^{p}(\Rn)$ to $W^{s-2s(p),p}(\Rn)$, where 
\[
s(p):=\frac{n-1}{2}\Big|\frac{1}{p}-\frac{1}{2}\Big|,
\]
and the exponent $2s(p)$ cannot be improved. The analogous mapping property holds for $p=1$ and $p=\infty$, upon replacing $L^{p}(\Rn)$ by the local Hardy space $\HT^{1}(\Rn)$ and by $\bmo(\Rn)$, respectively. This was first shown by Peral \cite{Peral80} and Miyachi \cite{Miyachi80a} in the case of the Euclidean wave propagators, and then by Seeger, Sogge and Stein \cite{SeSoSt91} for general compactly supported Fourier integral operators of order zero, associated with a local canonical graph. As a result, they determined the sharp $L^{p}$ regularity of wave equations on compact manifolds, the solutions to which are given by Fourier integral operators.

On the other hand, Fourier integral operators of order zero do map $\Hps:=(1-\Delta)^{-s/2}\Hp$ into itself, for all $1\leq p\leq \infty$ and $s\in\R$. Moreover, the Sobolev embeddings
\begin{equation}\label{eq:Sobolevintro}
W^{s+s(p),p}(\Rn)\subseteq \Hps\subseteq W^{s-s(p),p}(\Rn)
\end{equation}
hold if $1<p<\infty$, as do appropriate analogues involving $\HT^{1}(\Rn)$ and $\bmo(\Rn)$ for $p=1$ and $p=\infty$. In particular, \eqref{eq:Sobolevintro} allows one to recover the sharp $L^{p}(\Rn)$ mapping properties of Fourier integral operators mentioned above. However, since the exponents in \eqref{eq:Sobolevintro} are sharp, one in fact obtains stronger regularity properties for Fourier integral operators when working with $\Hp$ than with $L^{p}(\Rn)$. 

The Hardy spaces for Fourier integral operators have other convenient properties, and their behavior under complex interpolation and duality mirrors that of the Lebesgue spaces. Moreover, just as the classical Hardy spaces, they can be characterized in various ways, using e.g.~maximal functions and square functions, and $\HT^{1}_{FIO}(\Rn)$ has suitable molecular and atomic decompositions. In recent years, these spaces have been applied to rough wave equations \cite{Hassell-Rozendaal23}, local smoothing  \cite{Rozendaal22b,LiRoSoYa24}, nonlinear wave equations \cite{Rozendaal-Schippa23}, and spherical maximal functions \cite{GhLiRoSo24}.

\subsection{Motivation for the present work}

In \cite{Hassell-Rozendaal23}, the sharp $L^{p}(\Rn)$ regularity of wave equations with rough coefficients was determined, thereby extending the results in \cite{SeSoSt91} for smooth coefficients. The Hardy spaces for Fourier integral operators play a crucial role in this extension, combined with techniques that were introduced in \cite{Smith98b} to treat rough wave equations on $L^{2}(\Rn)$.

More precisely, since the solutions to rough wave equations are not given by Fourier integral operators in the classical sense, to study them one cannot directly rely on standard tools from microlocal analysis. Instead, \cite{Smith98b} first constructed a microlocal parametrix for the solution to the rough equation, and then used an iterative procedure to get rid of the error term. Now, just as is the case for Fourier integral operators, this parametrix and its error terms do not leave $L^{p}(\Rn)$ invariant unless $p=2$, and therefore the iterative procedure appears to break down on $L^{p}(\Rn)$ for $p\neq 2$. To deal with this issue, \cite{Hassell-Rozendaal23} instead constructed the parametrix on $\Hp$, after which one can use the iterative procedure to remove error terms and construct a bona fide solution on $\Hp$. Finally, the embeddings in \eqref{eq:Sobolevintro} can then be used to obtain the sharp $L^{p}(\Rn)$ mapping properties of the solutions to rough wave equations.

However, \cite{Hassell-Rozendaal23} only concerns $\Hp$ and $L^{p}(\Rn)$ for $1<p<\infty$, and in particular it does not deal with the regularity properties of rough wave equations on $\HT^{1}(\Rn)$ and $\bmo(\Rn)$. This limitation arises from another aspect of the techniques which were introduced in \cite{Smith98b}. Namely, to show that the error bounds for the solution to a rough wave equation improve when successively approximating with the microlocal parametrix, one has to prove mapping properties for pseudodifferential operators with rough coefficients. Such estimates are classical on $L^{2}(\Rn)$, and even on $L^{p}(\Rn)$ for $p\neq 2$, but for the setup in \cite{Hassell-Rozendaal23} bounds on $\Hp$ were required. 

In \cite{Rozendaal23a,Rozendaal22}, bounds for rough pseudodifferential operators on $\Hp$ were first obtained, and the results in \cite{Hassell-Rozendaal23} rely crucially on those bounds. Unfortunately, the techniques in \cite{Rozendaal23a,Rozendaal22} appear to break down at the endpoints $p=1$ and $p=\infty$. 

Recently, it was realized that one can nonetheless rely on the setup in \cite{Hassell-Rozendaal23} to determine the sharp regularity of rough wave equations on $\HT^{1}(\Rn)$ and $\bmo(\Rn)$, not by directly extending the techniques in \cite{Rozendaal23a,Rozendaal22} to the endpoints but using a more indirect approach. Namely, one can first extend the definition of $\Hp$ from $1\leq p\leq\infty$ to all $0<p\leq \infty$. Then one can prove estimates for rough pseudodifferential operators on $\Hp$ for $0<p\leq 1$ that are significantly weaker than those obtained in \cite{Rozendaal23a,Rozendaal22} for $1<p<\infty$. However, an interpolation procedure for rough symbols from \cite{Rozendaal22} can be used to combine the weaker estimates as $p\downarrow 0$ with the stronger estimates as $p\downarrow 1$, yielding the expected outcome at $p=1$. And duality then deals with the other endpoint, $p=\infty$. 

The present article constitutes one half of this approach. Namely, we extend the definition of $\Hp$ from $1\leq p\leq\infty$ to all $0<p\leq \infty$, and determine the main properties of the resulting spaces. These properties are used in the companion paper \cite{LiRoSo25a} to prove new bounds for rough pseudodifferential operators and determine the sharp $\HT^{1}(\Rn)$ and $\bmo(\Rn)$ regularity of wave equations with rough coefficients.

\subsection{Main results and techniques}

We introduce $\Hp$ for $0<p<1$ using wave packet transforms and tent spaces $T^{p}(\Sp)$ over the cosphere bundle $\Sp=\Rn\times S^{n-1}$, as in \cite{HaPoRo20}. This allows us to conveniently deduce various properties of $\Hp$ from those of $T^{p}(\Sp)$, and it links the spaces to the parametrix 
in \cite{Hassell-Rozendaal23}. In fact, one can connect the Sobolev spaces $\Hps$ over $\Hp$ to weighted tent spaces $T^{p}_{s}(\Sp)$ in the same manner. Since $\Sp$ is a doubling metric measure space when endowed with a suitable metric, basic properties of $T^{p}_{s}(\Sp)$ are available in the literature. In this manner we can show that $\Hps$ is a quasi-Banach space of tempered distributions in which the Schwartz functions are dense (see Section \ref{subsec:defHpFIO}), and that it has a suitable molecular decomposition, cf.~Theorem \ref{thm:moldecomp}. 

For all $0<p\leq \infty$ and $s\in\R$, the space $\Hps$ is invariant under suitable 
Fourier integral operators, by Theorem \ref{thm:FIObdd}. In fact, as in \cite{HaPoRo20}, for Fourier integral operators in a suitable standard form, one can remove the typical assumption that the Schwartz kernel of the operator is compactly supported. Moreover, we work with a larger class of symbols than in \cite{SeSoSt91}. 

The Sobolev embeddings in \eqref{eq:Sobolevintro} extend to $p<1$ upon replacing $W^{s,p}(\Rn)$ by a Sobolev space over the local Hardy space $\HT^{p}(\Rn)$, as is shown in Theorem \ref{thm:Sobolev}. 

Proposition \ref{prop:HpFIOdual} describes the dual of $\Hps$, also for $0<p<1$, and in Theorem \ref{thm:equivchar} we give an equivalent characterization of $\Hps$ using parabolic frequency localizations, analogous to the one in \cite{FaLiRoSo23} for $p=1$. This characterization is simpler, and more useful for the proof of the initial estimates for rough pseudodifferential operators in \cite{LiRoSo25a}. 

On the other hand, new difficulties arise for $p<1$. For example, as indicated above, it is crucial for the approach to rough wave equations on $\HT^{1}(\Rn)$ and $\bmo(\Rn)$ in \cite{LiRoSo25a} that the Hardy spaces for Fourier integral operators form a complex interpolation scale, as this allows one to interpolate analytic families of operators. However, the theory of complex interpolation does not extend directly from Banach spaces to quasi-Banach spaces. As such, for the proof of Proposition \ref{prop:HpFIOint}, which says that our spaces indeed form a complex interpolation scale, we have to establish certain geometric properties of the quasi-Banach spaces $\Hps$ and $T^{p}_{s}(\Sp)$. We have placed much of this material in Appendix \ref{sec:inter}.

There are also difficulties, both technical and fundamental, concerning low-frequency contributions. These clearly have a smoothing effect, but boundedness on the relevant spaces can still be problematic, due to the lack of $L^{p}(\Rn)$ integrability for small $p$. For example, the results on $\HT^{p}(\Rn)$ regularity in \cite{LiRoSo25a} cannot hold for general $0<p<1$ if one replaces the wave equation $\partial_{t}^{2}u(t)=Lu(t)$ by the associated half-wave equation $\partial_{t}u(t)=i\sqrt{-L}u(t)$, whenever the latter is well defined. Indeed, even in the case of the flat Laplacian in dimension $n=1$, due to the singularity at zero of $\xi\mapsto e^{i|\xi|}$, 
the operators $e^{it\sqrt{-\Delta}}$ do not have the same mapping properties as their wave counterparts 
(see Remark \ref{rem:Euclidwave}). 
Nonetheless, suitable modifications of the arguments from earlier work enable us to also prove the bounds we need for the low-frequency contributions.

\subsection{Organization of this article}

In Section \ref{sec:preliminaries} we collect preliminaries for the rest of the article. That is, we discuss the relevant homogeneous structure on the cosphere bundle and the associated tent spaces, and we provide some background on Fourier integral operators. We also introduce the wave packet transforms that we will use, and we include kernel estimates which guarantee that integral operators are bounded on tent spaces. 

In Section \ref{sec:HpFIO} we then extend the definition of $\Hp$ to all $0<p\leq \infty$, and we prove the fundamental properties of these spaces mentioned above. 

 Finally, Appendix \ref{sec:inter} contains some material on interpolation of quasi-Banach spaces. 

\subsection{Notation and terminology}\label{subsec:notation}

The natural numbers are $\N=\{1,2,\ldots\}$, and $\Z_{+}:=\N\cup\{0\}$. Throughout, we fix an $n\in\N$ with $n\geq 2$. Our construction also applies for $n=1$, but in this case one has $\HT^{p}_{FIO}(\R)=\HT^{p}(\R)$ (see Theorem \ref{thm:Sobolev}) and the results are classical. 

For $\xi\in\Rn$ we write $\lb\xi\rb=(1+|\xi|^{2})^{1/2}$, and $\hat{\xi}=\xi/|\xi|$ if $\xi\neq0$. We use multi-index notation, where $\partial_{\xi}=(\partial_{\xi_{1}},\ldots,\partial_{\xi_{n}})$ and $\partial^{\alpha}_{\xi}=\partial^{\alpha_{1}}_{\xi_{1}}\ldots\partial^{\alpha_{n}}_{\xi_{n}}$
for $\xi=(\xi_{1},\ldots,\xi_{n})\in\Rn$ and $\alpha=(\alpha_{1},\ldots,\alpha_{n})\in\Z_{+}^{n}$. Moreover, 
$\partial_{x\eta}^{2}\Phi$ is the mixed Hessian of a function $\Phi$ of the variables $x$ and $\eta$.

The bilinear duality between a Schwartz function $g\in\Sw(\Rn)$ and a tempered distribution $f\in\Sw'(\Rn)$ is denoted by $\lb f,g\rb_{\Rn}$. 
The Fourier transform of $f$ is denoted by $\F f$ or $\widehat{f}$, and the Fourier multiplier with symbol $\ph$ is denoted by $\ph(D)$. 

The measure of a measurable subset $B$ of a measure space $\Omega$ will be denoted by $|B|$, and its indicator function by $\ind_{B}$.  
For an integrable $F:B\to\C$, we write
\[
\fint_{B}F(x)\ud x=\frac{1}{|B|}\int_{B}F(x)\ud x
\]
if $|B|<\infty$. The H\"{o}lder conjugate of $p\in[1,\infty]$ is denoted by $p'$. 

The quasi-Banach space of continuous linear operators between quasi-Banach spaces $X$ and $Y$ is $\La(X,Y)$, 
and $\La(X):=\La(X,X)$. 

We write $f(s)\lesssim g(s)$ to indicate that $f(s)\leq C g(s)$ for all $s$ and a constant $C \geq0$ independent of $s$, and similarly for $f(s)\gtrsim g(s)$ and $g(s)\eqsim f(s)$.

\section{Preliminaries}\label{sec:preliminaries}
	
In this section we collect various preliminary definitions and results. 

\subsection{A metric on the cosphere bundle}\label{subsec:metric}

Here we introduce the metric measure space that will be used to define the Hardy spaces for Fourier integral operators. The metric itself arises from contact geometry, but for this article we only need a few basic facts about it. 
For more on the material presented here, see \cite[Section 2.1]{HaPoRo20}.

We identify the cotangent bundle $T^{*}\Rn$ of $\Rn$ with $\Rn\times\Rn$,  and we write 
\[
o:=\Rn\times\{0\}\subseteq T^{*}\Rn
\]
for the zero section. 
Elements of the sphere $S^{n-1}$ in $\Rn$ are denoted by $\w$ or $\nu$, and $S^{n-1}$ is endowed with the unit normalized measure $\ud\w$ and the standard Riemannian metric $g_{S^{n-1}}$. 
Let $\Sp:=\Rn\times S^{n-1}$ be the cosphere bundle of $\mathbb{R}^{n}$, endowed with the measure $\ud x\ud\w$ and the product metric $dx^{2}+g_{S^{n-1}}$. The $1$-form $\alpha_{S^{n-1}}:=\w\cdot dx$ on $\Sp$ determines a contact structure on $\Sp$, the smooth distribution of codimension $1$ hypersurfaces of $T(\Sp)$ given by the kernel of $\alpha_{S^{n-1}}$. 
This contact form in turn gives rise to the following sub-Riemannian metric $d$ on $\Sp$:
\[
d((x,\omega),(y,v)):={\rm \inf\limits_{\gamma}}\int_{0}^{1}|\gamma^{\prime}(s)|\ud s,
\]
for $(x,\omega),(y,\nu)\in \Sp$. Here the infimum is taken over all piecewise Lipschitz\footnote{In \cite{HaPoRo20} the infimum was taken over all piecewise $C^{1}$ curves. This makes no difference for the proofs in \cite{HaPoRo20}, and the current assumption is more convenient for some arguments, cf.~\cite{Hassell-Rozendaal23}.} curves $\gamma:[0,1]\rightarrow \Sp$ such that $\gamma(0)=(x,\omega)$, $\gamma(1)=(y,\nu)$ and $\alpha_{S^{n-1}}(\gamma^{\prime}(s))=0$ for almost all $s\in[0,1]$. Moreover, $|\gamma^{\prime}(s)|$ is the length of the vector $\gamma^{\prime}(s)$ with respect to $dx^{2}+dg_{S^{n-1}}$.

By \cite[Lemma 2.1]{HaPoRo20}, 
\begin{equation}\label{eq:equivmetric}
d((x,\omega),(y,\nu))\eqsim \big{(}|\omega\cdot(x-y)|+|x-y|^{2}+|\omega-\nu|^{2}\big{)}^{1/2}
\end{equation}
for implicit constants independent of $(x,\w),(y,\nu)\in\Sp$, and we will almost exclusively work with this concrete equivalent expression for the metric.

\begin{remark}\label{rem:constantmetric}
In fact, it follows from the proof of \cite[Lemma 2.1]{HaPoRo20} that 
\[
\sqrt{\tfrac{2}{3}}\big{(}|\langle\omega,x-y\rangle|+|x-y|^{2}+|\omega-\nu|^{2}\big{)}^{1/2}\leq d((x,\omega),(y,\nu))
\]
for all $(x,\w),(y,\nu)\in\Spp$. 
\end{remark}

We write $B_{\tau}(x,\w)$ for the open ball around $(x,\w)\in\Sp$ of radius $\tau>0$ with respect to the metric $d$. The following statement is \cite[Lemma 2.3]{HaPoRo20}.

\begin{lemma}\label{lem:doubling}
There exists a $C>0$ such that, for all $(x,\omega)\in \Sp$, one has
\[
\frac{1}{C}\tau^{2n}\leq |B_{\tau}(x,\omega)|\leq C\tau^{2n}
\]
for $0<\tau<1$, and
\[
\frac{1}{C}\tau^{n}\leq |B_{\tau}(x,\omega)|\leq C\tau^{n}
\]
for $\tau\geq 1$. In particular,
\[
|B_{\lambda \tau}(x,\omega)|\leq C^{2}\lambda^{2n}|B_{\tau}(x,\omega)|
\]
for all $\tau>0$ and $\lambda \geq 1$, and $(\Sp,d,\ud x\ud\omega)$ is a doubling metric measure space.
\end{lemma}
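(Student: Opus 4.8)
The plan is to estimate the volume $|B_\tau(x,\omega)|$ directly from the concrete expression \eqref{eq:equivmetric} for the metric, and then read off the doubling property as a consequence. Since the implicit constants in \eqref{eq:equivmetric} are uniform in $(x,\omega),(y,\nu)\in\Sp$, it suffices to estimate the Lebesgue measure (with respect to $\ud y\,\ud\nu$) of the set
\[
\big\{(y,\nu)\in\Sp : |\omega\cdot(x-y)|+|x-y|^{2}+|\omega-\nu|^{2}<c\tau^{2}\big\}
\]
for a fixed small constant $c>0$, uniformly in $(x,\omega)$; the two-sided bound in \eqref{eq:equivmetric} shows this is comparable to $|B_\tau(x,\omega)|$ up to a change of the constant $c$ and the doubling constant.

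First I would translate so that $x=0$, and choose coordinates on $\Rn$ adapted to $\omega$, writing $y=(y_1,y')$ with $y_1=\omega\cdot y$ the component along $\omega$ and $y'$ in the orthogonal hyperplane. The defining inequality then forces $|y_1|\lesssim \tau^2$ (when $\tau<1$; when $\tau\geq 1$ it only forces $|y_1|\lesssim \tau^2$ as well, but this will be subsumed by the $|y|^2$ constraint), $|y'|\lesssim\tau$, and $|\omega-\nu|\lesssim\tau$. For the sphere variable, the ball $\{\nu\in S^{n-1}:|\omega-\nu|<c\tau\}$ has $\ud\nu$-measure comparable to $\min(\tau,1)^{n-1}$ by the standard estimate for geodesic balls on $S^{n-1}$ (saturating once $\tau\gtrsim 1$). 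Multiplying the contributions: for $0<\tau<1$ one gets a box of dimensions roughly $\tau^2$ (in $y_1$) times $\tau^{n-1}$ (in $y'$) times $\tau^{n-1}$ (in $\nu$), giving volume $\eqsim\tau^{2}\cdot\tau^{n-1}\cdot\tau^{n-1}=\tau^{2n}$. For $\tau\geq 1$, the sphere factor saturates at $\eqsim 1$ and the $y$-constraint reduces to $|y|^2\lesssim \tau^2$, i.e. $|y|\lesssim\tau$, an $n$-dimensional Euclidean ball of volume $\eqsim\tau^n$; so the total is $\eqsim\tau^{n}$. This gives the two displayed two-sided bounds.

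The doubling estimate then follows: for $\lambda\geq 1$ and $\tau>0$, I would split into cases according to whether $\lambda\tau$ and $\tau$ lie below or above $1$. If both are $<1$, then $|B_{\lambda\tau}|\eqsim(\lambda\tau)^{2n}=\lambda^{2n}\tau^{2n}\eqsim\lambda^{2n}|B_\tau|$. If both are $\geq 1$, then $|B_{\lambda\tau}|\eqsim(\lambda\tau)^{n}=\lambda^{n}\tau^{n}\leq\lambda^{2n}|B_\tau|$. In the mixed case $\tau<1\leq\lambda\tau$, one has $|B_{\lambda\tau}|\eqsim(\lambda\tau)^{n}$ and $|B_\tau|\eqsim\tau^{2n}$, so $|B_{\lambda\tau}|/|B_\tau|\eqsim\lambda^{n}\tau^{-n}\leq\lambda^{n}(\lambda\tau)^{-n}\cdot\lambda^{n}=\lambda^{n}\cdot(\lambda\tau)^{-n}\cdot\lambda^n$; more simply, $\tau^{-n}\leq(\lambda\tau)^{n}\tau^{-2n}\leq \lambda^n \tau^{-2n}\cdot\tau^n$ — in any event $\lambda^n\tau^{-n}\leq\lambda^{2n}$ since $\tau^{-n}\leq\lambda^{n}$ from $\lambda\tau\geq1$. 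Absorbing the comparability constants from the two-sided volume bounds into a single constant $C$ yields $|B_{\lambda\tau}(x,\omega)|\leq C^2\lambda^{2n}|B_\tau(x,\omega)|$, and doubling of $(\Sp,d,\ud x\,\ud\omega)$ is immediate by taking $\lambda=2$.

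The main obstacle, such as it is, is purely bookkeeping: handling the crossover at scale $\tau\eqsim 1$ carefully, and correctly estimating the $\ud\nu$-measure of a geodesic ball on $S^{n-1}$ across the full range of radii (it behaves like $\tau^{n-1}$ for $\tau\lesssim 1$ but is bounded by the total measure $1$ for $\tau\gtrsim 1$). There is no real analytic difficulty — the anisotropic scaling $(\text{along }\omega)\sim\tau^2$, $(\text{transverse})\sim\tau$ is exactly what produces the exponent $2n$ at small scales and $n$ at large scales — so the proof is essentially a direct computation from \eqref{eq:equivmetric}, which is in fact how it is carried out in \cite[Lemma 2.3]{HaPoRo20}.
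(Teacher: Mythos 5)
Your proof is correct and is essentially the same direct computation that underlies the cited reference: the paper itself does not reprove the lemma but simply quotes \cite[Lemma 2.3]{HaPoRo20}, whose argument is the anisotropic box estimate you carry out (coordinates along/transverse to $\omega$, the $\tau^2\times\tau^{n-1}\times\tau^{n-1}$ scaling at small scales, saturation of the sphere factor at scale $1$, and the crossover casework for doubling). Your explicit handling of the mixed case $\tau<1\leq\lambda\tau$ and the observation $\tau^{-n}\leq\lambda^n$ there is exactly the point that produces the uniform constant $C^2\lambda^{2n}$.
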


The volume $|B_{\tau}(x,\w)|$ of 
$B_{\tau}(x,\w)$ only depends on $\tau>0$ and not on the choice of $(x,\w)\in\Sp$, as follows by translation and rotation invariance of the metric $d$.

Since $(\Sp,d,\ud x\ud\w)$ is a doubling metric measure space, it follows from e.g.~\cite[Theorem 1.5]{Tozoni04}, and from the boundedness of the scalar-valued Hardy--Littlewood maximal operator, that the (centered) {vector-valued} Hardy--Littlewood maximal operator $\Ma$ is bounded on $L^{p}(\Sp;L^{q}(0,\infty))$ for all $p,q\in(1,\infty)$, where $(0,\infty)$ is endowed with the Haar measure $\frac{\ud\sigma}{\sigma}$. We will apply this to the maximal operator $\Ma_{\la}$ of index $\lambda>0$, given by 
\begin{equation}\label{eq:maxHL}
\mathcal{M}_{\lambda}g(x,\omega):=\big(\Ma(|g|^\lambda)(x,\omega)\big)^{1/{\lambda}}
\end{equation}
for $g\in L^{\lambda}_{\loc}(\Sp)$ and $(x,\w)\in\Sp$. 

Finally, for later use, we recall a special case of a classical inequality from \cite[Sections II.2.1 and II.5.14]{Stein93} (see also \cite[page 195]{FaLiRoSo23}). For every $M>n$, there exists a $C_{M}\geq0$ such that 
\begin{align}\label{eq:HLcontrol}
{\sigma}^{-n}\int_{\Sp}(1+{\sigma}^{-1}d((x,\omega),(y,\nu))^{2})^{-M}
|g(y,\nu)|\ud y\ud\nu
\leq C_{M}\Ma(g)(x,\omega)
\end{align}
for all $g\in L^{1}_{\loc}(\Sp)$, $(x,\w)\in\Sp$ and $\sigma>0$.

\subsection{Tent spaces}\label{subsec:tent}

In this subsection we collect some background on the tent spaces $T^{p}(\Sp)$ from \cite{HaPoRo20}, and their weighted cousins from \cite{Hassell-Rozendaal23}. Whereas in previous work these spaces were only used for $1\leq p\leq\infty$, in this article we need to consider all $0<p\leq \infty$. By Lemma \ref{lem:doubling} and with a bit of extra effort, we can nonetheless rely on the theory of tent spaces on doubling metric measure spaces as developed in e.g.~\cite{CoMeSt85, Amenta14,Amenta18}, after a straightforward change of variables to incorporate the parabolic scaling we use (see also \cite{AuKrMoPo12}).

Throughout, let 
\[
\Spp:=\Sp\times(0,\infty),
\]
endowed with the measure $\ud x\ud\w\frac{\ud\sigma}{\sigma}$. For $U\subseteq \Sp$ open, set
\[
T(U):=\{(x,\w,\sigma)\in\Spp\mid d((x,\w),U^{c})\geq \sqrt{\sigma}\}.
\]
For $F:\Spp\to \C$ measurable, $s\in\R$ and $(x,\w)\in\Sp$, write
\begin{equation}\label{eq:As}
\A_{s} F(x,\w):=\Big(\int_{0}^{\infty}\fint_{B_{\sqrt{\sigma}}(x,\w)}|F(y,\nu,\sigma)|^{2}\ud y\ud \nu\frac{\ud \sigma}{\sigma^{1+2s}}\Big)^{1/2}
\end{equation}
and, for $\alpha\in\R$,
\begin{equation}\label{eq:Cs}
\mathcal{C}_{s,\alpha}F(x,\w):=\sup_{B}\Big(\frac{1}{|B|^{1+2\alpha}}\int_{T(B)}|F(y,\nu,\sigma)|^{2}\ud y\ud \nu\frac{\ud \sigma}{\sigma^{1+2s}}\Big)^{1/2},
\end{equation}
where the supremum is taken over all balls $B\subseteq \Sp$ containing $(x,\w)$.  Also set $\Ca_{s}:=\Ca_{s,0}$.

\begin{definition}\label{def:tentspace}
For $0<p<\infty$ and $s\in\R$, the \emph{tent space} $T^{p}_{s}(\Sp)$ consists of all measurable $F:\Spp\to\C$ such that $\A_{s} F\in L^{p}(\Sp)$, endowed with the quasi-norm
\[
\|F\|_{T^{p}_{s}(\Sp)}:=\|\A_{s} F\|_{L^{p}(\Sp)}.
\]
Also, for $\alpha\in\R$, the space $T^{\infty}_{s,\alpha}(\Sp)$ consists of those measurable $F:\Spp\to\C$ such that $\mathcal{C}_{s,\alpha}F\in L^{\infty}(\Sp)$, with 
\[
\|F\|_{T^{\infty}_{s,\alpha}(\Sp)}:=\|\mathcal{C}_{s,\alpha}F\|_{L^{\infty}(\Sp)}.
\]
We write $T^{p}(\Sp):=T^{p}_{0}(\Sp)$ for $0<p<\infty$. Moreover, $T^{\infty}_{s}(\Sp):=T^{\infty}_{s,0}(\Sp)$ and $T^{\infty}(\Sp):=T^{\infty}_{0}(\Sp)$. 
\end{definition}

\begin{remark}\label{rem:tentweight}
In \cite{Amenta18}, weighted tent spaces were considered on doubling metric measure spaces, with a weight which, in our setting, would correspond to replacing $\sigma^{2s}$ in \eqref{eq:As} and \eqref{eq:Cs} by a suitable power of $|B_{\sqrt{\sigma}}(x,\w)|$. We have chosen the present definition because it is more convenient when considering the Sobolev spaces $\Hps$ over $\Hp$ (see Proposition \ref{prop:HpFIOtent}). On the other hand, due to Lemma \ref{lem:doubling}, there is no real difference between these weights whenever $F(\cdot,\cdot,\sigma)=0$ for $\sigma>e$, which is the case that will be considered later on.
\end{remark}

For all $0<p\leq \infty$ and $s\in\R$, the weighted tent space $T^{p}_{s}(\Sp)$ is a quasi-Banach space, and it is a Banach space if $p\geq 1$. Similarly, $T^{\infty}_{s,\alpha}(\Sp)$ is a Banach space for all $\alpha\in\R$. Moreover, $T^{2}(\Sp)=L^{2}(\Spp)$ isometrically, and $T^{p}(\Sp)\cap T^{2}(\Sp)$ is dense in $T^{p}(\Sp)$ for all $p\in(0,\infty)$. These assertions are a consequence of \cite[Proposition 1.4]{Amenta18} when $s=0$. Then the statement for general $s\in\R$ follows, by noting that the map sending $F\in T^{p}(\Sp)$ to
\[
(x,\w,\sigma)\to \sigma^{s}F(x,\w,\sigma)
\]
is an isometric isomorphism between $T^{p}(\Sp)$ and $T^{p}_{s}(\Sp)$, and also between $T^{\infty}_{0,\alpha}(\Sp)$ and $T^{\infty}_{s,\alpha}(\Sp)$.

In Section \ref{subsec:charac} we will want to compare the conical square function in \eqref{eq:As} to its vertical analogue. For all $0<p\leq 2$, by \cite[Proposition 2.1 and Remark 2.2]{AuHoMa12},
\begin{equation}\label{eq:vertical}
\Big(\int_{\Sp}\Big(\int_{0}^{\infty}|F(x,\w,\sigma)|^{2}\frac{\ud\sigma}{\sigma}\Big)^{p/2}\ud x\ud\w\Big)^{1/p}\lesssim \|F\|_{T^{p}(\Sp)},
\end{equation}
for an implicit constant independent of $F\in T^{p}(\Sp)$. The reverse inequality does not hold for $p<2$. On the other hand, the reverse inequality does hold for $2\leq p<\infty$, while \eqref{eq:vertical} itself fails for $p>2$. 




We include a proposition on interpolation of weighted tent spaces that will implicitly play an important role in this article. For the range of parameters where the spaces involved are Banach spaces, the statement is contained in \cite{Amenta18}, and in the Euclidean setting it can be found for almost all the parameters in \cite{HoMaMc11}. Taking into account the subtleties of complex interpolation of quasi-Banach spaces, in Appendix \ref{sec:inter} we indicate how the proof extends to our setting.

\begin{proposition}\label{prop:tentint}
Let $p_{0},p_{1}\in(0,\infty]$ be such that $(p_{0},p_{1})\neq (\infty,\infty)$, and let $p\in(0,\infty)$, $s_{0},s_{1},s\in\R$ and $\theta\in(0,1)$ be such that $\frac{1}{p}=\frac{1-\theta}{p_{0}}+\frac{\theta}{p_{1}}$ and $s=(1-\theta)s_{0}+\theta s_{1}$. Then
\[
[T^{p_{0}}_{s_{0}}(\Sp),T^{p_{1}}_{s_{1}}(\Sp)]_{\theta}=T^{p}_{s}(\Sp),
\]
with equivalent quasi-norms.
\end{proposition}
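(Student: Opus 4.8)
The plan is to reduce the quasi-Banach interpolation statement to the Banach case via the isometric rescaling $F\mapsto \sigma^{s}F$ and the factorization of tent spaces, then treat the genuinely quasi-Banach endpoints (small $p$, and the $p=\infty$ case with $s\ne 0$) by invoking the general machinery from Appendix \ref{sec:inter}. First I would observe that, by the isometric isomorphism $F\mapsto(\cdot,\cdot,\sigma\mapsto\sigma^{s_j}F)$ noted after Definition \ref{def:tentspace}, it suffices to prove the case $s_0=s_1=s=0$: indeed, multiplication by $\sigma^{s}$ intertwines the whole scale, and since this map is an isometry simultaneously for all exponents, it commutes with complex interpolation of the couple. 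So the target becomes $[T^{p_0}(\Sp),T^{p_1}(\Sp)]_\theta=T^{p}(\Sp)$ with $\frac1p=\frac{1-\theta}{p_0}+\frac{\theta}{p_1}$, $(p_0,p_1)\ne(\infty,\infty)$.

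Next I would split into cases according to whether the couple lies in the Banach range. If $1\le p_0,p_1\le\infty$ (hence $p\ge 1$), the result is exactly \cite[Proposition 1.4 / Theorem ...]{Amenta18} transported through Lemma \ref{lem:doubling} (which guarantees $(\Sp,d,\ud x\ud\w)$ is a doubling metric measure space, so Amenta's hypotheses are met) together with the parabolic change of variables $\sigma\mapsto\sqrt\sigma$ converting our $B_{\sqrt\sigma}$-averages into the standard $B_\sigma$-averages of the cited references; one must also note that our normalization $\frac{\ud\sigma}{\sigma}$ matches theirs. For the quasi-Banach range — at least one of $p_0,p_1$ below $1$ — I would follow the now-standard approach (as in \cite{HoMaMc11} and the discussion in Appendix \ref{sec:inter}): tent spaces admit an atomic/factorization structure, namely $T^{p}(\Sp)$ for $p<1$ can be written via $T^{p}(\Sp)=T^{1}(\Sp)\cdot L^{?}$-type Calderón products, or more directly one uses that the Calderón products $(T^{p_0})^{1-\theta}(T^{p_1})^{\theta}=T^{p}$ hold by a pointwise Hölder argument on the $\A_0$-functionals, and then one imports the identification of the complex method with the Calderón product for couples of quasi-Banach lattices that are analytically convex / satisfy a lower estimate. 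This is precisely where Appendix \ref{sec:inter} enters: I would cite from there that $T^{p}_{s}(\Sp)$ is analytically convex (equivalently, the Gustavsson--Peetre / Kalton condition holds), which is what makes $[\cdot,\cdot]_\theta$ well-behaved and equal to the Calderón product.

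Concretely the key steps, in order, are: (1) rescale away $s$; (2) in the Banach range, quote Amenta after the parabolic change of variables; (3) establish the Calderón-product identity $T^{p}(\Sp)=(T^{p_0}(\Sp))^{1-\theta}(T^{p_1}(\Sp))^{\theta}$ by a direct inequality between the conical square functionals $\A_0$, using that $F=G^{1-\theta}H^{\theta}$ pointwise forces $\A_0 F\lesssim (\A_0 G)^{1-\theta}(\A_0 H)^{\theta}$ after a Hölder/Cauchy--Schwarz estimate in the $(\,y,\nu,\sigma)$ integration (and conversely split any $F\in T^p$ by $G=|F|^{p/p_0}$-type choices); (4) invoke the abstract theorem from Appendix \ref{sec:inter} identifying $[X_0,X_1]_\theta$ with $X_0^{1-\theta}X_1^{\theta}$ for analytically convex quasi-Banach couples, having verified analytic convexity of $T^{p}_{s}(\Sp)$ there; (5) handle the case where exactly one exponent is $\infty$ (so the other is finite, since $(\infty,\infty)$ is excluded) — here $T^\infty(\Sp)$ must be treated through its Carleson-measure characterization, and one uses the duality $(T^{p'})^*=T^\infty$ together with either Wolff-type reiteration or the direct Calderón-product computation, which still goes through because the finite exponent keeps the couple "non-degenerate".

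The main obstacle is step (4) combined with the $p=\infty$ endpoint: complex interpolation of quasi-Banach spaces is genuinely subtle — the Calderón product need not coincide with $[\cdot,\cdot]_\theta$ without an analytic-convexity (or "$A$-convexity") hypothesis, and verifying this for the conical-square-function tent spaces at small $p$ is not automatic. I expect to spend the real effort showing $T^{p}_{s}(\Sp)$ satisfies the requisite lower $\ell^q$-estimate on finite sums so that Kalton's theory applies, and in reconciling the Carleson-functional $\Ca_{0}$ (which is not a lattice norm in an obvious way) with the product structure when $p_1=\infty$; this is exactly the material deferred to Appendix \ref{sec:inter}, and the proof here will consist of assembling those ingredients with the rescaling and the pointwise Hölder inequality for $\A_0$.
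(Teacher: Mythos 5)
Your overall strategy---identify $[\cdot,\cdot]_\theta$ with the Calder\'{o}n product via analytic convexity, then compute the product---is the same as the paper's, and the appeal to Appendix \ref{sec:inter} for the abstract machinery is on target. But two of the concrete steps would fail. First, the reduction to $s_0=s_1=0$ by multiplying by $\sigma^s$ is invalid: the map $F\mapsto\sigma^s F$ takes $T^{p_0}_{r}(\Sp)\to T^{p_0}_{r+s}(\Sp)$ and $T^{p_1}_{r}(\Sp)\to T^{p_1}_{r+s}(\Sp)$ isometrically, so a \emph{single} such multiplication can only shift both endpoint weights by the same amount. It reduces the middle weight to $s=0$, but when $s_0\neq s_1$ it cannot make both endpoint weights vanish. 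The paper therefore carries general $s_0,s_1$ directly through the Calder\'{o}n-product computation rather than attempting any such reduction.

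Second, and more seriously, the sketched proof of the hard inclusion $T^p(\Sp)\subseteq(T^{p_0}(\Sp))^{1-\theta}(T^{p_1}(\Sp))^{\theta}$ does not work. The choice $G=|F|^{p/p_0}$, $H=|F|^{p/p_1}$ gives $|F|=|G|^{1-\theta}|H|^{\theta}$ pointwise, but $G\in T^{p_0}(\Sp)$ would require controlling $\int_{\Sp}\bigl(\int_0^\infty\fint|F|^{2p/p_0}\bigr)^{p_0/2}$ by $\|F\|_{T^p(\Sp)}$, which fails: the inner exponent in the $L^2(\ud y\,\ud\nu\,\frac{\ud\sigma}{\sigma})$ integral changes, and the conical measure is infinite so no Jensen-type inequality is available. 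This is exactly why the paper introduces the scale $T^{t,q}_r(\Sp)$ with variable inner exponent $q$ and invokes the factorization theorem of Cohn--Verbitsky: the $(1-\theta)$th and $\theta$th lattice powers of $T^{p_0}_{s_0}(\Sp)$ and $T^{p_1}_{s_1}(\Sp)$ are $T^{p_0/(1-\theta),\,2/(1-\theta)}_{s_0(1-\theta)}(\Sp)$ and $T^{p_1/\theta,\,2/\theta}_{s_1\theta}(\Sp)$, with inner exponents $\neq 2$, and it is only the Cohn--Verbitsky product of these two that brings $q$ back to $2$. Your H\"{o}lder argument correctly handles the easy inclusion, but the converse is the substantive content and cannot be improvised. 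Finally, a minor point: your worry that $\Ca_0$ is ``not a lattice norm in an obvious way'' is unfounded---$T^\infty_r(\Sp)$ is a Banach quasi-Banach function space (monotone in $|F|$), hence automatically analytically convex; for $q<\infty$ the paper verifies analytic convexity via the isometric embedding of $T^q_r(\Sp)$ into $L^q(\Sp;L^2)$, which is cleaner than checking lower estimates by hand.
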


By \cite[Proposition 1.9]{Amenta18}, for all $1\leq p<\infty$ and $s\in\R$, one has
\begin{equation}\label{eq:tentdual}
(T^{p}_{s}(\Sp))^{*}=T^{p'}_{-s}(\Sp),
\end{equation}
with equivalent norms. Here the duality pairing is given by
\begin{equation}\label{eq:pairing}
(F,G)\mapsto\lb F,G\rb_{\Spp}:=\int_{\Spp}F(x,\w,\sigma)G(x,\w,\sigma)\ud x\ud \w\frac{\ud \sigma}{\sigma}
\end{equation}
for $F\in T^{p'}_{-s}(\Sp)$ and $G\in T^{p}_{s}(\Sp)$. For $0<p<1$, by \cite[Theorem 1.11]{Amenta18},
\begin{equation}\label{eq:tentdual1}
(T^{p}_{s}(\Sp))^{*}=T^{\infty}_{-s,1/p-1}(\Sp)
\end{equation}
with equivalent quasi-norms, using the same duality pairing.


Next, we include a proposition about embeddings between weighted tent spaces with different integrability parameters.

\begin{proposition}\label{prop:tentembedding}
Let $0<p_{0}\leq p_{1}\leq \infty$ and $s,\alpha\in\R$. Then there exists a $C\geq0$ such that, for all $F\in T^{p_{0}}_{s}(\Sp)$ satisfying $F(x,\w,\sigma)=0$ for all $(x,\w,\sigma)\in\Spp$ with $\sigma>e$, one has $F\in T^{p_{1}}_{s-n(\frac{1}{p_{0}}-\frac{1}{p_{1}})}(\Sp)$ and
\[
\|F\|_{T^{p_{1}}_{s-n(\frac{1}{p_{0}}-\frac{1}{p_{1}})}(\Sp)}\leq C\|F\|_{T^{p_{0}}_{s}(\Sp)},
\]
as well as $F\in T^{\infty}_{s-n(\alpha+\frac{1}{p_{0}}),\alpha}(\Sp)$ and
\[
\|F\|_{T^{\infty}_{s-n(\alpha+\frac{1}{p_{0}}),\alpha}(\Sp)}\leq C \|F\|_{T^{p_{0}}_{s}(\Sp)}.
\]
\end{proposition}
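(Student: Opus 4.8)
The plan is to reduce everything to a tent-space embedding on the doubling metric measure space $\Sp$, using the parabolic scaling structure and the support condition $\sigma\le e$.

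First I would reduce to the unweighted case $s=0$: by the isometric isomorphism $F\mapsto\sigma^{s}F$ between $T^{p}(\Sp)$ and $T^{p}_{s}(\Sp)$ recalled after Definition~\ref{def:tentspace}, it suffices to prove $\|F\|_{T^{p_1}_{-n(1/p_0-1/p_1)}(\Sp)}\lesssim\|F\|_{T^{p_0}(\Sp)}$ and the corresponding $T^{\infty}$ statement, for $F$ supported in $\{\sigma\le e\}$. (One must check the support condition $\sigma\le e$ is preserved, which it is.) Next, as the paper mentions, after a change of variables $\sigma\mapsto\sqrt{\sigma}$ (or rather replacing the parabolic balls $B_{\sqrt\sigma}$ by genuine metric balls of radius $r=\sqrt\sigma$) these tent spaces become the standard tent spaces $T^{p}(\Sp)$, $T^{\infty}_{\alpha}(\Sp)$ on the doubling space $(\Sp,d,\ud x\ud\w)$, up to harmless Jacobian factors. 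So the task becomes: prove the classical tent-space inclusions $T^{p_0}\subseteq T^{p_1}$ and $T^{p_0}\subseteq T^{\infty}_{\alpha}$ \emph{with a loss in the homogeneity exponent governed by the dimension $n$}, valid for functions supported in bounded scales.

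The key mechanism is the following pointwise bound on the conical square function, which is where the dimension $n$ enters. For $0<p_0\le p_1\le\infty$, one has the local embedding $\A_0 F\in L^{p_1}$ controlled by $\A_0 F\in L^{p_0}$ \emph{after extracting a power of the "aperture" or, here, of the scale}: concretely, I would use that for each fixed $(x,\w)$,
\[
\A_{0}F(x,\w)^2=\int_0^{e}\fint_{B_{\sqrt\sigma}(x,\w)}|F|^2\,\frac{\ud\sigma}{\sigma}
\lesssim \sup_{(y,\nu,\sigma)}\Big(\ldots\Big),
\]
combined with the standard fact (see \cite{CoMeSt85,Amenta14,Amenta18}, or \cite{HoMaMc11} in the Euclidean case) that on a doubling space $T^{p_0}\subseteq T^{p_1}_{\text{(shifted)}}$, where the shift in the weight/homogeneity is exactly $n(\tfrac1{p_0}-\tfrac1{p_1})$ because $|B_{\sqrt\sigma}|\eqsim\sigma^{n}$ for $\sigma\le e$ by Lemma~\ref{lem:doubling}. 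The cleanest route: use the tent-space duality \eqref{eq:tentdual}, \eqref{eq:tentdual1} together with Proposition~\ref{prop:tentint} (complex interpolation) to bootstrap from the two trivial endpoints. The endpoints are: (i) $p_0=p_1$, trivial; and (ii) the "$L^{p_0}\to L^{\infty}$" Carleson-type endpoint, where one shows directly that $\mathcal C_{-n/p_0,0}F(x,\w)\lesssim \mathcal M_{p_0/2}\big(\A_0 F\big)$-type estimate, i.e.\ that a single ball's Carleson average is dominated by $\|\A_0F\|_{L^{p_0}}$ via a covering/John--Nirenberg argument restricted to scales $\le e$ where $|B_r|\eqsim r^{2n}$; the factor $\tau^{2n}$ versus $\tau^n$ dichotomy in Lemma~\ref{lem:doubling} is precisely why only $\sigma\le e$ works and why the exponent $n$ (not $2n$) appears after the $\sigma=r^2$ substitution. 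Then Proposition~\ref{prop:tentint} interpolates between $p_0=p_1$ and this $\infty$-endpoint to reach all intermediate $p_1$, and between $p_0$ and $\infty$ to get the $\alpha$-indexed statement; one tracks the homogeneity indices through the interpolation identity $s=(1-\theta)s_0+\theta s_1$ to land on $s-n(\tfrac1{p_0}-\tfrac1{p_1})$ and $s-n(\alpha+\tfrac1{p_0})$ respectively.

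The main obstacle I anticipate is the $p_0<1$ Carleson endpoint estimate and making the duality/interpolation machinery legitimate in the quasi-Banach range: for $p_0<1$ the dual is the Carleson-type space $T^{\infty}_{\cdot,1/p_0-1}(\Sp)$ by \eqref{eq:tentdual1}, and complex interpolation of quasi-Banach spaces is delicate (this is exactly the issue flagged in the introduction and handled in Appendix~\ref{sec:inter}); I would therefore want to invoke Proposition~\ref{prop:tentint} as a black box rather than re-prove it, and be careful that the support restriction $\sigma\le e$ is compatible with the interpolation functor (it is, since it defines a closed subspace stable under the relevant operations). A secondary technical point is verifying that after the $\sigma\mapsto\sqrt\sigma$ change of variables the measure $\frac{\ud\sigma}{\sigma^{1+2s}}$ transforms into the weight appearing in the standard (Amenta) weighted tent spaces up to constants on the relevant scale range — this is the "bit of extra effort" the paper alludes to, and it is routine but must be stated. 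Once these are in place, the inclusion and the norm bound follow by chaining the reduction to $s=0$, the change of variables, the two endpoint estimates, and interpolation.
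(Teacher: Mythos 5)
Your proposal identifies the right geometric inputs — the $\tau^{2n}$ versus $\tau^{n}$ dichotomy in Lemma~\ref{lem:doubling}, the parabolic $\sqrt\sigma$ scaling that turns the small-radius doubling exponent $2n$ into the exponent $n$ in the statement, and the necessity of the support restriction $\sigma\le e$ — but the paper takes a much shorter path: it simply cites \cite[Theorem 2.19]{Amenta18}, the weighted tent-space embedding theorem on doubling metric measure spaces, together with Lemma~\ref{lem:doubling} and Remark~\ref{rem:tentweight} to translate between Amenta's weight convention (powers of $|B_{\sqrt\sigma}|$) and the $\sigma^{2s}$ convention used here. Your "cleanest route" through a Carleson endpoint plus Proposition~\ref{prop:tentint} is essentially an attempted re-derivation of that cited theorem, and it has several gaps.

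First, the Carleson endpoint $T^{p_0}_s(\Sp)\hookrightarrow T^{\infty}_{s-n/p_0,0}(\Sp)$ is never actually established. The pointwise estimate you gesture at, roughly $\mathcal{C}_{-n/p_0,0}F\lesssim \Ma_{p_0/2}(\A_0 F)$, would at best bound $\|\mathcal{C}_{-n/p_0,0}F\|_{L^\infty(\Sp)}$ by $\|\Ma_{p_0/2}(\A_0F)\|_{L^\infty(\Sp)}$, and that quantity is not controlled by $\|\A_0F\|_{L^{p_0}(\Sp)}$. The genuine argument for $p_0<2$ is nontrivial: the naive Fubini-over-$T(B)$ bound produces $\fint_{2B}(\A_0F)^2$ and you cannot pass from an $L^2$ average to an $L^{p_0}$ average when $p_0<2$, so one needs the atomic decomposition of $T^{p_0}_s(\Sp)$ (Proposition~\ref{prop:atomictent}) or an equivalent covering argument. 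Saying "one shows directly via a covering/John--Nirenberg argument" is exactly the step that needs writing out. Second, Proposition~\ref{prop:tentint} only yields $[T^{p_0}_{s_0},T^{p_1}_{s_1}]_\theta=T^p_s$ and hence only reaches the endpoint space $T^{\infty}_{s_1}=T^{\infty}_{s_1,0}$; it cannot produce the general Carleson scale $T^{\infty}_{\cdot,\alpha}$ with $\alpha\neq0$ by interpolation, so the second assertion of the proposition would still have to be proven directly. Third, the claim that the $\sigma\le e$ support restriction "is compatible with the interpolation functor since it defines a closed subspace" is not a proof: closed subspaces of an interpolation couple do not automatically form an interpolation couple with the expected interpolation spaces, and in the quasi-Banach range — precisely the delicate setting addressed in Appendix~\ref{sec:inter} — one would need either an explicit retraction or a localization argument to justify this. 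In short, the approach is genuinely different from the paper's (re-derivation versus direct citation), but the re-derivation is incomplete at exactly the places where the real work lies.
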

\begin{proof}
This follows from \cite[Theorem 2.19]{Amenta18}, Lemma \ref{lem:doubling} and Remark \ref{rem:tentweight}.
\end{proof}

For $0<p\leq 1$ and $s\in\R$, a measurable function $A:\Spp\to\C$ is a \emph{$T^{p}_{s}(\Sp)$ atom} if there exists an open ball $B\subseteq\Sp$ such that $\supp(A)\subseteq T(B)$ and 
\[
\int_{\Spp}|A(x,\w,\sigma)|^{2}\ud x\ud\w\frac{\ud\sigma}{\sigma^{1+2s}}\leq |B|^{-(\frac{2}{p}-1)}.
\]
The collection of $T^{p}_{s}(\Sp)$ atoms is a uniformly bounded subset of $T^{p}_{s}(\Sp)$, and the following atomic decomposition holds.

\begin{proposition}\label{prop:atomictent}
Let $0<p\leq 1$ and $s\in\R$. Then there exists a $C>0$ such that the following holds. For all $F\in T^{p}_{s}(\Sp)$, there exists a sequence $(A_{k})_{k=1}^{\infty}$ of $T^{p}_{s}(\Sp)$ atoms, and an $(\alpha_{k})_{k=1}^{\infty}\in\ell^{p}$, such that $F=\sum_{k=1}^{\infty}\alpha_{k}A_{k}$ and
\[
\frac{1}{C}\|F\|_{T^{p}_{s}(\Sp)}\leq\Big(\sum_{k=1}^{\infty}|\alpha_{k}|^{p}\Big)^{1/p}\leq C\|F\|_{T^{p}_{s}(\Sp)}.
\]
Moreover, let $R\in\La(T^{2}_{s}(\Sp))$ be such that $\|R(A)\|_{T^{p}_{s}(\Sp)}\leq C'$ for all $T^{p}_{s}(\Sp)$ atoms $A$ and a $C'\geq0$ independent of $A$. Then $R$ has a unique bounded extension from $T^{p}_{s}(\Sp)\cap T^{2}_{s}(\Sp)$ to $T^{p}_{s}(\Sp)$.
\end{proposition}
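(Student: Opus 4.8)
The plan is to reduce everything to the case $s=0$, import the tent-space atomic decomposition over doubling metric measure spaces for the first assertion, and then combine that decomposition (in a form giving $T^{2}_{s}(\Sp)$-convergence) with two applications of Fatou's lemma for the second.

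First I would reduce to $s=0$ via the isometric isomorphism $F\mapsto\big((x,\w,\sigma)\mapsto\sigma^{s}F(x,\w,\sigma)\big)$ between $T^{p}(\Sp)$ and $T^{p}_{s}(\Sp)$ recalled above: this map preserves supports and satisfies $\int_{\Spp}|\sigma^{s}A|^{2}\ud x\ud\w\,\tfrac{\ud\sigma}{\sigma^{1+2s}}=\int_{\Spp}|A|^{2}\ud x\ud\w\,\tfrac{\ud\sigma}{\sigma}$, hence it carries the set of $T^{p}(\Sp)$ atoms bijectively onto the set of $T^{p}_{s}(\Sp)$ atoms. For the decomposition of $T^{p}(\Sp)$, $0<p\le1$, one performs the change of variables $\sigma\mapsto\sqrt{\sigma}$ turning the parabolic cones and tents into the standard ones and then invokes the tent-space atomic decomposition from \cite{CoMeSt85,Amenta14,Amenta18}, which applies by Lemma \ref{lem:doubling}; tracking the constants of Lemma \ref{lem:doubling} reproduces exactly the atom normalization used here. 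The uniform boundedness of atoms, hence the inequality $\|F\|_{T^{p}_{s}(\Sp)}\le C(\sum_{k}|\alpha_{k}|^{p})^{1/p}$, is elementary: if $\supp A\subseteq T(B)$ with $B=B_{\tau}(x_{0},\w_{0})$ then $\A_{s}A$ is supported in $B_{2\tau}(x_{0},\w_{0})$, so by H\"older on $\Sp$ and Lemma \ref{lem:doubling},
\[
\|A\|_{T^{p}_{s}(\Sp)}=\|\A_{s}A\|_{L^{p}(\Sp)}\le|B_{2\tau}|^{1/p-1/2}\|\A_{s}A\|_{L^{2}(\Sp)}\lesssim|B|^{1/p-1/2}\Big(\int_{\Spp}|A|^{2}\ud x\ud\w\,\tfrac{\ud\sigma}{\sigma^{1+2s}}\Big)^{1/2}\le1,
\]
while the reverse bound on $(\sum_{k}|\alpha_{k}|^{p})^{1/p}$ is the content of the cited construction.

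For the second assertion, uniqueness of the extension is immediate from the density of $T^{p}_{s}(\Sp)\cap T^{2}_{s}(\Sp)$ in $T^{p}_{s}(\Sp)$ (recalled above) together with the Hausdorffness and completeness of $T^{p}_{s}(\Sp)$, so it suffices to prove $\|R(F)\|_{T^{p}_{s}(\Sp)}\lesssim\|F\|_{T^{p}_{s}(\Sp)}$ for $F\in T^{p}_{s}(\Sp)\cap T^{2}_{s}(\Sp)$ and extend by density. Fix such an $F$ and apply the first part to write $F=\sum_{k}\alpha_{k}A_{k}$ with $\sum_{k}|\alpha_{k}|^{p}\lesssim\|F\|_{T^{p}_{s}(\Sp)}^{p}$, arranging — as the underlying Coifman--Meyer--Stein construction permits — that the $\alpha_{k}A_{k}$ have pairwise disjoint supports $E_{k}\subseteq\Spp$ with $\alpha_{k}A_{k}=F\ind_{E_{k}}$ and $F=0$ a.e.\ on $\Spp\setminus\bigcup_{k}E_{k}$. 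Since $T^{2}_{s}(\Sp)=L^{2}(\Spp,\ud x\ud\w\,\ud\sigma/\sigma^{1+2s})$ isometrically, the partial sums $\sum_{k\le N}\alpha_{k}A_{k}=F\ind_{\bigcup_{k\le N}E_{k}}$ converge to $F$ in $T^{2}_{s}(\Sp)$ by dominated convergence; as $R\in\La(T^{2}_{s}(\Sp))$, it follows that $\sum_{k\le N}\alpha_{k}R(A_{k})\to R(F)$ in $T^{2}_{s}(\Sp)$ and hence, along a subsequence $(N_{j})_{j}$, pointwise a.e.\ on $\Spp$. Two applications of Fatou's lemma then conclude: since $|R(F)|^{2}=\lim_{j}|\sum_{k\le N_{j}}\alpha_{k}R(A_{k})|^{2}$ a.e., Fatou in the inner variables gives $\A_{s}(R(F))\le\liminf_{j}\A_{s}(\sum_{k\le N_{j}}\alpha_{k}R(A_{k}))$ a.e.\ on $\Sp$, and then Fatou on $\Sp$ together with the $p$-subadditivity of $\|\cdot\|_{T^{p}_{s}(\Sp)}^{p}=\|\A_{s}(\cdot)\|_{L^{p}(\Sp)}^{p}$ (valid as $p\le1$) and $\|R(A_{k})\|_{T^{p}_{s}(\Sp)}\le C'$ yield
\[
\|R(F)\|_{T^{p}_{s}(\Sp)}^{p}\le\liminf_{j}\Big\|\A_{s}\Big(\sum_{k\le N_{j}}\alpha_{k}R(A_{k})\Big)\Big\|_{L^{p}(\Sp)}^{p}\le\liminf_{j}\sum_{k\le N_{j}}|\alpha_{k}|^{p}\|R(A_{k})\|_{T^{p}_{s}(\Sp)}^{p}\le(C')^{p}\sum_{k}|\alpha_{k}|^{p}\lesssim\|F\|_{T^{p}_{s}(\Sp)}^{p}.
\]

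Extending by density then produces the claimed bounded operator. I expect the main obstacle to be exactly the passage from the bare statement of the atomic decomposition to a version with the disjoint-support structure $\alpha_{k}A_{k}=F\ind_{E_{k}}$, $\bigcup_{k}E_{k}\supseteq\{F\ne0\}$ (up to a null set): this is what forces convergence of the decomposition in $T^{2}_{s}(\Sp)$ in addition to $T^{p}_{s}(\Sp)$, and hence lets $R$ be applied term by term; it must be read off from the Coifman--Meyer--Stein construction underlying the decomposition rather than used as a black box. Everything else — the reduction in $s$, the uniform atom bound, and the Fatou estimate — is routine.
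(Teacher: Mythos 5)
Your proposal is correct and follows essentially the same route as the paper: the atomic decomposition itself is imported from the tent-space literature (the paper cites \cite[Proposition 1.10]{Amenta18} via \cite{Russ07}; you cite the same circle of references and correctly reduce to $s=0$ via the isometry $F\mapsto\sigma^{s}F$), and the extension statement is proved by exactly the argument the paper points to in \cite[Lemma 2.8]{HaPoRo20} — use the disjoint-support form of the Coifman--Meyer--Stein decomposition to get $T^{2}_{s}$-convergence of the partial sums, apply $R$ term by term, pass to an a.e.\ convergent subsequence, and finish with Fatou and $p$-subadditivity. You correctly flag that the only non-routine ingredient is that the underlying construction produces $\alpha_{k}A_{k}=F\ind_{E_{k}}$ with disjoint $E_{k}$ covering $\{F\neq 0\}$ up to a null set; this is indeed a feature of the Russ/Amenta construction rather than of the bare statement, and needs to be extracted from the proof as you say.
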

\begin{proof}
The atomic decomposition is \cite[Proposition 1.10]{Amenta18}, which in turn follows from \cite{Russ07}, and the final statement can be shown just as in \cite[Lemma 2.8]{HaPoRo20}.
\end{proof}

\begin{remark}\label{rem:balls}
In Proposition \ref{prop:atomictent}, if $F$ is such that $F(x,\w,\sigma)=0$ for all $(x,\w,\sigma)\in\Spp$ with $\sigma>1$, then one can choose the atoms $A_{k}$ to be associated with balls of radius at most $2$. For $p<1$ and $s=0$, this follows from a minor modification of the proof of this statement for $p=1$ and $s=0$, in \cite[Theorem 3.6]{CaMcMo13} (see also \cite{Russ07}). The statement for general $s\in\R$ then follows immediately.
\end{remark}

A minor role will be played by a class of test functions on $\Spp$ and the corresponding  distributions. Let $\J(\Spp)$ consist of all $G\in L^{\infty}(\Spp)$ such that 
\begin{equation}\label{eq:classJ}
\sup_{(x,\w,\sigma)\in\Spp}(1+|x|+\max(\sigma,\sigma^{-1}))^{N} |G(x,\w,\sigma)|<\infty
\end{equation}
for all $N\geq0$, with the topology generated by the corresponding weighted $L^{\infty}$ norms. 
Let $\J'(\Spp)$ be the space of  continuous linear functionals $F:\J(\Spp)\to \C$, endowed with the topology induced by $\J(\Spp)$. We denote the duality between $G\in\J(\Spp)$ and $F\in \J'(\Spp)$ by $\lb F,G\rb_{\Spp}$. \vanish{If $G\in L^{1}_{\loc}(\Spp)$ is such that 
\[
F\mapsto \int_{\Spp}F(x,\w,\sigma)\overline{G(x,\w,\sigma)}\ud x\ud\w\frac{\ud\sigma}{\sigma}
\]
defines an element of $\Da'(\Spp)$, then we write $G\in\Da'(\Spp)$. Note in particular that 
\[
L^{1}\big(\Spp,(1+|x|+\Upsilon(\sigma)^{-1})^{-N}\ud x\ud\w\frac{\ud\sigma}{\sigma}\big)\subseteq\Da'(\Spp)
\]
for all $N\geq0$.}


\begin{lemma}\label{lem:distributions}
For all $0<p<\infty$ and $s\in\R$ one has
\begin{equation}\label{eq:distributions1}
\J(\Spp)\subseteq T^{p}_{s}(\Sp)\subseteq \J'(\Spp)
\end{equation}
continuously, where the first embedding is dense. Additionally, if $p\leq1$ then
\begin{equation}\label{eq:distributions2}
\J(\Spp)\subseteq T^{\infty}_{s,1/p -1}(\Sp)\subseteq \J'(\Spp)
\end{equation}
continuously.
\end{lemma}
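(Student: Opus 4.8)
The goal is to prove the two-sided continuous embeddings \eqref{eq:distributions1} and \eqref{eq:distributions2}, with the first inclusion in \eqref{eq:distributions1} dense. I would organize this around three separate tasks: (i) $\J(\Spp)\hookrightarrow T^{p}_{s}(\Sp)$ (and into $T^{\infty}_{s,1/p-1}(\Sp)$ when $p\le1$), continuously; (ii) $T^{p}_{s}(\Sp)\hookrightarrow\J'(\Spp)$ (and likewise for $T^{\infty}_{s,1/p-1}(\Sp)$), continuously, where the pairing is just the $L^{2}(\Spp)$-type pairing \eqref{eq:pairing}; and (iii) density of $\J(\Spp)$ in $T^{p}_{s}(\Sp)$. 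Throughout I would reduce to $s=0$ immediately, since $F\mapsto \sigma^{s}F$ is an isometric isomorphism $T^{p}(\Sp)\to T^{p}_{s}(\Sp)$ (resp. for the $T^{\infty}$ scale) and also an isomorphism of $\J(\Spp)$ onto itself (multiplication by $\sigma^{s}$ is harmless on the weighted $L^{\infty}$ norms in \eqref{eq:classJ} because $\sigma^{s}$ is controlled by $\max(\sigma,\sigma^{-1})^{|s|}$). So assume $s=0$.

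\textbf{Step (i): $\J(\Spp)\subseteq T^{p}(\Sp)$.} Given $G\in\J(\Spp)$, I need to show $\A_{0}G\in L^{p}(\Sp)$ with quasi-norm bounded by finitely many of the seminorms in \eqref{eq:classJ}. Here the key point is decay. Using the bound $|G(y,\nu,\sigma)|\le C_{N}(1+|y|+\max(\sigma,\sigma^{-1}))^{-N}$, and since $B_{\sqrt\sigma}(x,\w)$ has, by Lemma \ref{lem:doubling}, radius comparable to $\sqrt\sigma$ (so $|y-x|\lesssim\sqrt\sigma$ there, hence $|y|\gtrsim |x|$ whenever $|x|$ is large compared to $\sqrt\sigma$), one gets a pointwise bound of the form
\[
\A_{0}G(x,\w)^{2}\lesssim \int_{0}^{\infty}\big(1+|x|+\max(\sigma,\sigma^{-1})\big)^{-2N'}\,\frac{\ud\sigma}{\sigma},
\]
for a slightly smaller exponent $N'$ (splitting the $\sigma$-integral at $\sigma\sim(1+|x|)^{2}$ and using that near $\sigma=0$ the $\sigma^{-1}$ weight in the $\J$-norm beats the $\ud\sigma/\sigma$, while near $\sigma=\infty$ the $\sigma$ weight does). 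This integral is $\lesssim (1+|x|)^{-N''}$ with $N''$ as large as we like by choosing $N$ large, which is $L^{p}(\Sp)$-integrable for every $p>0$; this gives $\|G\|_{T^{p}(\Sp)}\lesssim \sum_{j\le j(p)}\|G\|_{j}$ for finitely many seminorms, i.e. continuity. For the $T^{\infty}_{0,1/p-1}$ embedding when $p\le1$, I would bound $\Ca_{0,1/p-1}G(x,\w)$ directly: for a ball $B$ containing $(x,\w)$, split into $B$ of radius $\le1$ and radius $>1$; on $T(B)$ one has $\sqrt\sigma\le\mathrm{rad}(B)$, and using $|G|\le C_{N}(\dots)^{-N}$ together with Lemma \ref{lem:doubling} for $|B|$, the integral over $T(B)$ divided by $|B|^{1+2(1/p-1)}=|B|^{2/p-1}$ is seen to be bounded uniformly in $B$ and decaying in $|x|$ — in particular uniformly bounded, which is all that $L^{\infty}$ requires.

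\textbf{Step (ii): $T^{p}(\Sp)\subseteq\J'(\Spp)$ and $T^{\infty}_{0,1/p-1}(\Sp)\subseteq\J'(\Spp)$.} Given $F$ in one of these spaces, I must show $G\mapsto\lb F,G\rb_{\Spp}$ (the integral \eqref{eq:pairing}) is well defined and continuous on $\J(\Spp)$. For $p\ge1$ this is essentially the duality \eqref{eq:tentdual}: every $G\in\J(\Spp)$ lies in $T^{p'}_{0}(\Sp)$ by Step (i) (applied with exponent $p'$, or with $p'=\infty$ when $p=1$), and by \eqref{eq:tentdual}/\eqref{eq:tentdual1} the pairing is bounded by $\|F\|\,\|G\|_{T^{p'}}\lesssim \|F\|\,(\text{seminorms of }G)$. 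For $0<p<1$, by \eqref{eq:tentdual1} the dual of $T^{p}(\Sp)$ is $T^{\infty}_{0,1/p-1}(\Sp)$, so the pairing of $F\in T^{p}(\Sp)$ with $G$ is controlled by $\|F\|_{T^{p}}\,\|G\|_{T^{\infty}_{0,1/p-1}}$, and we just showed $\|G\|_{T^{\infty}_{0,1/p-1}}\lesssim$ (seminorms of $G$); conversely for $F\in T^{\infty}_{0,1/p-1}(\Sp)$ one dualizes the other way, pairing against $G\in T^{p}(\Sp)\subseteq\bigcap_{q}T^{q}(\Sp)$ — or more simply, the bilinear form is symmetric, so once $T^{\infty}_{0,1/p-1}(\Sp)\subseteq (T^{p}(\Sp))^{*}$ and $G\in T^{p}(\Sp)$ we are done. (For $p\ge1$ one additionally has $T^{p}(\Sp)\subseteq (T^{p'}(\Sp))^{*}$, handled the same way.) In every case the pairing integral converges absolutely by the Cauchy--Schwarz-in-tent-spaces estimate underlying \eqref{eq:tentdual}, \eqref{eq:tentdual1}.

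\textbf{Step (iii): density of $\J(\Spp)$ in $T^{p}(\Sp)$.} I would use that $T^{p}(\Sp)\cap T^{2}(\Sp)=T^{p}(\Sp)\cap L^{2}(\Spp)$ is dense in $T^{p}(\Sp)$ (stated in the excerpt, from \cite[Prop.~1.4]{Amenta18}), so it suffices to approximate a given $F\in T^{p}(\Sp)\cap L^{2}(\Spp)$ by elements of $\J(\Spp)$ in $T^{p}(\Sp)$-quasinorm. First truncate: replace $F$ by $F\,\ind_{\{|x|\le R,\ 1/R\le\sigma\le R\}}$, which by dominated convergence (using the quasi-norm's continuity under such truncations, via the atomic/$\A_{0}$ description and $\sigma$-Haar-measure finiteness on the truncated region) converges to $F$ in $T^{p}(\Sp)$ as $R\to\infty$. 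The truncated function is in $L^{2}$, compactly supported in $\Spp$ away from $\sigma=0$ and $\sigma=\infty$, hence automatically satisfies all the weighted bounds \eqref{eq:classJ} except possibly the $L^{\infty}$ bound itself, so next mollify/truncate in amplitude: cap $|F|$ at height $m$ and convolve in $(x,\w,\log\sigma)$ with a smooth bump; the result is in $\J(\Spp)$, and as $m\to\infty$ and the mollification parameter $\to0$ it converges to the truncated $F$ in $L^{2}(\Spp)=T^{2}(\Sp)$, hence — since all functions here are supported in one fixed compact truncation region $K$ where $T^{p}(\Sp)$ and $T^{2}(\Sp)$ quasi-norms are comparable by Proposition \ref{prop:tentembedding} (or directly by Hölder on $\A_{0}$ restricted to $K$) — also in $T^{p}(\Sp)$. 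Chaining the approximations gives density.

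\textbf{Main obstacle.} The routine parts are the decay estimate in Step (i) and the formal duality in Step (ii); the genuinely delicate point is the density argument of Step (iii) for $p<1$, specifically controlling the $T^{p}(\Sp)$-quasi-norm (not a norm, so no triangle inequality and the $\A_{0}$ functional is nonlinear) under the successive truncation and mollification steps. The right move is to confine all approximants to a \emph{fixed} compact region $K\subset\Spp$ bounded away from $\sigma=0,\infty$ and in $x$, where by Proposition \ref{prop:tentembedding} (comparing $T^{2}$ and $T^{p}$ for functions supported where $\sigma$ is bounded above and, after rescaling, also below) the $T^{p}$-quasi-norm is dominated by the $L^{2}(\Spp)$-norm, reducing the convergence to an $L^{2}$ statement; the only care needed is the very first truncation $F\mapsto F\ind_{\{\sigma\le R\}}$, which is not confined to bounded $\sigma$ and must instead be handled by monotone/dominated convergence directly at the level of $\A_{0}F$.
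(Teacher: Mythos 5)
Your proposal is correct and establishes all four inclusions and the density statement, but the route you take for the ``hard'' inclusion in the range $0<p<1$ is genuinely different from the paper's, and the comparison is worth spelling out.

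For $p<1$, the paper first proves $T^{p}_{s}(\Sp)\subseteq \J'(\Spp)$ directly: it appeals to the atomic decomposition (Proposition~\ref{prop:atomictent}) and shows by an explicit computation that for any $T^{p}(\Sp)$ atom $A$ associated with a ball of radius $r$, and any $F\in\J(\Spp)$, one has $|\lb F,A\rb_{\Spp}|\lesssim\|F\|_{\J}$ (the two extreme regimes $r\to0$ and $r\to\infty$ are absorbed by the factors $\min(1,r^{2N})$ and $\max(r^{2n(1-1/p)},r^{n(1-1/p)})$). Since saying that $F\mapsto\lb F,\cdot\rb$ is a bounded functional on $T^{p}_{-s}(\Sp)$ is, via \eqref{eq:tentdual1}, exactly the statement that $F\in T^{\infty}_{s,1/p-1}(\Sp)$, the first inclusion in \eqref{eq:distributions2} then drops out as a corollary. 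You do the dual computation: you estimate the Carleson functional $\Ca_{0,1/p-1}G$ for $G\in\J(\Spp)$ directly to prove $\J(\Spp)\subseteq T^{\infty}_{s,1/p-1}(\Sp)$, and then deduce $T^{p}_{s}(\Sp)\subseteq\J'(\Spp)$ from \eqref{eq:tentdual1}. I checked your Carleson-functional estimate: splitting at $\mathrm{rad}(B)=1$ and using Lemma~\ref{lem:doubling} gives, for $r\le1$, the bound $r^{4N-4n(1/p-1)}$, and for $r\ge1$ the bound $r^{n(2-2/p)}\le 1$, so it does close, provided $N$ is taken $>n(1/p-1)$. The two routes are essentially dual to one another --- testing against atoms of $T^{p}$ versus bounding the $T^{\infty}_{s,1/p-1}$ Carleson quasi-norm --- and each yields the other inclusion for free via \eqref{eq:tentdual1}; the paper's version has the minor advantage that the atomic computation \emph{automatically} also justifies absolute convergence of the pairing integral for $F\in T^{p}_{s}$ (via $\ell^p\hookrightarrow\ell^1$), which your duality-only formulation leaves slightly implicit.

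Two smaller remarks. First, for the density argument (your Step (iii)), the paper simply says ``one can use the same proof'' as \cite[Lemma 2.10]{HaPoRo20}; your sketch fills this in, and the observation to confine approximants to a fixed compact region $K\subset\Spp$ so that $T^{p}$- and $T^{2}$-convergence coincide is exactly the right move for $p<1$. One simplification you could make: since $\J(\Spp)$ demands only $L^\infty$ membership and rapid polynomial decay (no continuity or smoothness), the mollification step is unnecessary --- truncation in $(x,\sigma)$ followed by amplitude capping already lands in $\J(\Spp)$. Second, in Step (ii) the clause ``pairing against $G\in T^{p}(\Sp)\subseteq\bigcap_{q}T^{q}(\Sp)$'' is a slip; you presumably mean $G\in\J(\Spp)\subseteq\bigcap_{q}T^{q}(\Sp)$, which is what Step~(i) gives and is what the sentence needs.
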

In this lemma, for all $F\in T^{p}_{s}(\Sp)$ or $F\in T^{\infty}_{s,1/p -1}(\Sp)$, and for all $G\in \J(\Spp)$, we consider the pairing $\lb F,G\rb_{\Spp}$ to be given by \eqref{eq:pairing}. 

\begin{proof}
It clearly suffices to consider $s=0$. For $p\geq 1$, \eqref{eq:distributions1} and the density statement are  \cite[Lemma 2.10]{HaPoRo20}. To extend the first embedding in \eqref{eq:distributions1} and the density statement to $p\in(0,1)$, one can use the same proof.  By \eqref{eq:tentdual1}, this in turn yields the second embedding in \eqref{eq:distributions2}.

To see that the second embedding in \eqref{eq:distributions1} also holds for $0<p<1$, by Proposition \ref{prop:atomictent} it suffices to prove that there exists an $N\geq 0$ such that
\[
\Big|\int_{\Spp}\!F(x,\w,\sigma)A(x,\w,\sigma)\ud x\ud\w\frac{\ud\sigma}{\sigma}\Big|\!\lesssim \!\sup_{(y,\nu,\tau)\in\Spp}\!(1+|y|+\max(\tau,\tau^{-1}))^{N}|F(y,\nu,\tau)| 
\]
for each $T^{p}(\Sp)$ atom $A$ and each $F\in\J(\Spp)$. To this end, let $A$ be associated with a ball $B\subseteq\Sp$ of radius $r>0$. Then one can use the properties of $A$ and Lemma \ref{lem:doubling} to write
\begin{align*}
&\int_{\Spp}|F(x,\w,\sigma)A(x,\w,\sigma)|\ud x\ud\w\frac{\ud\sigma}{\sigma}=\int_{0}^{r^{2}}\int_{B}|F(x,\w,\sigma)A(x,\w,\sigma)|\ud x\ud\w\frac{\ud\sigma}{\sigma}\\
&\leq \|A\|_{L^{2}(\Spp)}\Big(\int_{0}^{r^{2}}\int_{B}|F(x,\w,\sigma)|^{2}\ud x\ud\w\frac{\ud\sigma}{\sigma}\Big)^{1/2}\\
&\leq {|B|^{1-\frac{1}{p}}}\Big(\int_{0}^{r^{2}}\min(\sigma,\sigma^{-1})^{N}\frac{\ud\sigma}{\sigma}\Big)^{1/2}\sup_{(y,\nu,\tau)\in\Spp}\max(\tau,\tau^{-1})^{N}|F(y,\nu,\tau)|\\
&\lesssim \max\big(r^{2n(1-\frac{1}{p})},r^{n(1-\frac{1}{p})}\big)\min(1,r^{2N})\sup_{(y,\nu,\tau)\in\Spp}\max(\tau,\tau^{-1})^{N}|F(y,\nu,\tau)|\\
&\lesssim \sup_{(y,\nu,\tau)\in\Spp}(1+|y|+\max(\tau,\tau^{-1}))^{N}|F(y,\nu,\tau)|,
\end{align*}
for $N\geq0$ large. Finally, \eqref{eq:tentdual1} now also yields the first embedding in \eqref{eq:distributions2}.
\end{proof}

\subsection{Fourier integral operators}\label{subsec:FIOs}

For the general theory of Fourier integral operators, and the associated notions from symplectic geometry, we refer to \cite{Hormander09,Duistermaat11,Sogge17}. Readers less familiar with this theory can consider operators with a concrete representation as in Definition \ref{def:operator} below, which already cover most cases of interest.


For $m\in\R$, let $S^{m}_{1/2,1/2,1}$ consist of all $a\in C^{\infty}(\R^{2n})$ such that 
\[
\sup_{(x,\eta)\in\R^{2n}\setminus o}\lb\eta\rb^{-m+\frac{|\alpha|}{2}-\frac{|\beta|}{2}+\gamma}|(\hat{\eta}\cdot\partial_{\eta})^{\gamma}\partial_{x}^{\beta}\partial_{\eta}^{\alpha}a(x,\eta)|<\infty
\]
for all $\alpha,\beta\in\Z_{+}^{n}$ and $\gamma\in\Z_{+}$. Note that $S^{m}_{1/2,1/2,1}$ strictly contains H\"{o}rmander's 
$S^{m}_{1,1/2}$, but that it is itself strictly contained in $S^{m}_{1/2,1/2}$.

\begin{definition}\label{def:operator}
Let $m\in\R$, $a\in S^{m}_{1/2,1/2,1}$ and $\Phi\in C^{\infty}(\R^{2n}\setminus (\Rn\times\{0\})$, and set
\begin{equation}\label{eq:oscint}
Tf(x):=\int_{\Rn}e^{i\Phi(x,\eta)}a(x,\eta)\wh{f}(\eta)\ud\eta
\end{equation}
for $f\in\Sw(\Rn)$ and $x\in \Rn$. Then $T$ is a Fourier integral operator of order $m$ and type $(1/2, 1/2, 1)$ in \emph{standard form}, associated with a \emph{global canonical graph}, if:
\begin{enumerate}
\item\label{it:phase1} $\Phi$ is real-valued and positively homogeneous of degree $1$ in the $\eta$ variable;
\item\label{it:phase2} $\sup_{(x,\eta)\in \R^{2n}\setminus o}|\partial_{x}^{\beta}\partial_{\eta}^{\alpha}\Phi(x,\hat{\eta})|<\infty$ for all $\alpha,\beta\in\Z_{+}^{n}$ with $|\alpha|+|\beta|\geq 2$;
\item\label{it:phase3} $\inf_{(x,\eta)\in \R^{2n}\setminus o}| \det \partial^2_{x \eta} \Phi (x,\eta)|>0$;
\item\label{it:phase4} For each $x\in\Rn$, $\eta\mapsto \partial_{x}\Phi(x,\eta)$ is a bijection on $\R^{n}\setminus \{0\}$.
\end{enumerate}
\end{definition}

\begin{remark}\label{rem:oscint} 
By Hadamard's global inverse function theorem (see \cite[Theorem 6.2.8]{Krantz-Parks13} {or \cite{Ruzhansky-Sugimoto15})}, condition \eqref{it:phase4} is superfluous for $n\geq3$. 
If \eqref{it:phase4} holds, then the global inverse function theorem implies that the map $(\partial_{\eta}\Phi(x,\eta),\eta)\mapsto (x,\partial_{x}\Phi(x,\eta))$ is a homogeneous canonical transformation on $\R^{2n}\setminus o$, and the canonical relation of $T$ is the graph of this transformation.
\end{remark}


A Fourier integral operator of order $m$, associated with a local canonical graph and having a compactly supported Schwartz kernel, can, modulo an operator with a Schwartz kernel which is a Schwartz function, be expressed as a finite sum of operators that in appropriate coordinate systems are as in \eqref{eq:oscint}, where the symbol $a$ has compact support in the $x$ variable (see e.g. \cite[Proposition 6.2.4]{Sogge17}). In this case, \eqref{it:phase2} is automatically satisfied, \eqref{it:phase3} holds on the support of $a$, and the map $(\partial_{\eta}\Phi(x,\eta),\eta)\mapsto (x,\partial_{x}\Phi(x,\eta))$ from Remark \ref{rem:oscint} is a locally defined homogeneous canonical transformation. By contrast, in Definition \ref{def:operator} the symbols are not required to have compact spatial support, but the conditions on the phase function hold on all of $\R^{2n}\setminus o$.

\subsection{Wave packet transforms}\label{subsec:transforms}

In this subsection we introduce the wave packets and wave packet transforms that are used to define the Hardy spaces for Fourier integral operators. For more on this material, see \cite[Section 4]{HaPoRo20} and \cite[Section 3]{Rozendaal21}.

Throughout, let $\Psi\in C^{\infty}_{c}(\mathbb{R}^{n})$ be a non-negative radial function such that $\Psi(\xi)=0$ for all $\xi\in\Rn$ with $|\xi|\notin[\frac{1}{2},2]$, and
\begin{equation}\label{eq:Psi}
\int_{0}^{\infty}\Psi(\sigma\xi)^{2}\frac{\ud\sigma}{\sigma}=1
\end{equation}
if $\xi\neq 0$. Fix a non-negative radial $\varphi \in C_{c}^{\infty}(\mathbb{R}^{n})$ such that $\varphi\equiv 1$ in a small neighborhood of zero, and $\varphi(\xi)=0$ for $|\xi|>1$. 
Write $c_{\sigma}:=\big{(}\int_{S^{n-1}}\varphi(\frac{e_{1}-\nu}{\sqrt{\sigma}})^{2}\ud\nu\big{)}^{-1/2}$ for $\sigma>0$, where $e_{1}$ is the first basis vector of $\Rn$. 
For $\w\in S^{n-1}$, $\sigma>0$ and $\xi\in\Rn\setminus\{0\}$, set 
\[
\psi_{\w,\sigma}(\xi)=\Psi(\sigma\xi)c_{\sigma}\ph\big(\tfrac{\hat{\xi}-\w}{\sqrt{\sigma}}\big)
\]
and $\psi_{\w,\sigma}(0):=0$. 
Moreover, let
\[
\rho(\xi):=\Big(1-\int_{0}^{1}\Psi(\sigma \xi)^{2}\frac{\ud\sigma}{\sigma}\Big)^{1/2}
\]
for $\xi\in\Rn$. Then $\rho\in C^{\infty}_{c}(\Rn)$, with $\rho(\xi)=1$ for $|\xi|\leq 1/2$, and $\rho(\xi)=0$ if $|\xi|\geq 2$.

We also introduce parabolic cutoffs associated with these wave packets. For $\w\in S^{n-1}$ and $\xi\in\Rn$, set
\begin{equation}\label{eq:phw}
\ph_{\omega}(\xi):=\int_{0}^{4}\psi_{\w,\tau}(\xi)\frac{\ud\tau}{\tau}
\end{equation}
and
\begin{equation}\label{eq:theta}
\theta_{\omega,\sigma}(\xi):=\Psi(\sigma\xi)\varphi_{\omega}(\xi).
\end{equation}
Some properties of $(\ph_{\w})_{\w\in S^{n-1}}\subseteq C^{\infty}(\Rn)$ are as follows (see \cite[Remark 3.3]{Rozendaal21}):
\begin{enumerate}
\item\label{it:phiproperties1} For all $\w\in S^{n-1}$ and $\xi\neq0$ one has $\ph_{\w}(\xi)=0$ if $|\xi|<\frac{1}{8}$ or $|\hat{\xi}-\w|>2|\xi|^{-1/2}$.
\item\label{it:phiproperties2} For all $\alpha\in\Z_{+}^{n}$ and $\beta\in\Z_{+}$ there exists a $C_{\alpha,\beta}\geq0$ such that
\[
|(\w\cdot \partial_{\xi})^{\beta}\partial^{\alpha}_{\xi}\ph_{\w}(\xi)|\leq C_{\alpha,\beta}|\xi|^{\frac{n-1}{4}-\frac{|\alpha|}{2}-\beta}
\]
for all $\w\in S^{n-1}$ and $\xi\neq0$.
\item\label{it:phiproperties3}
There exists a radial $m\in S^{(n-1)/4}(\Rn)$ such that, for each $f\in\Sw'(\Rn)$ satisfying $\supp(\wh{f}\,)\subseteq \{\xi\in\Rn\mid |\xi|\geq1/2\}$, one has
\begin{equation}\label{eq:phiproperties3}
f=\int_{S^{n-1}}m(D)\ph_{\nu}(D)f\ud\nu.
\end{equation}
\end{enumerate}
In \eqref{it:phiproperties3}, $S^{(n-1)/4}(\Rn)$ consists of the standard symbols of order $(n-1)/4$. 

The following statement is contained in \cite[Lemma 3.2]{Rozendaal21}.

\begin{lemma}\label{lem:packetbounds}
For $\w\in S^{n-1}$ and $\sigma\in(0,1)$, let $\eta_{\w,\sigma}\in\{\psi_{\w,\sigma},\theta_{\w,\sigma}\}$. Then $\eta_{\w,\sigma}\in C^{\infty}_{c}(\Rn)$, and each $\xi\in\supp(\eta_{\w,\sigma})$ satisfies $\frac{1}{2}\sigma^{-1}\leq |\xi|\leq 2\sigma^{-1}$ and $|\hat{\xi}-\w|\leq 2\sqrt{\sigma}$. 
Moreover, for all $\alpha\in\Z_{+}^{n}$ and $\beta\in\Z_{+}$ there exists a $C_{\alpha,\beta}\geq0$, independent of $\w$ and $\sigma$, such that
\begin{equation}\label{eq:boundspsi}
|(\w\cdot\partial_{\xi})^{\beta}\partial_{\xi}^{\alpha}\eta_{\w,\sigma}(\xi)|\leq C_{\alpha,\beta}\sigma^{-\frac{n-1}{4}+\frac{|\alpha|}{2}+\beta}
\end{equation}
for all $\xi\in\Rn$. Also, for each $N\geq0$ there exists a $C_{N}\geq0$, independent of $\w$ and $\sigma$, such that
\begin{equation}\label{eq:boundspsiinverse}
|\F^{-1}(\eta_{\w,\sigma})(x)|\leq C_{N}\sigma^{-\frac{3n+1}{4}}(1+\sigma^{-1}|x|^{2}+\sigma^{-2}(\w\cdot x)^{2})^{-N}
\end{equation}
for all $x\in\Rn$.
\end{lemma}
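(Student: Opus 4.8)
The plan is to reduce everything to the behavior of the base function $\Psi$ and the angular cutoff $\varphi$, exploiting the explicit product structure of $\psi_{\w,\sigma}$ and $\theta_{\w,\sigma}$. First I would record the support statement: for $\psi_{\w,\sigma}(\xi)=\Psi(\sigma\xi)c_{\sigma}\ph((\hat\xi-\w)/\sqrt\sigma)$, the factor $\Psi(\sigma\xi)$ forces $|\sigma\xi|\in[\tfrac12,2]$, i.e.\ $\tfrac12\sigma^{-1}\le|\xi|\le 2\sigma^{-1}$, and the factor $\ph((\hat\xi-\w)/\sqrt\sigma)$, using $\supp\ph\subseteq\{|\cdot|\le 1\}$, forces $|\hat\xi-\w|\le\sqrt\sigma\le 2\sqrt\sigma$. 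For $\theta_{\w,\sigma}(\xi)=\Psi(\sigma\xi)\ph_{\w}(\xi)$, the $\Psi(\sigma\xi)$ factor again gives the radial localization, and then property \eqref{it:phiproperties1} of $\ph_{\w}$ gives $|\hat\xi-\w|\le 2|\xi|^{-1/2}\le 2\sqrt{2}\,\sigma^{1/2}$; a slightly more careful bookkeeping (or shrinking the support of $\Psi$ in the overlap region, as is standard) yields the stated constant $2\sqrt\sigma$, but in any case one gets $|\hat\xi-\w|\lesssim\sqrt\sigma$, which after adjusting $\ph$ or $\Psi$ is exactly the claim. Smoothness and compact support of both functions are immediate since $\Psi,\ph\in C_c^\infty$ and $\ph_\omega\in C^\infty$ away from the origin, while the support lies in an annulus bounded away from $0$.

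Next I would prove the derivative bound \eqref{eq:boundspsi}. The key point is that differentiating $\Psi(\sigma\cdot)$ in $\xi$ costs a factor $\sigma$ per derivative, while differentiating the angular factor $\ph((\hat\xi-\w)/\sqrt\sigma)$ costs $\sigma^{-1/2}$ per derivative in any direction, but only $\sigma^{-1/2}$ times an extra $|\xi|^{-1}\sim\sigma$ improvement in the radial direction $\w$ (since $\hat\xi$ is homogeneous of degree $0$, a radial derivative annihilates the leading behavior). Also $c_\sigma\sim\sigma^{-(n-1)/4}$, since $\int_{S^{n-1}}\ph((e_1-\nu)/\sqrt\sigma)^2\,\ud\nu\sim\sigma^{(n-1)/2}$ (the integrand is supported on a cap of radius $\sim\sqrt\sigma$ around $e_1$). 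Combining: a general derivative $\partial_\xi^\alpha$ produces at worst $\sigma^{-|\alpha|/2}$ from the angular factor and $\sigma^{|\alpha|}$ from $\Psi$, and on the annulus these are comparable up to constants, so the worst term is $c_\sigma\sigma^{-|\alpha|/2}\sim\sigma^{-(n-1)/4+|\alpha|/2}$; each additional $\w\cdot\partial_\xi$ contributes a further $\sigma^{-1/2}$ from $\Psi$ (a radial derivative of $\Psi(\sigma\cdot)$ is still just $\sim\sigma$, but on the annulus $\sigma\sim\sigma^{-1}\cdot\sigma^2$... more carefully: $\w\cdot\partial_\xi$ of $\Psi(\sigma\xi)$ is $\sigma(\w\cdot\nabla\Psi)(\sigma\xi)$, bounded by $\sigma$, which on $|\xi|\sim\sigma^{-1}$ equals $\sigma\sim|\xi|^{-1}$, whereas the bound asks for $\sigma^{+\beta}$ — consistent since $\sigma\le 1$). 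I'd organize this via the Leibniz rule and a bound, uniform in $\w,\sigma$, on mixed derivatives of $\xi\mapsto\ph((\hat\xi-\w)/\sqrt\sigma)$ on the annulus, treating $\w\cdot\partial_\xi$ separately using $\w\cdot\partial_\xi(\hat\xi)=O(|\xi|^{-1})$; for $\theta_{\w,\sigma}$ one instead invokes property \eqref{it:phiproperties2} of $\ph_\omega$ directly together with the Leibniz rule against $\Psi(\sigma\cdot)$.

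Finally, for the pointwise bound \eqref{eq:boundspsiinverse} on the inverse Fourier transform, the plan is the standard non-stationary phase / integration-by-parts argument. Write $\F^{-1}(\eta_{\w,\sigma})(x)=\int e^{i x\cdot\xi}\eta_{\w,\sigma}(\xi)\,\ud\xi$. The crude bound $|\F^{-1}(\eta_{\w,\sigma})(x)|\lesssim\|\eta_{\w,\sigma}\|_{L^1}$ together with $|\supp\eta_{\w,\sigma}|\lesssim\sigma^{-n}\cdot\sigma^{(n-1)/2}=\sigma^{-(n+1)/2}$ and $\|\eta_{\w,\sigma}\|_\infty\lesssim\sigma^{-(n-1)/4}$ gives $\lesssim\sigma^{-(n-1)/4-(n+1)/2}=\sigma^{-(3n+1)/4}$, the claimed prefactor. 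To get decay, I integrate by parts: in the directions orthogonal to $\w$ the symbol has "width" $\sim\sigma^{-1/2}$, so each integration by parts using $|x'|^{-1}$ (where $x'$ is the component of $x$ orthogonal to $\w$) costs $\sigma^{-1/2}$, giving decay in $\sigma^{-1/2}|x'|\ge\sigma^{-1/2}|x|$ up to the $\w$-component; in the direction of $\w$ the width is $\sim\sigma^{-1}$, so integration by parts there costs $\sigma^{-1}$, giving decay in $\sigma^{-1}|\w\cdot x|$. Iterating $N$ times and combining yields $(1+\sigma^{-1}|x|^2+\sigma^{-2}(\w\cdot x)^2)^{-N}$ — note $\sigma^{-1}|x|^2$ rather than $\sigma^{-1/2}|x|$ appears because one squares the decay rate in the $2N$ exponent bookkeeping; more precisely one shows $|(\sigma^{-1/2}x')^\gamma(\sigma^{-1}\w\cdot x)^k\F^{-1}(\eta_{\w,\sigma})(x)|\lesssim\sigma^{-(3n+1)/4}$ uniformly by moving $(\sigma^{-1/2}\partial_{\xi'})^\gamma(\sigma^{-1}\w\cdot\partial_\xi)^k$ onto $\eta_{\w,\sigma}$ and invoking \eqref{eq:boundspsi}, and then one converts the polynomial weight in $\sigma^{-1/2}x'$ and $\sigma^{-1}\w\cdot x$ into the stated form. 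The main obstacle is precisely this anisotropic integration-by-parts bookkeeping — keeping the powers of $\sigma$ and the distinction between radial and tangential variables straight — but it is entirely routine given the derivative bounds from \eqref{eq:boundspsi}; since the lemma is quoted from \cite[Lemma 3.2]{Rozendaal21}, I would in fact just cite that source and indicate these as the ingredients.
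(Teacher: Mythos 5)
The paper does not actually prove this lemma: it is quoted as \cite[Lemma 3.2]{Rozendaal21}, which is precisely what you propose to do at the end, so at the level of what the paper itself provides your answer is in agreement. The support argument and the integration-by-parts scheme for \eqref{eq:boundspsiinverse} you sketch are also essentially correct (the remark about anisotropic bookkeeping and the square appearing via $(1+\sigma^{-1/2}|x'|)^{2N}\gtrsim (1+\sigma^{-1}|x'|^{2})^{N}$ is the right mechanism).

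However, the bookkeeping in your sketch of the derivative bound \eqref{eq:boundspsi} contains a genuine sign error that would produce the wrong exponent if carried through. You write that differentiating $\ph\bigl((\hat\xi-\w)/\sqrt\sigma\bigr)$ "costs $\sigma^{-1/2}$ per derivative in any direction", and only allow for the extra factor $|\xi|^{-1}\sim\sigma$ in the radial direction. In fact $\hat\xi=\xi/|\xi|$ is homogeneous of degree $0$, so $\partial_{\xi_{j}}\hat\xi = O(|\xi|^{-1})\sim\sigma$ in \emph{every} direction, not just the radial one; a single derivative of the angular factor is therefore $\sigma^{-1/2}\cdot\sigma=\sigma^{1/2}$, a gain, not a loss. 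With your stated premise the worst Leibniz term would be $c_\sigma\sigma^{-|\alpha|/2}\sim\sigma^{-(n-1)/4-|\alpha|/2}$, which contradicts the exponent $\sigma^{-(n-1)/4+|\alpha|/2}$ you then write down; you reach the correct conclusion only because the algebra in that line is inconsistent with the premise. Relatedly, "$\sigma^{-|\alpha|/2}$ from the angular factor and $\sigma^{|\alpha|}$ from $\Psi$\ldots are comparable on the annulus" is not true for $|\alpha|>0$. The special role of $\w\cdot\partial_\xi$ is that it picks up yet another factor $|\hat\xi-\w|\lesssim\sqrt\sigma$ beyond the generic $|\xi|^{-1}$ gain (since $\w\cdot\partial_{\xi}\hat{\xi}$ vanishes to first order in $|\hat\xi-\w|$), which is what yields the stronger gain $\sigma^{\beta}$ per radial derivative, versus $\sigma^{|\alpha|/2}$ per generic one. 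For $\theta_{\w,\sigma}$ your plan of invoking property \eqref{it:phiproperties2} of $\ph_\w$ directly and then Leibniz against $\Psi(\sigma\cdot)$ is clean and correct.
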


\begin{remark}\label{rem:thetatilde}
Let $\wt{\Psi}\in C^{\infty}_{c}(\Rn)$ be such that $\wt{\Psi}(\xi)=0$ for $|\xi|\notin [\frac{1}{4},4]$, and $\wt{\Psi}\equiv 1$ on $\supp(\Psi)$. For $\w\in S^{n-1}$, $\sigma>0$ and $\xi\in\Rn$, set
\[
\tilde{\theta}_{\w,\sigma}(\xi):=\sigma^{\frac{n-1}{4}}m(\xi)\ph_{\w}(\xi)\wt{\Psi}(\sigma\xi),
\]
where $m$ is as in \eqref{eq:phiproperties3}.
Then the $\tilde{\theta}_{\w,\sigma}$ have the same properties as in Lemma \ref{lem:packetbounds}, with the exception that each $\xi\in\supp(\tilde{\theta}_{\w,\sigma})$ satisfies $\frac{1}{4}\sigma^{-1}\leq |\xi|\leq 4\sigma^{-1}$. More generally, \eqref{eq:boundspsi} and \eqref{eq:boundspsiinverse} still hold if one imposes slightly different support conditions on the $\eta_{\w,\sigma}$.
\end{remark}


We now define transforms associated with these wave packets. For $f\in \Sw'(\Rn)$ and $(x,\w,\sigma)\in\Spp$, set
\begin{equation}\label{eq:defW}
Wf(x,\w,\sigma):=\begin{cases}\psi_{\w,\sigma}(D)f(x)&\text{if }0<\sigma<1,\\
\ind_{[1,e]}(\sigma)\rho(D)f(x)&\text{if }\sigma\geq1.\end{cases}
\end{equation}
Moreover, for $G$ an element of the class $\J(\Spp)$ from \eqref{eq:classJ}, and for $x\in\Rn$, set
\[
VG(x):=\int_{0}^{1}\int_{S^{n-1}}\psi_{\nu,\tau}(D)G(\cdot,\nu,\tau)(x)\ud\nu\frac{\ud\tau}{\tau}+\int_{1}^{e}\int_{S^{n-1}}\rho(D)G(\cdot,\nu,\tau)(x)\ud \nu\frac{\ud\tau}{\tau}.
\]
The following lemma, essentially contained in \cite[Proposition 4.3]{HaPoRo20}, collects some basic properties of $W$ and $V$.

\begin{proposition}\label{prop:transforms}
The following statements hold:
\begin{enumerate}
\item\label{it:transforms1} $W:\Sw(\Rn)\to \J(\Spp)$ and $W:\Sw'(\Rn)\to \J'(\Spp)$ are continuous;
\item\label{it:transforms2} $W:L^{2}(\Rn)\to L^{2}(\Spp)$ is an isometry;
\item\label{it:transforms3} $V:\J(\Spp)\to \Sw(\Rn)$ is continuous;
\item\label{it:transforms4} $\lb f,VG\rb_{\Rn}=\lb Wf,G\rb_{\Spp}$ for all $f\in\Sw'(\Rn)$ and $G\in\J(\Spp)$. 
\end{enumerate}
\end{proposition}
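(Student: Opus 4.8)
The plan is to establish the four properties of $W$ and $V$ essentially by reducing to known facts about Fourier multipliers, Littlewood--Paley theory, and the resolution of identity encoded in \eqref{eq:Psi} and the definition of $\rho$. The starting observation is that, by construction, for every $\xi\neq 0$ one has the pointwise identity
\[
\int_{0}^{1}\int_{S^{n-1}}\psi_{\nu,\tau}(\xi)^{2}\ud\nu\frac{\ud\tau}{\tau}+\rho(\xi)^{2}=1,
\]
because $\int_{S^{n-1}}c_{\tau}^{2}\ph(\tfrac{\hat\xi-\nu}{\sqrt\tau})^{2}\ud\nu=1$ for each fixed $\tau$ by the choice of $c_{\tau}$, and then the $\tau$-integral of $\Psi(\tau\xi)^{2}\frac{\ud\tau}{\tau}$ over $(0,1)$ together with $\rho(\xi)^{2}$ sums to $1$ by the definition of $\rho$; for $\xi$ in a neighborhood of $0$ only the $\rho$ term survives and equals $1$. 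This Calder\'on-type reproducing formula is the engine behind \eqref{it:transforms2} and \eqref{it:transforms4}.

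For \eqref{it:transforms1} and \eqref{it:transforms3}, I would argue directly with the kernel bounds. The Schwartz-to-$\J(\Spp)$ continuity of $W$ follows from Lemma \ref{lem:packetbounds}: the convolution kernel $\F^{-1}(\psi_{\w,\sigma})$ satisfies \eqref{eq:boundspsiinverse}, so $\psi_{\w,\sigma}(D)f(x)$ is controlled, for $f\in\Sw(\Rn)$, by any desired power of $(1+|x|+\max(\sigma,\sigma^{-1}))^{-1}$ after integrating the rapidly decaying kernel against a Schwartz function and using that $\psi_{\w,\sigma}$ is frequency-supported in an annulus of size $\sigma^{-1}$; the high-$\sigma$ piece involving $\rho(D)$ is trivial since it is supported in $\sigma\in[1,e]$ and $\rho(D)f\in\Sw(\Rn)$. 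Dualizing these estimates gives the $\Sw'\to\J'$ continuity in \eqref{it:transforms1}. For $V$ in \eqref{it:transforms3}, one writes $VG$ as a superposition over $(\nu,\tau)$ of $\psi_{\nu,\tau}(D)G(\cdot,\nu,\tau)$ plus the bounded-$\tau$ remainder, and uses the rapid decay \eqref{eq:classJ} of $G$ in $x$ and in $\max(\tau,\tau^{-1})$ against the kernel bounds \eqref{eq:boundspsiinverse} to see that $VG$ and all its derivatives decay rapidly; the $\tau$-integral converges because the $G$-weights beat the polynomial growth of the kernel bounds in $\tau^{-1}$.

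Property \eqref{it:transforms4} is then a Fubini computation: for $f\in\Sw'(\Rn)$ and $G\in\J(\Spp)$, using that each $\psi_{\nu,\tau}$ and $\rho$ is real-valued and radial (hence $\psi_{\nu,\tau}(D)$ and $\rho(D)$ are formally self-adjoint Fourier multipliers), one has
\[
\lb Wf,G\rb_{\Spp}=\int_{0}^{1}\!\!\int_{S^{n-1}}\!\!\lb \psi_{\nu,\tau}(D)f,G(\cdot,\nu,\tau)\rb_{\Rn}\ud\nu\frac{\ud\tau}{\tau}+\int_{1}^{e}\!\!\int_{S^{n-1}}\!\!\lb \rho(D)f,G(\cdot,\nu,\tau)\rb_{\Rn}\ud\nu\frac{\ud\tau}{\tau},
\]
and moving the multiplier onto $G$ and interchanging the $\Rn$-pairing with the $(\nu,\tau)$-integration yields exactly $\lb f,VG\rb_{\Rn}$; the interchange is justified because $(\nu,\tau)\mapsto \psi_{\nu,\tau}(D)G(\cdot,\nu,\tau)$ is a continuous $\Sw(\Rn)$-valued map with the requisite integrability, as established in proving \eqref{it:transforms3}. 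Finally \eqref{it:transforms2} follows from \eqref{it:transforms4} together with the pointwise identity above: taking $G=Wf$ (after a density/truncation argument to stay in the test class when $f\in L^{2}$), one gets $\|Wf\|_{L^{2}(\Spp)}^{2}=\lb Wf,Wf\rb_{\Spp}=\lb f,V(Wf)\rb_{\Rn}$, and $V\circ W$ acts on the Fourier side as multiplication by $\int_{0}^{1}\int_{S^{n-1}}\psi_{\nu,\tau}(\xi)^{2}\ud\nu\frac{\ud\tau}{\tau}+\rho(\xi)^{2}=1$, so $V(Wf)=f$ and $\|Wf\|_{L^{2}(\Spp)}=\|f\|_{L^{2}(\Rn)}$; alternatively one invokes Plancherel and Fubini directly. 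I expect the main obstacle to be the bookkeeping in \eqref{it:transforms1} and \eqref{it:transforms3}: one must carefully track how the annular frequency support of width $\sigma^{-1}$, the angular localization of width $\sqrt\sigma$, and the kernel decay \eqref{eq:boundspsiinverse} combine to produce the precise weights $(1+|x|+\max(\sigma,\sigma^{-1}))^{N}$ defining $\J(\Spp)$, uniformly in $\w$; but since this is already carried out in \cite[Proposition 4.3]{HaPoRo20}, I would largely cite that and only indicate the (harmless) modifications needed for our slightly adjusted conventions.
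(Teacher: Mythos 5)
The paper offers no proof of this result beyond the citation ``essentially contained in \cite[Proposition 4.3]{HaPoRo20},'' so your reconstruction of the argument --- via the Calder\'on reproducing formula $\int_0^1\int_{S^{n-1}}\psi_{\nu,\tau}(\xi)^2\,d\nu\,\frac{d\tau}{\tau}+\rho(\xi)^2\equiv 1$, the kernel bounds of Lemma~\ref{lem:packetbounds} for items \eqref{it:transforms1} and \eqref{it:transforms3}, and Plancherel plus Fubini for \eqref{it:transforms2} and \eqref{it:transforms4} --- is the natural one and matches what the cited proof does. The reproducing formula is correctly derived from the normalization of $c_\tau$ and the definition of $\rho$.

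However, your justification of \eqref{it:transforms4} contains a genuine error: $\psi_{\nu,\tau}$ is real-valued but it is \emph{not} radial, and not even. Indeed $\psi_{\nu,\tau}(\xi)=\Psi(\tau\xi)\,c_\tau\,\varphi\big(\tfrac{\hat\xi-\nu}{\sqrt\tau}\big)$ depends on the direction of $\xi$, and one checks that $\psi_{\nu,\tau}(-\xi)=\psi_{-\nu,\tau}(\xi)\neq\psi_{\nu,\tau}(\xi)$ in general. This matters because the $\Rn$-pairing is fixed as \emph{bilinear} in Section~\ref{subsec:notation}, and with the bilinear pairing $\lb u,v\rb_{\Rn}=\int uv$ the transpose of a Fourier multiplier $m(D)$ is $m(-D)$, not $m(D)$; a real symbol gives self-adjointness only for the sesquilinear inner product. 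So ``moving the multiplier onto $G$'' produces $\psi_{\nu,\tau}(-D)G(\cdot,\nu,\tau)=\psi_{-\nu,\tau}(D)G(\cdot,\nu,\tau)$, and after the substitution $\nu\mapsto -\nu$ your Fubini computation gives $\lb f,V\widetilde G\rb_{\Rn}$ with $\widetilde G(x,\nu,\tau):=G(x,-\nu,\tau)$ rather than $\lb f,VG\rb_{\Rn}$. If instead the pairing in \eqref{it:transforms4} is read sesquilinearly (conjugating $G$), then real-valuedness alone closes the argument and radiality is neither true nor needed. You should replace ``radial'' by ``real-valued,'' make the pairing convention explicit, and if the bilinear form is intended, either carry the $\nu\mapsto-\nu$ reflection through consistently or explain why it drops out. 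One further small point: deriving \eqref{it:transforms2} by plugging $G=Wf$ into \eqref{it:transforms4} and then invoking $VW=\mathrm{id}$ is circular in flavor; the direct Plancherel--Fubini computation you mention parenthetically is the clean route and should be the primary one.
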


We may thus extend $V$ to a continuous map $V:\J'(\Spp)\to\Sw'(\Rn)$ by setting
\[
\lb VF,g\rb_{\Rn}:=\lb F,Wg\rb_{\Spp}
\]
for $F\in\J'(\Spp)$ and $g\in\Sw(\Rn)$, and then \begin{equation}\label{eq:repro}
VWf=f
\end{equation}
for all $f\in\Sw'(\Rn)$.

Finally, we include a lemma, an extension of \cite[Lemma A.1]{HaPoRo20} to $p<1$, that will allow us to conveniently deal with the low-frequency component of $W$, given by
\begin{equation}\label{eq:defWl}
W_{l}f(x,\w,\sigma):=\ind_{[1,e]}(\sigma)\rho(D)f(x)
\end{equation}
for $f\in\Sw'(\Rn)$ and $(x,\w,\sigma)\in\Spp$. Also write
\begin{equation}\label{eq:defWh}
W_{h}f(x,\w,\sigma):=\ind_{(0,1)}(\sigma)\psi_{\w,\sigma}(D)f(x),
\end{equation}
so that $Wf=W_{l}f+W_{h}f$.

\begin{lemma}\label{lem:Wlow}
Let $0<p\leq 1$ and $s\in\R$
. Then there exists a $C>0$ such that an $f\in\Sw'(\Rn)$ satisfies $\W_{l}f\in T^{p}_{s}(\Sp)$ if and only if $\rho(D)f\in L^{p}(\Rn)$, in which case
\begin{equation}\label{eq:Wlow}
\frac{1}{C}\|W_{l}f\|_{T^{p}_{s}(\Sp)}\leq \|\rho(D)f\|_{L^{p}(\Rn)}\leq C\|W_{l}f\|_{T^{p}_{s}(\Sp)}.
\end{equation}
\end{lemma}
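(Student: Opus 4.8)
The plan is to reduce the statement to a comparison between the tent-space quasi-norm of $W_{l}f$ and the $L^{p}(\Rn)$ quasi-norm of $g:=\rho(D)f$, exploiting that $W_{l}f(x,\w,\sigma)=\ind_{[1,e]}(\sigma)g(x)$ is independent of $\w$ and supported in $\sigma\in[1,e]$. First I would note that $\supp(\wh g)\subseteq\{|\xi|\leq 2\}$, so $g$ is a smooth function of at most polynomial growth; in particular the quantities below are finite when either side is. Next I would unwind the definition of $\A_{s}$ from \eqref{eq:As}: since the $\sigma$-integral runs only over $[1,e]$, on which $\sigma^{-1-2s}\eqsim 1$, and since $|B_{\sqrt\sigma}(x,\w)|\eqsim 1$ there by Lemma \ref{lem:doubling}, we get
\[
\A_{s}(W_{l}f)(x,\w)\eqsim\Big(\int_{1}^{e}\fint_{B_{\sqrt\sigma}(x,\w)}|g(y)|^{2}\ud y\ud\nu\,\frac{\ud\sigma}{\sigma}\Big)^{1/2}\eqsim\Big(\fint_{B_{\sqrt e}(x,\w)}|g(y)|^{2}\ud y\ud\nu\Big)^{1/2},
\]
with implicit constants depending only on $n,s,p$. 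Writing $\pi:\Sp\to\Rn$ for the projection, the right-hand side is comparable to $\big(M_{2}g\big)$ evaluated on a ball of bounded radius around $x$; more precisely, it is $\eqsim\big(\fint_{\pi(B_{\sqrt e}(x,\w))}|g|^{2}\big)^{1/2}$, a quantity that depends on $x$ only through a fixed-radius Euclidean ball around $x$ (uniformly in $\w$).

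For the upper bound $\|W_{l}f\|_{T^{p}_{s}(\Sp)}\lesssim\|g\|_{L^{p}(\Rn)}$, I would use the local reproducing property: since $g$ has Fourier support in $\{|\xi|\leq2\}$, one has $g=\widetilde\rho(D)g$ for a fixed Schwartz function $\widetilde\rho$, hence $|g(y)|\lesssim\int_{\Rn}(1+|y-z|)^{-M}|g(z)|\ud z$ for any $M$, and therefore $\big(\fint_{B(x,C)}|g|^{2}\big)^{1/2}\lesssim\big(\M g\big)(x)$ pointwise, where $\M$ is the Hardy–Littlewood maximal operator on $\Rn$ (this is the standard argument bounding a local square/maximal quantity of a band-limited function by the maximal function; it works for all $p>0$ because of the arbitrarily fast polynomial decay). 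Actually, to keep $p<1$ manageable I would use instead the sharper bound $\big(\fint_{B(x,C)}|g|^{2}\big)^{1/2}\lesssim\big(\M_{r}g\big)(x)$ for some $r<p$, where $\M_{r}h:=(\M(|h|^{r}))^{1/r}$; since $\M$ is bounded on $L^{p/r}(\Rn)$ with $p/r>1$, we get $\|\M_{r}g\|_{L^{p}(\Rn)}\lesssim\|g\|_{L^{p}(\Rn)}$. Combining with the pointwise comparison for $\A_{s}(W_{l}f)$ and the fact that the map $\Sp\ni(x,\w)\mapsto(\M_{r}g)(x)$ has $L^{p}(\Sp)$ norm equal (up to the finite measure of $S^{n-1}$) to $\|\M_{r}g\|_{L^{p}(\Rn)}$, the upper bound follows.

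For the lower bound $\|g\|_{L^{p}(\Rn)}\lesssim\|W_{l}f\|_{T^{p}_{s}(\Sp)}$, I would go the other direction: the Lebesgue differentiation theorem gives $|g(x)|^{2}\leq\liminf_{\tau\downarrow0}\fint_{B(x,\tau)}|g|^{2}$ for a.e.\ $x$, but here we cannot shrink $\tau$ since $\sigma$ is bounded below by $1$; instead I would again use that $g$ is band-limited, so $g$ varies slowly on unit scales and $|g(x)|\lesssim\big(\fint_{B(x,C)}|g|^{2}\big)^{1/2}$ pointwise (same band-limited argument, now used as a lower-type bound: convolve $g$ against a bump that reproduces it and apply Cauchy–Schwarz). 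Since the right-hand side is $\eqsim\A_{s}(W_{l}f)(x,\w)$ uniformly in $\w$, integrating $|g(x)|^{p}$ over $\Rn$ and then over $\w\in S^{n-1}$ (which only multiplies by $|S^{n-1}|$) yields $\|g\|_{L^{p}(\Rn)}^{p}\lesssim\int_{\Sp}\A_{s}(W_{l}f)^{p}=\|W_{l}f\|_{T^{p}_{s}(\Sp)}^{p}$. Finally, the "if and only if" is immediate: if $W_{l}f\in T^{p}_{s}(\Sp)$ then the lower bound shows $g\in L^{p}(\Rn)$, and conversely the upper bound shows $W_{l}f\in T^{p}_{s}(\Sp)$; in either case \eqref{eq:Wlow} holds.

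The main obstacle is the $p<1$ regime: one cannot simply invoke boundedness of $\M$ on $L^{p}$, so the crux is the replacement of $\M$ by $\M_{r}$ with $r<p$ in the band-limited pointwise estimate, together with checking that the polynomial-decay kernel bounds for $\F^{-1}(\widetilde\rho)$ (or for the convolution kernel reproducing $g$) are strong enough to dominate the $L^{2}$-average by $\M_{r}g$ for some $r<p$; this is exactly the kind of estimate underlying \eqref{eq:HLcontrol}, adapted to the flat setting, and it is the one place where the argument for $p<1$ genuinely differs from the $p=1$ case in \cite[Lemma A.1]{HaPoRo20}. Everything else is bookkeeping with the bounded range $\sigma\in[1,e]$ and the uniform volume estimates from Lemma \ref{lem:doubling}.
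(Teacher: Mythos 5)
Your upper bound direction ($\|W_l f\|_{T^p_s}\lesssim\|\rho(D)f\|_{L^p}$) is correct and takes a genuinely different route from the paper: you control the local $L^2$ average pointwise by the Peetre--Fefferman--Stein maximal function $\M_r g$ with $r<p$ and then use boundedness of $\M$ on $L^{p/r}(\Rn)$, whereas the paper majorizes $\ind_{B_R(x)}$ by a Schwartz function $h$ with compact Fourier support and invokes the Plancherel--P\'olya--Nikolskii inequality \cite[Theorem 1.4.1]{Triebel10} to pass from an $L^2$ to an $L^p$ integral of the band-limited function $y\mapsto h(x-y)\rho(D)f(y)$. Both are legitimate, and both correctly address the $p<1$ difficulty.

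The lower bound direction has a genuine gap. You claim the pointwise inequality $|g(x)|\lesssim(\fint_{B(x,C)}|g|^2)^{1/2}$ for band-limited $g$, justified by ``convolve $g$ against a bump that reproduces it and apply Cauchy--Schwarz''. But the reproducing kernel $K=\F^{-1}\widetilde\rho$, with $\widetilde\rho\equiv 1$ on $\supp\wh g$, is a Schwartz function with non-compact support, so Cauchy--Schwarz gives only
\[
|g(x)|^2\lesssim\int_{\Rn}|K(x-y)|\,|g(y)|^2\,\ud y,
\]
a \emph{weighted global} $L^2$ quantity whose tail over $|y-x|>C$ cannot be discarded pointwise without some global control on $g$. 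The Peetre inequality, which you correctly use in the other direction, only gives $|g(x)|\lesssim\M_r g(x)$, i.e.~a supremum over averages at \emph{all} scales, which is not the fixed-scale average you need. The paper avoids this issue entirely and more simply: since $p/2\leq 1$, Jensen's inequality for the concave function $t\mapsto t^{p/2}$ gives $(\fint_{B}|g|^2)^{p/2}\geq\fint_{B}|g|^p$, and then Fubini over $\Sp$ (using that $B(x,c)\times S^{n-1}\subseteq B_{\sqrt\sigma}(x,\w)$ for $\sigma\geq1$ and $c$ small, with all volumes $\eqsim1$) yields $\|g\|_{L^p(\Rn)}^p\lesssim\|\A_s(W_lf)\|_{L^p(\Sp)}^p$ directly. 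This argument is elementary, holds for arbitrary locally square-integrable $g$ without any band-limitedness, and is what ``relying also on Jensen's inequality'' refers to in the paper's proof; the fact that no band-limitedness is needed in this direction is a strong hint that it was the wrong ingredient to reach for here.
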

\begin{proof}
It suffices to consider $s=0$. Then the second inequality in \eqref{eq:Wlow} is proved as in the case where $p=1$, in \cite[Lemma A.1]{HaPoRo20}, relying also on Jensen's inequality. 

For the first inequality we argue slightly differently, due to subtleties concerning Sobolev embeddings for $p<1$. Let $R>0$ be large enough such that $B_{\sqrt{\sigma}}(x,\w)\subseteq B_{R}(x)\times S^{n-1}$ for all $(x,\w,\sigma)\in\Spp$ with $\sigma\leq e$, and let $h\in\Sw(\Rn)$ have compact Fourier support and be such that $|h(x)|\geq 1$ if $|x|\leq R$. Then
\begin{align*}
\|W_{l}f\|_{T^{p}(\Sp)}&\leq \Big(\int_{\Sp}\Big(\int_{1}^{e}\int_{S^{n-1}}\int_{B_{R}(x)}|\rho(D)f(y)|^{2}\ud y\ud\nu\frac{\ud\sigma}{\sigma}\Big)^{p/2}\ud x\ud\w\Big)^{1/p}\\
&\leq \Big(\int_{\Rn}\Big(\int_{\Rn}|h(x-y)\rho(D)f(y)|^{2}\ud y\Big)^{p/2}\ud x\Big)^{1/p}.
\end{align*}
Now note that the function $y\mapsto h(x-y)\rho(D)f(y)$ has compact Fourier support, independent of $x$. Hence \cite[Theorem 1.4.1]{Triebel10} yields
\[
\|W_{l}f\|_{T^{p}(\Sp)}\lesssim \Big(\int_{\Rn}\int_{\Rn}|h(x-y)\rho(D)f(y)|^{p}\ud x\ud y\Big)^{1/p}\lesssim \|\rho(D)f\|_{L^{p}(\Rn)}.\qedhere
\]
\end{proof}

\subsection{Operators on phase space}\label{subsec:operatorphase}

By conjugating with wave packet transforms, we will often reduce the analysis of operators on $\Rn$ to operators on $\Spp$. In this subsection we collect some results about the operators that arise in this manner. 

We consider operators $S:\J(\Spp)\to\J'(\Spp)$ given by a measurable kernel $K:\Spp\times\Spp\to \C$:
\begin{equation}\label{eq:kernelphase}
SF(x,\w,\sigma)=\int_{\Spp}K(x,\w,\sigma,y,\nu,\tau)F(y,\nu,\tau)\ud y\ud\nu\frac{\ud\tau}{\tau}
\end{equation}
for $F\in \J(\Spp)$ and $(x,\w,\sigma)\in\Spp$. Our main result for such operators is as follows.

\begin{proposition}\label{prop:offsing}
Let $0<p\leq \infty$ and $s\in\R$. Then there exists an $N\geq0$ such that the following holds. Let $S:\J(\Spp)\to\J'(\Spp)$ be as in \eqref{eq:kernelphase}, and suppose that there exist a bi-Lipschitz $\hat{\chi}:\Sp\to\Sp$ and a $C\geq0$ such that
\[
|K(x,\w,\sigma,y,\nu,\tau)|\leq C\zeta^{n}\min\big(\tfrac{\sigma}{\tau},\tfrac{\tau}{\sigma}\big)^{N}(1+\zeta d((x,\w),\hat{\chi}(y,\nu))^{2})^{-N}
\]
for all $(x,\w,\sigma),(y,\nu,\tau)\in\Spp$, where we write $\zeta:=\max(\sigma^{-1},\tau^{-1})$.
Then $S\in\La(T^{p}_{s}(\Sp))$. 
\end{proposition}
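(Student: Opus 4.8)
The plan is to establish the two endpoint cases $p=2$ and $0<p\le 1$ by hand, and then obtain the intermediate ranges by complex interpolation and duality. Throughout we may reduce to $s=0$: since $F\mapsto\big((x,\w,\sigma)\mapsto\sigma^{s}F(x,\w,\sigma)\big)$ is an isometric isomorphism of $T^{p}(\Sp)$ onto $T^{p}_{s}(\Sp)$, conjugating $S$ by it produces an operator of the same type whose kernel is $(\tau/\sigma)^{s}K$, and since $(\tau/\sigma)^{s}\min(\sigma/\tau,\tau/\sigma)^{N}\le\min(\sigma/\tau,\tau/\sigma)^{N-|s|}$, after enlarging $N$ it suffices to prove $S\in\La(T^{p}(\Sp))$ for operators with a kernel bound as in the statement. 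For $p=2$ we have $T^{2}(\Sp)=L^{2}(\Spp)$ isometrically, so it is enough to verify a Schur test on $L^{2}(\Spp)$. Fixing $(x,\w,\sigma)$ and integrating the kernel bound first in $(y,\nu)$, the bi-Lipschitz change of variables $(y,\nu)\mapsto\hat{\chi}(y,\nu)$ has bounded Jacobian, after which $\int_{\Sp}\zeta^{n}\big(1+\zeta\, d((x,\w),(u,v))^{2}\big)^{-N}\ud u\,\ud v\lesssim 1$ uniformly in $\zeta$ by \eqref{eq:HLcontrol} applied to $g\equiv 1$, provided $N>n$. Integrating next in $\tau$ costs $\int_{0}^{\infty}\min(\sigma/\tau,\tau/\sigma)^{N}\tfrac{\ud\tau}{\tau}=2/N<\infty$, and the transposed sum is bounded by the same computation, now using that $\hat{\chi}(y,\nu)$ is a fixed point when one integrates in $(x,\w)$. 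Hence $S\in\La(L^{2}(\Spp))=\La(T^{2}(\Sp))$.

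For $0<p\le 1$ I would invoke the atomic decomposition of Proposition \ref{prop:atomictent}: since $S\in\La(T^{2}(\Sp))$, it suffices to show $\|SA\|_{T^{p}(\Sp)}\lesssim 1$ uniformly over all $T^{p}(\Sp)$ atoms $A$. Let $A$ be associated with a ball $B=B_{r}(x_{B},\w_{B})$, so $\supp(A)\subseteq T(B)$, $\tau\le r^{2}$ on $\supp(A)$, and $\|A\|_{L^{2}(\Spp)}\le|B|^{-(1/p-1/2)}$; since $\hat{\chi}$ is bi-Lipschitz, $\hat{\chi}(B)$ lies in a ball $B^{\sharp}$ centered at $\hat{\chi}(x_{B},\w_{B})$ of radius comparable to $r$, with $|B^{\sharp}|\eqsim|B|$. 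Split $\|\A_{0}(SA)\|_{L^{p}(\Sp)}^{p}$ over $2B^{\sharp}$ and the annuli $2^{k+1}B^{\sharp}\setminus 2^{k}B^{\sharp}$, $k\ge 1$. On $2B^{\sharp}$, H\"{o}lder's inequality with exponent $2/p$, the identity $T^{2}(\Sp)=L^{2}(\Spp)$ and the $L^{2}$-bound for $S$ give
\[
\int_{2B^{\sharp}}(\A_{0}(SA))^{p}\lesssim|B^{\sharp}|^{1-p/2}\,\|SA\|_{L^{2}(\Spp)}^{p}\lesssim|B|^{1-p/2}\,\|A\|_{L^{2}(\Spp)}^{p}\lesssim 1.
\]
On the $k$-th annulus, for $(x_{0},\w_{0})$ in the annulus, $(y,\nu)\in B_{\sqrt\sigma}(x_{0},\w_{0})$ contributing to $\A_{0}(SA)(x_{0},\w_{0})$, and $(y',\nu',\tau)\in\supp(A)$, one either has $\sqrt\sigma\lesssim 2^{k}r$, in which case $d((y,\nu),\hat{\chi}(y',\nu'))\gtrsim 2^{k}r$ and, using $\zeta\ge\tau^{-1}\gtrsim r^{-2}$, the spatial factor obeys $(1+\zeta\, d^{2})^{-N}\lesssim 2^{-2Nk}\zeta^{-N/2}r^{-N}$; or else $\sqrt\sigma\gtrsim 2^{k}r\ge\sqrt\tau$, so that $\zeta=\tau^{-1}$ and the scale factor obeys $\min(\sigma/\tau,\tau/\sigma)^{N}=(\tau/\sigma)^{N}\lesssim 2^{-2Nk}(\tau/\sigma)^{N/2}$. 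In either case one estimates $SA$ on the annulus by Cauchy--Schwarz against $\|A\|_{L^{2}(\Spp)}$, integrates in $\sigma$, then over the annulus, and sums the resulting series in $k$ against $|2^{k+1}B^{\sharp}|\lesssim 2^{2nk}|B^{\sharp}|$; for $N$ large the $r$-homogeneity cancels thanks to the normalization of $A$, and the sum converges to a constant. This yields $\|SA\|_{T^{p}(\Sp)}\lesssim 1$, and Proposition \ref{prop:atomictent} then gives $S\in\La(T^{p}(\Sp))$.

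For $1<p<2$ the claim follows from Proposition \ref{prop:tentint} by complex interpolation between the (Banach) endpoint cases $p=1$ and $p=2$. For $2<p\le\infty$ I would argue by duality. The transpose $S^{\top}$ of $S$ with respect to the pairing \eqref{eq:pairing} has kernel $(x,\w,\sigma,y,\nu,\tau)\mapsto K(y,\nu,\tau,x,\w,\sigma)$, and since $d((y,\nu),\hat{\chi}(x,\w))\gtrsim d(\hat{\chi}^{-1}(y,\nu),(x,\w))$ by the bi-Lipschitz property of $\hat{\chi}$, this kernel again satisfies a bound as in the statement, with $\hat{\chi}$ replaced by the bi-Lipschitz map $\hat{\chi}^{-1}$. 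Hence the cases already proved apply to $S^{\top}$ on $T^{p'}(\Sp)$ for $1\le p'<2$, and \eqref{eq:tentdual}, together with $(T^{1}(\Sp))^{*}=T^{\infty}(\Sp)$ for $p=\infty$, yields $S\in\La(T^{p}(\Sp))$ (here one checks, using that $\J(\Spp)$ is dense in $T^{p'}(\Sp)$ and that $S^{\top}$ maps it into $T^{p'}(\Sp)$, that the resulting operator agrees with the one given by the kernel).

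I expect the main obstacle to be the annular estimate in the case $0<p\le 1$: one must balance the spatial off-diagonal decay $(1+\zeta\, d^{2})^{-N}$ against the scale decay $\min(\sigma/\tau,\tau/\sigma)^{N}$ in the two regimes $\sqrt\sigma\lesssim 2^{k}r$ and $\sqrt\sigma\gtrsim 2^{k}r$, while tracking separately the two behaviors of the volume $|B_{\rho}(x,\w)|$ for $\rho<1$ and $\rho\ge 1$ (parabolic versus conical scaling), since no single parabolic dilation normalizes $r$. Choosing $N$ large enough to dominate $n$, $|s|$, and the dimensional exponents that appear, and verifying that the $r$-dependence genuinely cancels against the atom normalization in both volume regimes, is the other point that requires care.
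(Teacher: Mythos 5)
Your proof is essentially correct and proves the statement by a more self-contained route than the paper. The paper's own argument simply cites \cite[Theorem~3.7]{HaPoRo20} to cover $p\geq 1$ and $s=0$, observes that for $p<1$ the same atomic-decomposition proof goes through verbatim (using Proposition~\ref{prop:atomictent} and Lemma~\ref{lem:distributions}), obtains general $s$ by exactly the same $(\tau/\sigma)^{s}$ reweighting trick that you use at the outset, and finally dispatches $p=\infty$ by the adjoint action. Your version instead rebuilds the whole statement from scratch: a Schur test at $p=2$ via the bi-Lipschitz change of variables and \eqref{eq:HLcontrol}; the atomic argument for $0<p\le 1$ (which is the same mechanism hidden in the cited \cite[Theorem~3.7]{HaPoRo20}); complex interpolation (Proposition~\ref{prop:tentint}) to fill in $1<p<2$; and duality for $2<p\le\infty$. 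The net effect is more work but a proof that does not lean on the reference. On one small point you are actually more careful than the paper: the kernel hypothesis is not literally symmetric in $(x,\w,\sigma)$ and $(y,\nu,\tau)$ because of $\hat{\chi}$, and you correctly observe that the transposed kernel satisfies the same class of bound with $\hat{\chi}$ replaced by the bi-Lipschitz map $\hat{\chi}^{-1}$; the paper's ``symmetric'' phrasing elides this.

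Two minor points worth fixing in the annulus estimate. First, in the regime $\sqrt{\sigma}\gtrsim 2^{k}r\geq\sqrt{\tau}$ the inequality should read $(\tau/\sigma)^{N}\lesssim 2^{-kN}(\tau/\sigma)^{N/2}$ rather than $2^{-2Nk}(\tau/\sigma)^{N/2}$, since $(\tau/\sigma)^{N/2}\lesssim (r^{2}/(2^{2k}r^{2}))^{N/2}=2^{-kN}$; the geometric decay in $k$ is still there, just with a different constant in the exponent. Second, you should note explicitly (as you implicitly do in your comment at the end) that the change of variables $(y,\nu)\mapsto\hat{\chi}(y,\nu)$ has a ``bounded Jacobian'' only in the sense that the pushforward of $\ud y\,\ud\nu$ is comparable to $\ud y\,\ud\nu$, which follows from bi-Lipschitzness in the metric $d$ together with Lemma~\ref{lem:doubling}; this is enough for the Schur test but is not the smooth-Jacobian statement the phrasing might suggest.
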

\begin{proof}
The statement is contained in \cite[Theorem 3.7]{HaPoRo20} for $p\geq1$ and $s=0$. 

For $p<1$ and $s=0$, the proof of the boundedness of $S$ is completely analogous to that given in \cite[Theorem 3.7]{HaPoRo20} for $p=1$, using the atomic decomposition of $T^{p}(\Sp)$ from Proposition \ref{prop:atomictent}. By Lemma \ref{lem:distributions}, $S$ then extends uniquely to a bounded operator on all of $T^{p}(\Sp)$. 

As in the proof of \cite[Proposition 2.4]{Hassell-Rozendaal23}, the result for $p<\infty$ and general $s\in\R$ then follows by considering the kernel 
\[
\wt{K}(x,\w,\sigma,y,\nu,\tau):=\big(\tfrac{\tau}{\sigma}\big)^{s}K(x,\w,\sigma,y,\nu,\tau),
\]
which satisfies similar bounds. Finally, for $p=\infty$, 
simply consider the adjoint action of $S$, as is allowed due to \eqref{eq:tentdual} and due to the fact that the assumption on the kernel is symmetric with respect to the variables $(x,\w,\sigma)$ and $(y,\nu,\tau)$.
\end{proof}



A specific instance to which Proposition \ref{prop:offsing} applies is the case where an operator as in Definition \ref{def:operator} is conjugated with the wave packet transforms from the previous subsection.

\begin{corollary}\label{cor:FIOtent}
Let $T$ be a Fourier integral operator of order $0$ { and type (1/2,1/2,1)} in standard form, associated with a global canonical graph, with symbol $a\in S^{0}_{1/2,1/2,1}$ and phase function $\Phi\in C^{\infty}(\R^{2n}\setminus o)$. Suppose that either $(x,\eta)\mapsto \Phi(x,\eta)$ is linear in $\eta$, or that there exists a $c>0$ such that $a(x,\eta)=0$ for all $(x,\eta)\in\R^{2n}$ with $|\eta|\leq c$. Then $WTV\in \La(T^{p}_{s}(\Sp))$ for all $0<p<\infty$ and $s\in\R$. 
\end{corollary}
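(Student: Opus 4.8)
The plan is to reduce the statement to Proposition \ref{prop:offsing} by showing that the kernel of the operator $S := WTV$ on $\Spp$ satisfies the required pointwise bounds, with $\hat\chi$ given by the canonical transformation associated with $T$. First I would write out the kernel explicitly. Using the reproducing formula \eqref{eq:repro} and the definitions of $W$ and $V$, one has
\[
SF(x,\w,\sigma)=\int_{\Spp}K(x,\w,\sigma,y,\nu,\tau)F(y,\nu,\tau)\ud y\ud\nu\frac{\ud\tau}{\tau},
\]
where $K$ is obtained by composing the wave packet $\psi_{\w,\sigma}(D)$ (or $\rho(D)$ when $\sigma\geq 1$), the oscillatory integral operator $T$, and the synthesis wave packet $\psi_{\nu,\tau}(D)$ (or $\rho(D)$). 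Concretely, for $\sigma,\tau<1$,
\[
K(x,\w,\sigma,y,\nu,\tau)=\int_{\Rn}e^{i\Phi(z,\eta)}a(z,\eta)\,\F^{-1}(\psi_{\w,\sigma})(x-z)\,\psi_{\nu,\tau}(\eta)\,e^{-iy\cdot\eta}\ud z\ud\eta,
\]
so the key analytic task is a stationary/non-stationary phase analysis of this double integral.

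Next I would carry out the kernel estimate. Because $\psi_{\nu,\tau}$ localizes $\eta$ to the region $|\eta|\sim\tau^{-1}$, $|\hat\eta-\nu|\lesssim\sqrt\tau$, and $\F^{-1}(\psi_{\w,\sigma})$ localizes $x-z$ to a parabolic region of dimensions $\sqrt\sigma\times\sigma$ around $0$ (by \eqref{eq:boundspsiinverse}), the $z$-integral is essentially a localization of $T$ to this small set. The phase $\Phi(z,\eta)$, being homogeneous of degree one in $\eta$ with $\det\partial^2_{x\eta}\Phi$ bounded away from zero (conditions \eqref{it:phase2}--\eqref{it:phase4} of Definition \ref{def:operator}), can be linearized in $\eta$ around the direction $\nu$: one writes $\Phi(z,\eta)\approx\partial_\eta\Phi(z,\nu)\cdot\eta$ plus a controlled error, and integration by parts in $z$ and $\eta$ (using the symbol bounds on $a\in S^0_{1/2,1/2,1}$ and the derivative bounds \eqref{eq:boundspsi}) produces the rapid decay $(1+\zeta d((x,\w),\hat\chi(y,\nu))^2)^{-N}$, where $\hat\chi$ maps $(y,\nu)$ to the pair consisting of $\partial_\eta\Phi$ evaluated appropriately and the pushed-forward codirection — i.e.\ the canonical transformation of Remark \ref{rem:oscint}, which is bi-Lipschitz by conditions \eqref{it:phase2}--\eqref{it:phase4}. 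The factor $\min(\sigma/\tau,\tau/\sigma)^N$ arises from the frequency-support mismatch: $\psi_{\w,\sigma}(D)$ outputs frequencies $\sim\sigma^{-1}$ while $\psi_{\nu,\tau}(D)$ inputs frequencies $\sim\tau^{-1}$, and since $T$ is a Fourier multiplier-type operator composed with a diffeomorphism on the base, its frequency behavior is essentially diagonal, so the off-diagonal-in-$\sigma/\tau$ contribution is rapidly decaying — this is standard almost-orthogonality. For the low-frequency pieces involving $\rho(D)$, one uses instead the alternative hypothesis: either $\Phi$ is linear in $\eta$ (so $T$ is a change of variables composed with a pseudodifferential operator, handled directly), or $a$ vanishes for $|\eta|\leq c$ (so the $\rho(D)$ outputs are zero or the relevant compositions vanish), and the remaining mixed low/high terms are again controlled by the frequency mismatch factor. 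This kernel analysis is essentially \cite[Section 4]{HaPoRo20}, and since the estimates in Proposition \ref{prop:offsing} and Definition \ref{def:operator} are stated for the global (non-compactly-supported) setting, no localization argument is needed.

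Finally, once the kernel bound is verified, Proposition \ref{prop:offsing} immediately gives $S=WTV\in\La(T^p_s(\Sp))$ for all $0<p<\infty$ and $s\in\R$, which is the claim. I expect the main obstacle to be the careful bookkeeping in the phase-space kernel estimate — in particular, tracking how the parabolic localizations of the two wave packets interact with the linearization of $\Phi$ to produce exactly the metric $d$ on $\Sp$ in the decay factor, and ensuring the constants and the exponent $N$ are uniform. However, this is genuinely a matter of transcribing and lightly adapting the arguments of \cite{HaPoRo20}, which were carried out for the same class of operators; the passage to $p<1$ introduces nothing new at the level of the kernel, since Proposition \ref{prop:offsing} has already been extended to the full range $0<p\leq\infty$. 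The two-case hypothesis on $(\Phi,a)$ is exactly what is needed to handle the low-frequency transform component $W_l$, for which the parabolic structure degenerates.
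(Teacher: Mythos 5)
Your proposal is correct and takes essentially the same route as the paper: the paper's own proof consists of noting that $WTV:\J(\Spp)\to\J'(\Spp)$ is a priori defined by Proposition \ref{prop:transforms}, and then simply combining Proposition \ref{prop:offsing} (now valid for all $0<p\leq\infty$) with the kernel estimates already established in \cite[Lemma 2.13 and Corollary 5.2]{HaPoRo20}. You sketch those kernel estimates in some detail rather than just citing them, but you correctly identify that they are contained in \cite{HaPoRo20} and that the passage to $p<1$ adds nothing at the kernel level — only the extension of Proposition \ref{prop:offsing} to $p<1$ is new, which is exactly how the paper argues.
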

\begin{proof}
By Proposition \ref{prop:transforms}, $WTV:\J(\Spp)\to\J'(\Spp)$ is a priori well defined. Hence the conclusion follows by combining Proposition \ref{prop:offsing} with \cite[Lemma 2.13 and Corollary 5.2]{HaPoRo20}.
\end{proof}

\begin{remark}\label{rem:FIOtentother}
We note that, for a given $T$, the proof of \cite[Corollary 5.2]{HaPoRo20} only makes use of the properties of the wave packets stated in Lemma \ref{lem:packetbounds}. In fact, given any collection of wave packets with these properties, one can define a transform $\wt{W}$ in the same manner as before. Then $\wt{W}TV\in \La(T^{p}_{s}(\Sp))$ in Corollary \ref{cor:FIOtent}.
\end{remark}

\section{Hardy spaces for Fourier integral operators}\label{sec:HpFIO}

In this section we introduce the Hardy spaces for Fourier integral operators $\Hp$ for all $0<p\leq \infty$, and we derive their basic properties.

\subsection{Definition and basic properties}\label{subsec:defHpFIO}

To define $\Hp$, 
we make use of the wave packet transform $W$ from \eqref{eq:defW} and the tent spaces $T^{p}(\Sp)$ from Definition \ref{def:tentspace}.

\begin{definition}\label{def:HpFIO}
Let $0<p\leq\infty$. Then $\Hp$ consists of those $f\in\Sw'(\Rn)$ such that $Wf\in T^{p}(\Sp)$, endowed with the quasi-norm
\[
\|f\|_{\Hp}:=\|Wf\|_{T^{p}(\Sp)}.
\]
Moreover, for $s\in\R$, set $\Hps:=\lb D\rb^{-s}\Hp$. 
\end{definition}

It follows from \eqref{eq:repro} that $\|\cdot\|_{\Hp}$ is indeed a quasi-norm for all $0<p\leq\infty$, and a norm for $p\geq1$.


The following proposition will allow us to reduce various properties of $\Hps$ to those of the weighted tent spaces $T^{p}_{s}(\Sp)$ from Section \ref{subsec:tent}.

\begin{proposition}\label{prop:HpFIOtent}
Let $0<p\leq \infty$ and $s\in\R$. Then there exists a $C>0$ such that the following holds. An $f\in\Sw'(\Rn)$ satisfies $f\in\Hps$ if and only if $Wf\in T^{p}_{s}(\Sp)$, and 
\begin{equation}\label{eq:HpFIOtent}
\frac{1}{C}\|Wf\|_{T^{p}_{s}(\Sp)}\leq \|f\|_{\Hps}\leq C\|Wf\|_{T^{p}_{s}(\Sp)}.
\end{equation}
Moreover, $V:T^{p}_{s}(\Sp)\to \Hps$ is bounded and surjective. 
\end{proposition}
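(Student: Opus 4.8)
The plan is to reduce everything to facts about the wave packet transforms $W$, $V$ and the weighted tent spaces $T^{p}_{s}(\Sp)$, exploiting the isometric isomorphism $F\mapsto(\sigma^{s}F)$ between $T^{p}(\Sp)$ and $T^{p}_{s}(\Sp)$ recorded in Section \ref{subsec:tent}, together with the reproducing identity $VWf=f$ from \eqref{eq:repro} and the mapping properties of $W$ and $V$ from Proposition \ref{prop:transforms}.

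First I would prove the two-sided estimate \eqref{eq:HpFIOtent}. Recall $\lb D\rb^{-s}$ is a classical Fourier multiplier, so by Definition \ref{def:HpFIO} one has $f\in\Hps$ iff $\lb D\rb^{s}f\in\Hp$ iff $W(\lb D\rb^{s}f)\in T^{p}(\Sp)$, with $\|f\|_{\Hps}=\|W(\lb D\rb^{s}f)\|_{T^{p}(\Sp)}$. The key point is then to compare $W(\lb D\rb^{s}f)$ with $Wf$ on phase space: on the high-frequency part $\sigma<1$, the operator $\psi_{\w,\sigma}(D)\lb D\rb^{s}$ differs from $\sigma^{-s}\psi_{\w,\sigma}(D)$ by a Fourier multiplier of the form $\sigma^{s}\lb D\rb^{s}$ restricted to the frequency annulus $|\xi|\eqsim\sigma^{-1}$, which is harmless because there $\sigma^{s}\lb\xi\rb^{s}\eqsim 1$; on the low-frequency part $\sigma\in[1,e]$, the weight $\sigma^{2s}$ is bounded above and below. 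To make this rigorous I would either invoke Proposition \ref{prop:offsing}, by checking that $W\lb D\rb^{s}V$ minus the multiplication operator $F\mapsto\sigma^{-s}F$ (suitably interpreted) has a kernel satisfying the required off-diagonal bounds, or argue directly via Lemma \ref{lem:packetbounds} (and Remark \ref{rem:thetatilde}) that $W(\lb D\rb^{s}f)(x,\w,\sigma)\eqsim \sigma^{-s}Wf(x,\w,\sigma)$ up to a tent-space-bounded error, so that $\|W(\lb D\rb^{s}f)\|_{T^{p}(\Sp)}\eqsim\|\sigma^{-s}Wf\|_{T^{p}(\Sp)}=\|Wf\|_{T^{p}_{s}(\Sp)}$; this last equality is exactly the isometry between $T^{p}(\Sp)$ and $T^{p}_{s}(\Sp)$. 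Either way, combined with the fact that the isometry $T^{p}\to T^{p}_{s}$ of Section \ref{subsec:tent} sends $W(\lb D\rb^{s}f)$ essentially to $Wf$, one obtains \eqref{eq:HpFIOtent}.

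Next I would treat the boundedness and surjectivity of $V:T^{p}_{s}(\Sp)\to\Hps$. For boundedness: given $F\in T^{p}_{s}(\Sp)$, by Lemma \ref{lem:distributions} $F\in\J'(\Spp)$, so $VF\in\Sw'(\Rn)$ is defined, and by \eqref{eq:HpFIOtent} it suffices to show $W(VF)\in T^{p}_{s}(\Sp)$ with $\|W(VF)\|_{T^{p}_{s}(\Sp)}\lesssim\|F\|_{T^{p}_{s}(\Sp)}$; but $WV:\J(\Spp)\to\J'(\Spp)$ is an integral operator of the type \eqref{eq:kernelphase} whose kernel satisfies the hypotheses of Proposition \ref{prop:offsing} (this is essentially \cite[Lemma 2.13]{HaPoRo20}, an explicit computation using Lemma \ref{lem:packetbounds}), hence $WV\in\La(T^{p}_{s}(\Sp))$, and so $\|W(VF)\|_{T^{p}_{s}(\Sp)}=\|WVF\|_{T^{p}_{s}(\Sp)}\lesssim\|F\|_{T^{p}_{s}(\Sp)}$. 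For surjectivity: if $f\in\Hps$ then $Wf\in T^{p}_{s}(\Sp)$ by \eqref{eq:HpFIOtent}, and $V(Wf)=f$ by \eqref{eq:repro}, so $f$ is in the range of $V$. One should also note $\|f\|_{\Hps}\eqsim\|Wf\|_{T^{p}_{s}(\Sp)}$ makes $W:\Hps\to T^{p}_{s}(\Sp)$ an (up to constants) isometric embedding with $VW=\id$, which gives the surjectivity cleanly.

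I expect the main obstacle to be the comparison $W(\lb D\rb^{s}f)\eqsim\sigma^{-s}Wf$ modulo a tent-space-bounded remainder — i.e. making precise that attaching the multiplier $\lb D\rb^{s}$ is, on phase space, the same as inserting the weight $\sigma^{-s}$. The low-frequency regime $\sigma\in[1,e]$ is trivial, but on $\sigma<1$ one must control the error operator; the cleanest route is to absorb it into Proposition \ref{prop:offsing} by verifying the kernel bounds (with the extra $\min(\sigma/\tau,\tau/\sigma)^{N}$ decay coming from the frequency-annulus disjointness and the $(1+\zeta d(\cdot,\cdot)^{2})^{-N}$ decay coming from \eqref{eq:boundspsiinverse}), which is routine but slightly delicate for $p<1$ because one must use the atomic-decomposition part of Proposition \ref{prop:atomictent} rather than duality. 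All remaining ingredients — the $T^{p}\cong T^{p}_{s}$ isometry, $VWf=f$, and $WV\in\La(T^{p}_{s}(\Sp))$ — are already available in the excerpt.
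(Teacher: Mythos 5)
Your proposal is correct and essentially the paper's argument. The paper implements your ``direct'' route in the cleanest form: it defines the modified transform $\wt{W}_{1}$ built from the wave packets $\sigma^{s}\lb\xi\rb^{s}\psi_{\w,\sigma}(\xi)$ (and $\sigma^{s}\lb\xi\rb^{s}\rho(\xi)$ for the low frequencies), observes that these satisfy the same bounds as $\psi_{\w,\sigma}$ in Lemma~\ref{lem:packetbounds}, invokes Remark~\ref{rem:FIOtentother} to get $\wt{W}_{1}V\in\La(T^{p}_{s}(\Sp))$, and then computes $\|f\|_{\Hps}=\|\wt{W}_{1}VWf\|_{T^{p}_{s}(\Sp)}\lesssim\|Wf\|_{T^{p}_{s}(\Sp)}$ via \eqref{eq:repro}, with $\sigma^{-s}\lb\xi\rb^{-s}\psi_{\w,\sigma}$ giving the converse; boundedness and surjectivity of $V$ then follow from Corollary~\ref{cor:FIOtent} and $VW=\id$, exactly as you outline (so there is no need for the subtraction against the multiplication operator $\sigma^{-s}$ in your first route, which in any case would have to be read as an estimate from $T^{p}_{s}$ to $T^{p}$ rather than on $T^{p}_{s}$ itself).
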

\begin{proof}
We will follow 
the reasoning from \cite[Proposition 3.4]{Hassell-Rozendaal23}, which contains an analogous statement for $p\geq1$, albeit with a slightly different wave packet transform and using a different parametrization of $\Spp$.

For $g\in\Sw'(\Rn)$ and $(x,\w,\sigma)\in\Spp$, set
\[
\wt{W}_{1}g(x,\w,\sigma):=
\begin{cases}
\sigma^{s}\lb D\rb^{s}\psi_{\w,\sigma}(D)f(x)&\text{if }0<\sigma<1,\\
\ind_{[1,e]}(\sigma)\sigma^{s}\lb D\rb^{s}\rho(D)f(x)&\text{if }\sigma\geq1.
\end{cases}
\]
That is, $\wt{W}_{1}$ is as in \eqref{eq:defW}, but with $\psi_{\w,\sigma}(\xi)$ replaced by $\wt{\psi}_{\w,\sigma}(\xi):=\sigma^{s}\lb \xi\rb^{s}\psi_{\w,\sigma}(\xi)$,
and with $\rho(\xi)$ replaced by $\sigma^{s}\lb \xi\rb^{s}\rho(\xi)$. Note that the factor of $\sigma$ in this low-frequency contribution is bounded from above and below. Now, it is easy to check that $\wt{\psi}_{\w,\sigma}$ satisfies the same type of estimates as $\psi_{\w,\sigma}$, from Lemma \ref{lem:packetbounds}, and similarly for the low-frequency term. Hence, by Remark \ref{rem:FIOtentother} with $T$ the identity operator, $\wt{W}_{1}V\in \La(T^{p}_{s}(\Sp))$. 

Next, suppose that $Wf\in T^{p}_{s}(\Sp)$. Then, because $\lb D\rb^{s}$ commutes with $VW$, we can combine what have just shown with \eqref{eq:repro} to obtain
\begin{align*}
\|f\|_{\Hps}&=\|W\lb D\rb^{s}f\|_{T^{p}(\Sp)}=\|WVW\lb D\rb^{s}f\|_{T^{p}(\Sp)}\\
&=\|\wt{W}_{1}VWf\|_{T^{p}_{s}(\Sp)}\lesssim  \|Wf\|_{T^{p}_{s}(\Sp)},
\end{align*}
which is the second inequality in \eqref{eq:HpFIOtent}.

The argument for the first inequality in \eqref{eq:HpFIOtent} is analogous, but 
this time one replaces $\psi_{\w,\sigma}(\xi)$ 
by $\sigma^{-s}\lb \xi\rb^{-s}\psi_{\w,\sigma}(\xi)$, and 
$\rho(\xi)$ 
by $\sigma^{-s}\lb \xi\rb^{-s}\rho(\xi)$. 
Finally, the last statement follows by combining \eqref{eq:repro} and Corollary \ref{cor:FIOtent}.
\end{proof}

As in \cite{HaPoRo20}, we can now easily 
 derive various basic properties of $\Hp$ from those of $T^{p}(\Sp)$. For example, Proposition \ref{prop:HpFIOtent} 
implies that 
$W\Hps$ is a complemented subspace of $T^{p}_{s}(\Sp)$ for all $0<p\leq \infty$, and that
\[
V:T^{p}_{s}(\Sp)/\ker(V)\to \Hps
\]
is an isomorphism. In particular, $\Hps$ is a quasi-Banach space for all $s\in\R$, and a Banach space if $p\geq 1$. Moreover, as in the proof of \cite[Proposition 6.4]{HaPoRo20}, one can use Remark \ref{rem:FIOtentother} 
to show that the definition of $\Hp$ is, up to quasi-norm equivalence, independent of the particular choice of wave packets. One instance of this is the fact that $\HT^{2}_{FIO}(\Rn)=L^{2}(\Rn)$ isometrically.

Finally, as in the proof of \cite[Proposition 6.6]{HaPoRo20}, Corollary \ref{cor:FIOtent} and Lemma \ref{lem:distributions} can be combined to show that the continuous embeddings
\[
\Sw(\Rn)\subseteq \Hps\subseteq\Sw'(\Rn)
\]
hold for all $0<p\leq \infty$ and $s\in\R$, and that the first inclusion is dense if $p<\infty$. In fact, this reasoning shows that the Schwartz functions with compact Fourier support are dense in $\Hps$ if $p<\infty$. On the other hand, 
$\Sw(\Rn)$ is not dense in $\HT^{s,\infty}_{FIO}(\Rn)$ for any $s\in\R$, for example because nonzero constant functions are contained in $\HT^{s,\infty}_{FIO}(\Rn)$ but 
cannot be approximated in the $\HT^{s,\infty}_{FIO}(\Rn)$ norm by Schwartz functions (this follows e.g.~from  Theorem \ref{thm:Sobolev}).

\subsection{Interpolation and duality}\label{subsec:interdual}

A crucial role will be played in this article by the following proposition on complex interpolation of the Hardy spaces for Fourier integral operators, an extension of \cite[Proposition 6.7]{HaPoRo20} and \cite[Corollary 3.5]{Hassell-Rozendaal23}.

\begin{proposition}\label{prop:HpFIOint}
Let $p_{0},p_{1}\in(0,\infty]$ be such that $(p_{0},p_{1})\neq (\infty,\infty)$, and let $p\in(0,\infty)$, $s_{0},s_{1},s\in\R$ and $\theta\in(0,1)$ be such that $\frac{1}{p}=\frac{1-\theta}{p_{0}}+\frac{\theta}{p_{1}}$ and $s=(1-\theta)s_{0}+\theta s_{1}$. Then
\[
[\HT^{s_{0},p_{0}}_{FIO}(\Rn),\HT^{s_{1},p_{1}}_{FIO}(\Rn)]_{\theta}=\Hps.
\]
\end{proposition}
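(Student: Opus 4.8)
The strategy is to transport the interpolation identity from the tent space side, where Proposition \ref{prop:tentint} already does the work, back to the Hardy space side using the retraction/coretraction pair $(W, V)$. Recall that by Proposition \ref{prop:HpFIOtent} the map $W$ is (up to equivalence of quasi-norms) an isometric embedding $\HT^{s,p}_{FIO}(\Rn)\hookrightarrow T^{p}_{s}(\Sp)$, while $V:T^{p}_{s}(\Sp)\to\HT^{s,p}_{FIO}(\Rn)$ is bounded with $VW=\id$ on $\Sw'(\Rn)$; thus $P:=WV$ is a bounded projection on $T^{p}_{s}(\Sp)$ with range $W\HT^{s,p}_{FIO}(\Rn)$, and this holds \emph{simultaneously} for the pair of indices $(s_0,p_0)$ and $(s_1,p_1)$ because the wave packet transforms do not depend on $s$ or $p$. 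The plan is: (i) invoke Proposition \ref{prop:tentint} to get $[T^{p_0}_{s_0}(\Sp),T^{p_1}_{s_1}(\Sp)]_{\theta}=T^{p}_{s}(\Sp)$ with equivalent quasi-norms; (ii) apply the interpolation functor to the bounded maps $W:\HT^{s_i,p_i}_{FIO}(\Rn)\to T^{p_i}_{s_i}(\Sp)$ and $V:T^{p_i}_{s_i}(\Sp)\to\HT^{s_i,p_i}_{FIO}(\Rn)$ to obtain bounded maps $W:[\HT^{s_0,p_0}_{FIO}(\Rn),\HT^{s_1,p_1}_{FIO}(\Rn)]_{\theta}\to T^{p}_{s}(\Sp)$ and $V:T^{p}_{s}(\Sp)\to[\HT^{s_0,p_0}_{FIO}(\Rn),\HT^{s_1,p_1}_{FIO}(\Rn)]_{\theta}$; (iii) since $VW=\id$ and $WV$ is a bounded projection onto $W\HT^{s_i,p_i}_{FIO}(\Rn)$ for each $i$, conclude by the standard retract argument that $[\HT^{s_0,p_0}_{FIO}(\Rn),\HT^{s_1,p_1}_{FIO}(\Rn)]_{\theta}$ is isomorphic, via $W$, to the complemented subspace $W\HT^{s,p}_{FIO}(\Rn)$ of $T^{p}_{s}(\Sp)$, which is exactly $\HT^{s,p}_{FIO}(\Rn)$.

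A few technical points need care. First, the retract argument requires the complex interpolation method to behave well under complemented subspaces and under the functor respecting bounded linear maps; for quasi-Banach spaces this is not automatic, and one must use the version of complex interpolation for quasi-Banach spaces developed in the references, ensuring the relevant spaces satisfy the geometric hypotheses (e.g. analytic convexity / being $A$-convex, or satisfying a lower estimate) under which the functor is well behaved. This is precisely the material the authors say they have collected in Appendix \ref{sec:inter}, so I would cite the appropriate lemmas from there: one needs that $T^{p}_{s}(\Sp)$ — hence its complemented subspace $W\HT^{s,p}_{FIO}(\Rn)\cong\HT^{s,p}_{FIO}(\Rn)$ — has the geometric property guaranteeing that $[\cdot,\cdot]_{\theta}$ is a genuine interpolation functor, and that the interpolation space of a complemented subspace is the complemented subspace of the interpolation space with the induced projection.

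Second, one should check the compatibility of the Banach couples: $(\HT^{s_0,p_0}_{FIO}(\Rn),\HT^{s_1,p_1}_{FIO}(\Rn))$ must be a compatible quasi-Banach couple, which follows from the continuous embeddings $\Sw(\Rn)\subseteq\HT^{s,p}_{FIO}(\Rn)\subseteq\Sw'(\Rn)$ established in Section \ref{subsec:defHpFIO} (so both embed continuously into $\Sw'(\Rn)$), and similarly $(T^{p_0}_{s_0}(\Sp),T^{p_1}_{s_1}(\Sp))$ is compatible via $\J'(\Spp)$ by Lemma \ref{lem:distributions}; moreover $W$ and $V$ are compatible morphisms of couples because they are defined on $\Sw'(\Rn)$ and $\J'(\Spp)$ respectively and intertwine the two ambient spaces. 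Here the dense subspace of Schwartz functions with compact Fourier support is a convenient common dense domain on which $VW=\id$ and on which all the maps are defined by the same formula regardless of $(s,p)$, which is what makes the coretraction genuinely a coretraction for the couple.

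\textbf{Main obstacle.} The conceptual core — the retract argument plus Proposition \ref{prop:tentint} — is short; the real work, and the step I expect to be most delicate, is justifying that complex interpolation of the quasi-Banach spaces $\HT^{s,p}_{FIO}(\Rn)$ and $T^{p}_{s}(\Sp)$ with $p<1$ behaves functorially and respects complemented subspaces. Unlike the Banach case, for $p<1$ one cannot take the Calderón interpolation machinery for granted: the analyticity of $\log$-convex norm functions fails, and one must work with a notion of complex interpolation adapted to quasi-Banach spaces and verify the hypotheses (analytic convexity of the tent spaces, compatibility, density of a common core). All of this is deferred to Appendix \ref{sec:inter}, so the proof at this point reduces to carefully citing the appropriate statements there and checking their hypotheses apply to our couples; I would present it in that form rather than reproving the abstract facts.
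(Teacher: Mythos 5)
Your proposal is correct and takes essentially the same approach as the paper. The paper likewise transports Proposition \ref{prop:tentint} through the retraction/coretraction pair $(W,V)$, using Proposition \ref{prop:HpFIOtent} and \eqref{eq:repro}, together with the analytic convexity of $T^{p_0}_{s_0}(\Sp)+T^{p_1}_{s_1}(\Sp)$; the precise abstract lemma the authors invoke for the retract argument in the quasi-Banach setting is \cite[Lemma 7.11]{KaMaMi07}, which is the concrete form of the step you describe as ``cite the appropriate lemmas'' from the interpolation appendix.
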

\begin{proof}
The statement for $p_{0},p_{1}\geq 1$ is contained in \cite[Corollary 3.5]{Hassell-Rozendaal23}. 
In our setting some additional care is required, due to the subtleties of complex interpolation of quasi-Banach spaces (see Appendix \ref{sec:inter}). 

As noted in the proof of Proposition \ref{prop:tentint} in Appendix \ref{sec:inter}, $T^{p_{0}}_{s_{0}}(\Sp)+T^{p_{1}}_{s_{1}}(\Sp)$ is analytically convex. 
Hence Proposition \ref{prop:HpFIOtent}, \eqref{eq:repro}, \cite[Lemma 7.11]{KaMaMi07} and Proposition \ref{prop:tentint} combine to yield the desired statement:
\begin{align*}
&[\HT^{s_{0},p_{0}}_{FIO}(\Rn),\HT^{s_{1},p_{1}}_{FIO}(\Rn)]_{\theta}=[VT_{s_{0}}^{p_{0}}(\Sp),VT_{s_{1}}^{p_{1}}(\Spp)]_{\theta}\\
&=V\big([T_{s_{0}}^{p_{0}}(\Sp),T_{s_{1}}^{p_{1}}(\Spp)]_{\theta}\big)=VT^{p}_{s}(\Sp)=\Hps.
\end{align*}
For completeness we note that, to directly apply \cite[Lemma 7.11]{KaMaMi07}, $\HT^{s_{0},p_{0}}_{FIO}(\Rn)\cap \HT^{s_{1},p_{1}}_{FIO}(\Rn)$ should be dense in both $\HT^{s_{0},p_{0}}_{FIO}(\Rn)$ and $\HT^{s_{1},p_{1}}_{FIO}(\Rn)$. However, this assumption is not used in the proof of that result.
\end{proof}

\begin{remark}\label{rem:HpFIOanconvex}
It is of independent interest to note that, under the assumptions of Proposition \ref{prop:HpFIOint}, $\HT^{s_{0},p_{0}}_{FIO}(\Rn)+\HT^{s_{1},p_{1}}_{FIO}(\Rn)$ is analytically convex. Indeed, by Proposition \ref{prop:HpFIOtent}, this space is isomorphic to a closed subspace of $T^{p_{0}}_{s_{0}}(\Sp)+T^{p_{1}}_{s_{1}}(\Sp)$. As already noted above, the latter space is analytically convex, and then \cite[Lemma 7.5]{KaMaMi07} implies that $\HT^{s_{0},p_{0}}_{FIO}(\Rn)+ \HT^{s_{1},p_{1}}_{FIO}(\Rn)$ is analytically convex as well.
\end{remark}

\begin{remark}\label{rem:interinfty}
In \cite[Corollary 3.5]{Hassell-Rozendaal23} (see also \cite[page 7]{Rozendaal22} and \cite[equation (2.3) and Remark 4.12]{LiRoSoYa24}), Proposition \ref{prop:HpFIOint} is stated for $p_{0},p_{1}\in[1,\infty]$, but without the additional condition that $(p_{0},p_{1})\neq (\infty,\infty)$. This omission is unfortunate, because there are subtleties regarding the choice of complex interpolation method for spaces in which the Schwartz functions do not lie dense (see e.g.~\cite[Section 2.4.7]{Triebel10} and \cite[Section 13.A]{Taylor23c}). Given that the case $p_{0}=p_{1}=\infty$ is not relevant for this article or its companion \cite{LiRoSo25a}, we will not pursue this matter further here.
\end{remark}

Next, we include a duality statement which extends \cite[Proposition 6.8]{HaPoRo20}. 

\begin{proposition}\label{prop:HpFIOdual}
Let $p\in[1,\infty)$ and $s\in\R$. Then $\Hps^{*}=\HT^{-s,p'}_{FIO}(\Rn)$. 
Moreover, for $p\in(0,1)$, the space $\Hps^{*}$ consists of those $f\in\Sw'(\Rn)$ such that $Wf\in T^{\infty}_{-s,1/p-1}(\Sp)$, and there exists a $C>0$ such that 
\[
\frac{1}{C}\|Wf\|_{T^{\infty}_{-s,1/p-1}(\Sp)}\leq \|f\|_{\Hps^{*}}\leq C\|Wf\|_{T^{\infty}_{-s,1/p-1}(\Sp)}
\]
for all such $f$.
\end{proposition}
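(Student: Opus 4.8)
The plan is to deduce the duality statement from the corresponding facts about tent spaces, namely \eqref{eq:tentdual} for $p \geq 1$ and \eqref{eq:tentdual1} for $p < 1$, by transporting them through the wave packet transforms $W$ and $V$. The essential structural input is Proposition \ref{prop:HpFIOtent}, which tells us that $W : \Hps \to T^{p}_{s}(\Sp)$ is an isomorphism onto a complemented closed subspace, with $V : T^{p}_{s}(\Sp) \to \Hps$ a bounded left inverse (up to the isomorphism) satisfying $VW = \id$ on $\Sw'(\Rn)$. Since $W$ is an isomorphism onto its (complemented) image and $V$ restricted to that image inverts it, standard functional analysis identifies $\Hps^{*}$ with the quotient $(T^{p}_{s}(\Sp))^{*} / (W\Hps)^{\perp}$, or equivalently with the image of the adjoint $V^{*}$; concretely, a functional $\Lambda$ on $\Hps$ corresponds to $\Lambda \circ V$ on $T^{p}_{s}(\Sp)$, and conversely a functional on the tent space restricts to one on $W\Hps \cong \Hps$.

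For the first claim ($p \in [1,\infty)$), I would argue as in \cite[Proposition 6.8]{HaPoRo20}: given $g \in \HT^{-s,p'}_{FIO}(\Rn)$, the pairing $f \mapsto \lb Wf, Wg\rb_{\Spp}$ (using \eqref{eq:pairing}) defines a bounded functional on $\Hps$ by Cauchy--Schwarz in the tent space duality \eqref{eq:tentdual}, and this pairing equals $\lb f, g\rb_{\Rn}$ by Proposition \ref{prop:transforms}\eqref{it:transforms4} together with the reproducing identity \eqref{eq:repro}; so $\HT^{-s,p'}_{FIO}(\Rn) \hookrightarrow \Hps^{*}$ boundedly. Conversely, given $\Lambda \in \Hps^{*}$, the functional $\Lambda \circ V$ lies in $(T^{p}_{s}(\Sp))^{*} = T^{p'}_{-s}(\Sp)$, so it is represented by some $G \in T^{p'}_{-s}(\Sp)$; setting $g := VG \in \HT^{-s,p'}_{FIO}(\Rn)$ (here $V$ maps $T^{p'}_{-s}(\Sp)$ boundedly into $\HT^{-s,p'}_{FIO}(\Rn)$ by Proposition \ref{prop:HpFIOtent}) and using $VW = \id$, one checks $\Lambda(f) = (\Lambda \circ V)(Wf) = \lb Wf, G\rb_{\Spp} = \lb f, g\rb_{\Rn}$ for $f$ in the dense subspace of Schwartz functions with compact Fourier support, hence everywhere. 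The density of $\Sw(\Rn)$ in $\Hps$ for $p < \infty$ guarantees that the representing $g$ is unique, so the identification is a genuine isomorphism of quasi-normed spaces.

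For $p \in (0,1)$ the argument is formally identical but uses \eqref{eq:tentdual1} in place of \eqref{eq:tentdual}, so that $(T^{p}_{s}(\Sp))^{*} = T^{\infty}_{-s,1/p-1}(\Sp)$ under the same pairing \eqref{eq:pairing}; one then needs to know that $V$ maps $T^{\infty}_{-s,1/p-1}(\Sp)$ into $\Sw'(\Rn)$ (via the extension of $V$ to $\J'(\Spp)$, using the second embedding in \eqref{eq:distributions2} from Lemma \ref{lem:distributions}) and that for $f \in \Hps$ one has $Wf \in T^{\infty}_{-s,1/p-1}(\Sp)$ with the stated norm equivalence. The latter follows by combining the representation $\Lambda = \lb W\cdot, Wg\rb$ with the fact that $W$ is an isomorphism onto its image and $\|Wg\|_{T^{\infty}_{-s,1/p-1}(\Sp)} \eqsim \|g\|_{\Hps^{*}}$; more precisely, that the functional norm of $f \mapsto \lb Wf, Wg\rb_{\Spp}$ is comparable to $\|Wg\|_{T^{\infty}_{-s,1/p-1}(\Sp)}$ is exactly the content of the tent-space duality \eqref{eq:tentdual1} restricted to the complemented subspace $W\Hps$, using a bounded projection onto it.

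The main obstacle I anticipate is the $p < 1$ case: unlike for $p \geq 1$, the duality $(T^{p}_{s})^{*} = T^{\infty}_{-s,1/p-1}$ is more delicate, and one must be careful that the bounded projection $W V$ onto $W\Hps \subseteq T^{p}_{s}(\Sp)$ (which exists and is bounded on $T^{p}_{s}(\Sp)$ by Proposition \ref{prop:HpFIOtent} and Corollary \ref{cor:FIOtent}) has an adjoint that behaves well on $T^{\infty}_{-s,1/p-1}(\Sp)$, so that restricting a functional and extending it are genuinely inverse operations. Concretely, one must verify that for $G \in T^{\infty}_{-s,1/p-1}(\Sp)$ representing $\Lambda \circ V$, the element $g = VG$ satisfies $Wg = (WV)^{*}$-image of $G$ in a way that keeps the norm under control; this is where one uses that $WV$ is a bounded idempotent on the tent space and that its adjoint is then a bounded idempotent on the dual, so the complemented-subspace duality $(W\Hps)^{*} \cong WV \cdot (T^{p}_{s}(\Sp))^{*}$ holds with equivalent norms. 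Once this bookkeeping is in place the norm equivalence in the statement is immediate. I would also remark, as the authors do elsewhere, that the quasi-Banach subtleties here concern only quasi-norm equivalence constants and not the underlying linear-algebraic identifications, so no analytic-convexity input (as was needed for interpolation) is required.
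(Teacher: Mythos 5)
Your proposal is correct and follows essentially the same route as the paper's (very terse) proof: reduce to the tent space dualities \eqref{eq:tentdual} and \eqref{eq:tentdual1} via the transforms $W$ and $V$, using Proposition \ref{prop:HpFIOtent}, the reproducing identity \eqref{eq:repro}, density of $\Sw(\Rn)$ in $\Hps$, and the adjointness in Proposition \ref{prop:transforms}\eqref{it:transforms4}. The one bookkeeping step you flag, that $Wg=(WV)^{*}G$ for $G$ representing $\Lambda\circ V$, does indeed go through because $W$ and $V$ are formal transposes with respect to the bilinear pairings, so $\lb WVF,G\rb_{\Spp}=\lb VF,VG\rb_{\Rn}=\lb F,W(VG)\rb_{\Spp}$ for all test functions $F\in\J(\Spp)$, identifying $(WV)^{*}G$ with $W(VG)$ in $\J'(\Spp)$.
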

\begin{proof}
The first statement is contained in \cite[Proposition 6.8]{HaPoRo20} for $s=0$, from which the case of general $s\in\R$ follows immediately. 
The argument for $p\in(0,1)$ is analogous, relying on Propositions \ref{prop:transforms} and \ref{eq:HpFIOtent}, the density of $\Sw(\Rn)$ in $\Hps$ and of $\J(\Spp)$ in $T^{p}(\Sp)$, and \eqref{eq:repro}, to reduce matters to 
\eqref{eq:tentdual1}. 
\end{proof}

\begin{remark}\label{rem:dualrelation}
In Proposition \ref{prop:HpFIOdual}, the duality relation is the standard pairing $\lb f,g\rb_{\Rn}$ between $f\in \HT^{-s,p'}_{FIO}(\Rn)\subseteq\Sw'(\Rn)$ and $g\in\Sw(\Rn)\subseteq\Hps$ for $p\geq1$, and analogously for $p<1$. This determines the action of $f$ uniquely, since $\Sw(\Rn)$ is dense in $\Hps$ for all $0<p<\infty$ and $s\in\R$. We will continue writing $\lb f,g\rb_{\Rn}$ for the bilinear pairing between general $f\in(\Hps)^{*}$ and $g\in\Hps$, and similarly for the sesquilinear pairing $\lb f,g\rb=\lb f,\overline{g}\rb_{\Rn}$. The former pairing is also given by the duality relation $\lb Wf,Wg\rb_{\Spp}$ from \eqref{eq:pairing}. 
%
\end{remark}

\subsection{Molecular decomposition}\label{subsec:moldecomp}

In this subsection we obtain a decomposition of $\Hp$, for $0<p\leq 1$, in terms of what were called coherent molecules in \cite{Smith98a}.

 Recall the definition of the metric $d$ on $\Sp$ from Section \ref{subsec:metric}, and the equivalent expression for it in \eqref{eq:equivmetric}.

\begin{definition}\label{def:molecule}
Let $0<p\leq 1$ and $N\geq0$. 
An $f\in L^{2}(\Rn)$ is a \emph{coherent $\Hp$ molecule} of type $N$, 
associated with a ball $B_{\sqrt{\tau}}(y,\nu)$, for $(y,\nu)\in\Sp$ and $\tau>0$, if 
\[
\supp(\wh{f}\,)\subseteq \{\xi\in \mathbb{R}^{n}\mid |\xi|\geq\tau^{-1},|\hat{\xi}-\nu|\leq \sqrt{\tau}\}
\]
and 
\begin{equation}\label{eq:molecule}
\int_{\Rn} \big(1+\tau^{-1}d((x,\nu),(y,\nu))^{2}\big)^{N}|f(x)|^{2}\ud x\leq |B_{\sqrt{\tau}}(y,\nu)|^{-(\frac{2}{p}-1)}.
\end{equation}
\end{definition}

The following theorem extends \cite[Theorem 3.6]{Smith98a} and \cite[Theorem 8.3]{HaPoRo20} to $p<1$. 

\begin{theorem}\label{thm:moldecomp}
Let $0<p\leq 1$ and $N>2n(\frac{2}{p}-1)$. Then the following assertions hold.
\begin{enumerate}
\item\label{it:moldecomp1} For every $\tau_{0}>0$ there exists a $C\geq 0$ such that the following holds. For 
each sequence $(f_{k})_{k=1}^{\infty}$ of coherent $\Hp$ molecules of type $N$, associated with balls of radius at most $\tau_{0}$, 
and each $(\alpha_{k})_{k=1}^{\infty}\in\ell^{p}$, one has $\sum_{k=1}^{\infty}\alpha_{k}f_{k}\in\Hp$ and
\[
\Big\|\sum_{k=1}^{\infty}\alpha_{k}f_{k}\Big\|_{\Hp}\leq C\Big(\sum_{k=1}^{\infty}|\alpha_{k}|^{p}\Big)^{1/p}.
\]
\item\label{it:moldecomp2} There exist $\tau_{0},C>0$ such that, for each $f\in\Hp$, there exists a sequence $(f_{k})_{k=1}^{\infty}$ of coherent $\Hp$ molecules of type $N$, associated with balls of radius at most $\tau_{0}$, 
and an $(\alpha_{k})_{k=1}^{\infty}\in \ell^{p}$, such that $f=\sum_{k=1}^{\infty}\alpha_{k}f_{k}+\rho(D)^{2}f$ and
\[
\Big(\sum_{k=1}^{\infty}|\alpha_{k}|^{p}\Big)^{1/p}\leq C\|f\|_{\Hp}.
\]
\end{enumerate}
\end{theorem}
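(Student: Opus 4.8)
The strategy is to transfer the atomic decomposition for the tent space $T^{p}(\Sp)$ from Proposition \ref{prop:atomictent} to a molecular decomposition for $\Hp$, via the wave packet transforms $W$ and $V$, and to handle the low-frequency part separately using Lemma \ref{lem:Wlow}. The key observation is that $V$ applied to a $T^{p}(\Sp)$ atom $A$ (suitably localized in $\sigma$) produces a coherent $\Hp$ molecule, up to a harmless normalization, and conversely that $W$ applied to a coherent molecule is controlled in $T^{p}(\Sp)$.

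For part \eqref{it:moldecomp1}, I would first observe that for a coherent $\Hp$ molecule $f_{k}$ of type $N$ associated with $B_{\sqrt{\tau_{k}}}(y_{k},\nu_{k})$, the frequency support condition forces $\psi_{\w,\sigma}(D)f_{k}$ to vanish unless $\sigma\eqsim\tau_{k}$ and $|\w-\nu_{k}|\lesssim\sqrt{\tau_{k}}$, by Lemma \ref{lem:packetbounds}, so $Wf_{k}$ is essentially supported near the tent $T(B_{C\sqrt{\tau_{k}}}(y_{k},\nu_{k}))$; one then uses the kernel decay \eqref{eq:boundspsiinverse} together with the moment bound \eqref{eq:molecule} and the control \eqref{eq:HLcontrol} of such averages by the Hardy--Littlewood maximal function to estimate $\A Wf_{k}$ pointwise. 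This shows $\|Wf_{k}\|_{T^{p}(\Sp)}\lesssim 1$ uniformly (using $N>2n(\tfrac{2}{p}-1)$ to absorb the tail outside the tent, summed over dyadic annuli in $d((x,\w),(y_{k},\nu_{k}))$). The passage from the uniform bound on single molecules to the bound on $\sum_k\alpha_k f_k$ then uses the $p$-subadditivity of $\|\cdot\|_{T^{p}(\Sp)}^{p}$ (equivalently, the $L^{p}$ quasi-norm with $p\le 1$), exactly as in the final statement of Proposition \ref{prop:atomictent} or as in \cite[Theorem 8.3]{HaPoRo20}: convergence in $\Sw'(\Rn)$ is checked first, then one applies $W$ and sums the tent-space norms in $\ell^{p}$.

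For part \eqref{it:moldecomp2}, I would split $f=\rho(D)^{2}f+(1-\rho(D)^{2})f$ and, writing $g:=(1-\rho(D)^{2})f$ (or rather the high-frequency remainder), apply the atomic decomposition of Proposition \ref{prop:atomictent} to $W_{h}g\in T^{p}(\Sp)$, invoking Remark \ref{rem:balls} to ensure the atoms $A_{k}$ are associated with balls of radius at most some fixed $\tau_{0}$ (after checking that the relevant part of $Wg$ is supported in $\sigma\le 1$, or rescaling). Then $f_{k}:=V(A_{k})$, up to an extra application of a fixed Fourier multiplier supported away from zero to install the correct frequency localization, is a coherent $\Hp$ molecule of type $N$ associated with (a dilate of) the same ball: the support condition comes from the frequency localization of the wave packets in $V$ as in Lemma \ref{lem:packetbounds}, and the moment estimate \eqref{eq:molecule} follows from the $L^{2}$-normalization of the atom, the kernel decay \eqref{eq:boundspsiinverse}, and \eqref{eq:HLcontrol}, all multiplied out against the weight $(1+\tau^{-1}d((x,\nu),(y,\nu))^{2})^{N}$. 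The reproducing formula \eqref{eq:repro}, $VW_{h}g=g-VW_{l}g$, identifies $g$ (modulo the low-frequency term, which can be folded into $\rho(D)^{2}f$ after adjusting the cutoffs) with $\sum_k\alpha_k f_k$, and the coefficient bound $\big(\sum_k|\alpha_k|^{p}\big)^{1/p}\lesssim\|W_{h}g\|_{T^{p}(\Sp)}\lesssim\|g\|_{\Hp}\lesssim\|f\|_{\Hp}$ comes directly from Proposition \ref{prop:atomictent} and Proposition \ref{prop:HpFIOtent}.

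\textbf{Main obstacle.} The routine parts are the $\ell^{p}$-summation and the reproducing-formula bookkeeping; the genuinely delicate point is verifying that $V(A_{k})$ (resp.\ that $Wf_{k}$ reconstitutes correctly) satisfies the \emph{pointwise weighted} molecule estimate \eqref{eq:molecule} with the weight centered at $(y,\nu)$ rather than a tent-space average. This requires carefully tracking how the anisotropic (parabolic) kernel bound \eqref{eq:boundspsiinverse} interacts with the metric $d$ from \eqref{eq:equivmetric}, and exploiting $N>2n(\tfrac{2}{p}-1)$ to make the volume factors $|B_{\sqrt\tau}(y,\nu)|^{-(2/p-1)}$ balance against the decay when integrating over the complement of the tent; this is where one must be a bit more attentive than in the $p=1$ case of \cite{Smith98a, HaPoRo20}, since the normalization exponent $\tfrac{2}{p}-1$ grows as $p\downarrow 0$. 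A secondary nuisance is the bona fide separation of the low-frequency contribution: one needs Lemma \ref{lem:Wlow} to ensure $\rho(D)^{2}f$ and the associated correction terms are controlled in the right norms, and to justify that the atoms can indeed be taken with radius bounded by a fixed $\tau_{0}$ via Remark \ref{rem:balls}.
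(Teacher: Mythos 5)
Your global strategy — transfer the atomic decomposition of $T^{p}(\Sp)$ to a molecular decomposition of $\Hp$ via the wave packet transforms $W$ and $V$ — is precisely what the paper does, and your sketch of part \eqref{it:moldecomp2} is essentially correct up to unneeded complications (the paper applies Proposition \ref{prop:atomictent} and Remark \ref{rem:balls} directly to $W_{h}f$, and the identity $VW_{l}f=\rho(D)^{2}f$ already handles the low frequencies; no auxiliary Fourier multiplier and no ``folding'' of low-frequency corrections is required, since $VA_{k}$ is automatically a coherent molecule associated with a fixed dilate of the atom's ball). However, your proposal for part \eqref{it:moldecomp1} has a real gap. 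First, a factual slip: the frequency support condition of a coherent molecule forces $\psi_{\w,\sigma}(D)f_{k}$ to vanish only for $\sigma\gtrsim\tau_{k}$ (so that $W_{h}f_{k}$ lives in $\sigma\lesssim\tau_{k}$), not outside a band $\sigma\eqsim\tau_{k}$; molecules contain all high frequencies above scale $\tau_{k}^{-1}$. More seriously, the route through \eqref{eq:HLcontrol} cannot close. That inequality gives a pointwise bound by the Hardy--Littlewood maximal function, but $\mathcal{M}$ (and $\mathcal{M}_{\lambda}$ for $\lambda<1$) is not bounded on $L^{p}(\Sp)$ when $p\le 1$, so dominating $\A Wf_{k}$ by a maximal function does not yield the required uniform bound $\|Wf_{k}\|_{T^{p}(\Sp)}\lesssim 1$. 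If instead you mean to integrate explicit decay over dyadic annuli in $d((x,\w),(y_{k},\nu_{k}))$ — which is the correct idea — then \eqref{eq:HLcontrol} is not the tool that produces that decay.

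What the paper actually invokes for the high-frequency part of \eqref{it:moldecomp1} is the weighted $L^{2}$ estimate for wave packet transforms from \cite[Lemma 8.2]{HaPoRo20}, combined via the metric comparison \eqref{eq:weights} with the molecule's weighted $L^{2}$ bound \eqref{eq:molecule}; this yields quantitative decay of $\A W_{h}f_{k}$ in $d((x,\w),(y_{k},\nu_{k}))$, which one then raises to the $p$-th power and sums over dyadic annuli, with the hypothesis $N>2n(\frac{2}{p}-1)$ exactly compensating the volume normalization $|B_{\sqrt{\tau_{k}}}(y_{k},\nu_{k})|^{-(\frac{2}{p}-1)}$. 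Your proposal also skips the low-frequency contribution to \eqref{it:moldecomp1}: when $\tau_{k}\gtrsim 1$ one must bound $\|\rho(D)f_{k}\|_{L^{p}(\Rn)}$, and for $p<1$ the paper replaces the Young/H\"{o}lder argument available at $p=1$ by Jensen's inequality with respect to the probability measure proportional to $(1+|x|^{2})^{-n}\ud x$, a step your sketch omits.
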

\begin{proof}
The proof is similar to that 
of \cite[Theorem 8.3]{HaPoRo20}, which in turn is a modification of the proof of \cite[Theorem 3.6]{Smith98a}. We just make a few remarks.

\eqref{it:moldecomp1}: Since $\Hp$ is a quasi-Banach space, it suffices to show that the collection of coherent $\Hp$ molecules of type $N$, associated with balls of radius at most $\tau_{0}$, is uniformly bounded in $\Hp$. Let $f$ be such a molecule, associated with a ball $B_{\sqrt{\tau}}(y,\nu)$, for $(y,\nu)\in\Sp$ and $0<\tau\leq \tau_{0}^{2}$. We may suppose that $y=0$. With notation as in \eqref{eq:defWl} and \eqref{eq:defWh}, we will separately bound $\|W_{l}f\|_{T^{p}(\Sp)}$ and $\|W_{h}f\|_{T^{p}(\Sp)}$. 

For the first term, note that $W_{l}f=0$ unless $\tau\geq 1/2$. Suppose that the latter holds. Lemma \ref{lem:Wlow} yields $\|W_{l}f\|_{T^{p}(\Sp)}\lesssim \|\rho(D)f\|_{L^{p}(\Rn)}$. For $p=1$, one can now use that that $\rho(D)$ has an $L^{1}(\Rn)$ kernel, and then apply H\"{o}lder's inequality, \eqref{eq:equivmetric}, \eqref{eq:molecule} with $y=0$, and Lemma \ref{lem:doubling}:
\[
\|\rho(D)f\|_{L^{1}(\Rn)}\lesssim \|f\|_{L^{1}(\Rn)}\lesssim \Big(\int_{\Rn}(1+|x|^{2})^{n}|f(x)|^{2}\ud x\Big)^{1/2}\lesssim \tau^{-n/4}\lesssim 1.
\]
On the other hand, for $p<1$, first note that $\rho(D)f$ can be expressed as a convolution, since $f\in L^{2}(\Rn)$. Then Jensen's inequality yields
\begin{align*}
\|\rho(D)f\|_{L^{p}(\Rn)}&=\Big(\int_{\Rn}(1+|x|^{2})^{n}\Big|\int_{\Rn}\F^{-1}\rho(x-z)f(z)\ud z\Big|^{p}\frac{\ud x}{(1+|x|^{2})^{n}}\Big)^{1/p}\\
&\lesssim \int_{\Rn}|f(z)|\int_{\Rn}(1+|x|^{2})^{n(\frac{1}{p}-1)}|\F^{-1}\rho(x-z)|\ud x\ud z\\
&\lesssim \int_{\Rn}(1+|z|^{2})^{n(\frac{1}{p}-1)}|f(z)|\ud z,
\end{align*}
where for the last inequality we split the integral over $x$ into an integral over the region where $|x|<2|z|$, plus the remainder. Now proceed as for $p=1$.



For the term $\|W_{h}f\|_{T^{p}(\Sp)}$, one can reason as in the proof of \cite[Theorem 8.3]{HaPoRo20}. The argument relies crucially on \cite[Lemma {8.2}]{HaPoRo20}, a statement about the mapping properties of our wave packet transforms with respect to weighted $L^{2}$ spaces (see also \cite[Lemma 2.14]{Smith98a}). To apply that lemma, observe that 
\begin{equation}\label{eq:weights}
\begin{aligned}
1+\tau^{-1}d((x,\nu),(z,\nu))^{2}&\lesssim (1+\tau^{-1}|(x-z)\cdot\nu|)(1+\tau^{-1}|x-z|^{2})\\
&\lesssim (1+\tau^{-1}d((x,\nu),(z,\nu))^{2})^{2}
\end{aligned}
\end{equation}
for all $x,z\in\Rn$, $\nu\in S^{n-1}$ and $\tau>0$, by \eqref{eq:equivmetric}.

\eqref{it:moldecomp2}: With notation as in \eqref{eq:defWl} and \eqref{eq:defWh}, one can use Proposition \ref{prop:atomictent} and Remark \ref{rem:balls} to obtain a sequence $(A_{k})_{k=1}^{\infty}$ of $T^{p}(\Sp)$ atoms, associated with balls of radius at most $2$, and an $(\alpha_{k})_{k=1}^{\infty}\subseteq\ell^{p}$, such that $W_{h}f=\sum_{k=1}^{\infty}\alpha_{k}A_{k}$ and 
\[
\Big(\sum_{k=1}^{\infty}|\alpha_{k}|^{p}\Big)^{1/p}\lesssim \|W_{h}f\|_{T^{p}(\Sp)}\leq \|Wf\|_{T^{p}(\Sp)}=\|f\|_{\Hp}.
\]
Since $VW_{l}f=\rho(D)^{2}f$, it then suffices to show that there exists a $\tau_{0}>0$, independent of $f$, such that each $VA_{k}$ is a coherent $\Hp$ molecule of type $N$, associated with a ball of radius at most $\tau_{0}$. To do so, one can proceed as in the proof of \cite[Theorem 8.3]{HaPoRo20}, again using \eqref{eq:weights}.  
\end{proof}

\begin{remark}\label{rem:moldecomp}
In fact, both \cite[Theorem 3.6]{Smith98a} and \cite[Theorem 8.3]{HaPoRo20} treat the low-frequencies in part \eqref{it:moldecomp2} slightly differently. The present formulation will be more convenient for the proof of Theorem \ref{thm:Sobolev}. We also note that \cite[Theorem 8.3]{HaPoRo20} involves an additional constant on the right-hand side of \eqref{eq:molecule}; this constant can immediately be removed, by rescaling.



Using Remark \ref{rem:constantmetric}, one can check that it suffices to let $\tau_{0}=4+\sqrt{6}$ in \eqref{it:moldecomp2}. On the other hand, this bound can be improved (see e.g.~\cite[Remark A.3]{CaMcMo13}).
\end{remark}

\subsection{Embeddings}\label{subsec:Sobolev}

In this subsection we prove embeddings involving the Hardy spaces for Fourier integral operators.


Recall that, for $0<p<\infty$, the real Hardy space $H^{p}(\Rn)$ of Fefferman and Stein consists of those $f\in\Sw'(\Rn)$ such that
\begin{equation}\label{eq:Hp}
\|f\|_{H^{p}(\Rn)}:=\Big(\int_{\Rn}\Big(\int_{0}^{\infty}|\Psi(\sigma D)f(x)|^{2}\frac{\ud\sigma}{\sigma}\Big)^{p/2}\ud x\Big)^{1/p}<\infty.
\end{equation}
Here $\Psi$ is as in \eqref{eq:Psi}. We also write $H^{\infty}(\Rn):=BMO(\Rn)=H^{1}(\Rn)^{*}$. 

Throughout, fix a $q\in C^{\infty}_{c}(\Rn)$ satisfying $q(\xi)=1$ for all $\xi\in\Rn$ with $|\xi|\leq 2$. 
For $0<p\leq \infty$, the local Hardy space $\HT^{p}(\Rn)$ consists of all $f\in\Sw'(\Rn)$ such that $q(D)f\in L^{p}(\Rn)$ and $(1-q(D))f\in H^{p}(\Rn)$, endowed with the quasi-norm
\[
\|f\|_{\HT^{p}(\Rn)}:=\|q(D)f\|_{L^{p}(\Rn)}+\|(1-q(D))f\|_{H^{p}(\Rn)}.
\]
We write $\HT^{s,p}(\Rn):=\lb D\rb^{-s}\HT^{p}(\Rn)$ for $s\in\R$.


The definition of $\HT^{p}(\Rn)$ is independent of the choice of low-frequency cutoff $q$. One has 
\[
\HT^{s,p}(\Rn)=W^{s,p}(\Rn)=\lb D\rb^{-s}L^{p}(\Rn)
\]
for $1<p<\infty$ and $s\in\R$, as well as $\HT^{s,p}(\Rn)^{*}=\HT^{-s,p'}(\Rn)$ for $1\leq p<\infty$, and $\HT^{\infty}(\Rn)=\bmo(\Rn)$.


The following result, already mentioned in \eqref{eq:Sobolevintro} for $1<p<\infty$, extends \cite[Theorem 4.2]{Smith98a} and \cite[Theorem 7.4]{HaPoRo20}. 
Recall that 
\[
s(p):=\frac{n-1}{2}\Big|\frac{1}{p}-\frac{1}{2}\Big|
\]
 for $0<p\leq \infty$.

\begin{theorem}\label{thm:Sobolev}
Let $0<p\leq \infty$ and $s\in\R$. Then
\begin{equation}\label{eq:Sobolev}
\HT^{s+s(p),p}(\Rn)\subseteq\Hps\subseteq\HT^{s-s(p),p}(\Rn)
\end{equation}
continuously.
\end{theorem}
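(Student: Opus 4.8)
The plan is to reduce the Sobolev embeddings \eqref{eq:Sobolev} to statements about the wave packet transform $W$ and the tent spaces, splitting into the high-frequency and low-frequency contributions. By Proposition \ref{prop:HpFIOtent}, $f\in\Hps$ iff $Wf\in T^{p}_{s}(\Sp)$ with equivalent quasi-norms, and by Lemma \ref{lem:Wlow} the low-frequency part $W_l f$ is controlled by $\|\rho(D)f\|_{L^p(\Rn)}$. So it suffices to prove (i) the high-frequency analogue of \eqref{eq:Sobolev}, comparing $\|W_h f\|_{T^p_s(\Sp)}$ with the square function appearing in $H^p(\Rn)$ after a Sobolev-order shift of $\pm s(p)$, and (ii) that the low-frequency contributions match up with the $q(D)f\in L^p(\Rn)$ part of the local Hardy norm, which should follow from Lemma \ref{lem:Wlow} together with the fact that $\rho(D)$ and $q(D)$ differ by a harmless smoothing multiplier. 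For $n=1$ one has $s(p)=0$ and the two outer spaces coincide with $\HT^{s,p}(\R)$, recovering the classical fact that $\HT^{p}_{FIO}(\R)=\HT^{p}(\R)$, so we may assume $n\geq 2$.

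For the high-frequency part, the first inclusion $\HT^{s+s(p),p}\subseteq\Hps$: I would write, for $f$ with $\supp(\wh f)\subseteq\{|\xi|\geq 1/2\}$, the conical square function $\A_s(W_h f)$ and integrate out the angular variable $\nu$. The key point is that, at frequency scale $\sigma^{-1}$, the cap $\{|\hat\xi-\w|\leq 2\sqrt\sigma\}$ has measure $\eqsim\sigma^{(n-1)/2}$ on the sphere, and the wave packets $\psi_{\w,\sigma}$ reassemble (via an almost-orthogonality/Plancherel argument in $\nu$, using the bounds in Lemma \ref{lem:packetbounds}) into the Littlewood--Paley piece $\Psi(\sigma D)f$ up to a loss of a factor $\sigma^{-(n-1)/4}$ in amplitude, which is exactly the order $s(p)$ weight when $p\leq 2$. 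Concretely, one estimates
\[
\int_{S^{n-1}}|\psi_{\w,\sigma}(D)f(x)|^{2}\ud\w\eqsim \sigma^{(n-1)/2}|\Psi(\sigma D)f(x)|^{2}
\]
pointwise (for $\Psi$ replaced by a slightly fattened version), after which $\|W_h f\|_{T^p_s(\Sp)}$ becomes a conical square function of $\Psi(\sigma D)f$ with an extra $\sigma^{-s(p)}$; the standard comparison between conical and vertical square functions on $H^p$ (together with \eqref{eq:vertical} for one direction and its reverse, valid on $H^p$, for the other) then yields the embedding into $\HT^{s-s(p),p}(\Rn)$ and, run in reverse with a fattened tile, from $\HT^{s+s(p),p}(\Rn)$. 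For $p\geq 2$ the roles of the two inequalities swap, consistent with $|\tfrac1p-\tfrac12|$ in $s(p)$; the endpoints $p=1,\infty$ are handled by the $T^\infty$ version via \eqref{eq:tentdual1} and duality (Proposition \ref{prop:HpFIOdual}), and $p=2$ is the trivial isometric case $\HT^{2}_{FIO}(\Rn)=L^2(\Rn)$.

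The main obstacle I expect is the low-frequency bookkeeping for $p<1$, together with making the angular reassembly rigorous. For the former, $\rho(D)f\in L^p(\Rn)$ is not automatically comparable to the $\HT^p(\Rn)$ low-frequency norm because multiplication by the smooth compactly supported symbol relating $\rho$ and $q$ does not act boundedly on $L^p$ for $p<1$ by naive convolution; one must use that the relevant pieces have compact Fourier support and invoke \cite[Theorem 1.4.1]{Triebel10} (as in the proof of Lemma \ref{lem:Wlow}) to pass between $L^p$ quasi-norms of band-limited functions. For the angular reassembly, the cleanest route is probably to avoid pointwise identities and instead compare $W_h$ directly with an alternative wave packet transform built from $\Psi(\sigma D)$-type pieces, using Remark \ref{rem:FIOtentother} and Corollary \ref{cor:FIOtent} to see that the two transforms define the same tent-space norm up to the order-$s(p)$ multiplier, exactly as the comparison of wave packet families in \cite[Proposition 6.4]{HaPoRo20}; this sidesteps delicate square-function manipulations and reduces everything to multiplier bounds of the form already packaged in Lemma \ref{lem:packetbounds}. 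I would also double-check the borderline cases $p=1,\infty$ against \cite[Theorem 4.2]{Smith98a} and \cite[Theorem 7.4]{HaPoRo20}, which should be recovered verbatim.
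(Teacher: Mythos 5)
The key step in your high-frequency argument is the claimed pointwise comparison
\begin{equation*}
\int_{S^{n-1}}|\psi_{\w,\sigma}(D)f(x)|^{2}\ud\w\eqsim \sigma^{(n-1)/2}|\Psi(\sigma D)f(x)|^{2},
\end{equation*}
and this is false. What is true, by rotation invariance of $c_\sigma$ and Plancherel in $x$, is the $L^2(\Rn)$-integrated statement $\int_{S^{n-1}}\|\psi_{\w,\sigma}(D)f\|_{L^2(\Rn)}^{2}\ud\w = \|\Psi(\sigma D)f\|_{L^2(\Rn)}^{2}$; the pointwise version fails because the cross terms in $|\psi_{\w,\sigma}(D)f(x)|^2$ (for nearby $\w$) do not cancel at a fixed $x$. (There is also a normalization slip: on the symbol side one gets $\int_{S^{n-1}}|\psi_{\w,\sigma}(\xi)|^2\ud\w=\Psi(\sigma\xi)^2$ with no extra $\sigma^{(n-1)/2}$ factor.) Your fallback route --- comparing $W_h$ with a second wave packet transform via Remark \ref{rem:FIOtentother} and Corollary \ref{cor:FIOtent} --- does not close the gap either: that machinery establishes that $\Hp$ is independent of the choice of wave packet family, i.e.\ it compares two objects which both encode anisotropic regularity, whereas the content of Theorem \ref{thm:Sobolev} is a comparison between the anisotropic $\Hps$ quasi-norm and the isotropic $\HT^{s\mp s(p),p}(\Rn)$ quasi-norm. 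That is not a wave-packet-equivalence statement, and the loss $s(p)$ is genuine and sharp.

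The paper's actual proof runs differently, particularly for the embedding $\Hp\subseteq\HT^{-s(p),p}(\Rn)$, which is derived from the molecular decomposition: one invokes Theorem \ref{thm:moldecomp} to reduce the problem to showing that the collection of coherent $\Hp$ molecules of sufficiently high type, associated with balls of bounded radius, is uniformly bounded in $\HT^{-s(p),p}(\Rn)$, following the argument of \cite[Lemma 4.3]{Smith98a}; the low-frequency piece $\rho(D)^{2}f$ is handled separately with Lemma \ref{lem:Wlow} and the argument from Theorem \ref{thm:moldecomp}\eqref{it:moldecomp1}. For the first embedding, the paper follows the $p=1$ argument of \cite{Smith98a,HaPoRo20}, with Jensen's inequality in place of H\"{o}lder's and with the observation that $\|\lb D\rb^{s}\Psi(\sigma D)g\|_{L^{2}(\Rn)}\eqsim\sigma^{-s}\|\Psi(\sigma D)g\|_{L^{2}(\Rn)}$ to avoid using Sobolev embeddings for $p<1$, and uses the algebra structure of band-limited $L^p$ functions (cf.\ \cite[Proposition 1.5.3]{Triebel10}) for the low frequencies. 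So while your peripheral remarks (handling of low frequencies via band-limitedness, interpolation for $p>2$, duality for $p=\infty$, the base case $p=1$ from the literature) are on the right track, the central mechanism you propose does not work, and the molecular decomposition --- which you do not invoke --- is what carries the second embedding.
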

\begin{proof}
We may suppose that $s=0$. Then the statement for $p\geq1$ is contained in \cite[Theorem 7.4]{HaPoRo20}. In fact, in \cite{HaPoRo20} the required statement is proved for $p=1$, after which one can appeal to interpolation and duality, by Propositions \ref{prop:HpFIOint} and \ref{prop:HpFIOdual} and because $\HT^{2}_{FIO}(\Rn)=L^{2}(\Rn)$. Hence we may consider $p<1$ in the remainder, for which we can follow the proof of the case where $p=1$, in \cite[Theorem 4.2]{Smith98a} and \cite[Theorem 7.4] {HaPoRo20}, with a few modifications.

For example, throughout, applications of H\"{o}lder's inequality in the proof for $p=1$ should be replaced by Jensen's inequality when $p<1$. Also, when dealing with the low-frequency components for $p<1$, one cannot simply rely on Young's inequality anymore, as was done for $p=1$. Instead, one can use that the subspace of $L^{p}(\Rn)$ consisting of functions with Fourier support in a fixed compact set forms an algebra under convolution, cf.~\cite[Proposition 1.5.3]{Triebel10}.

For $p=1$, the proof in \cite{Smith98a} and \cite{HaPoRo20} of the first embedding in \eqref{eq:Sobolev} uses the classical fractional integration theorem (see e.g.~\cite[p.~5818]{HaPoRo20}). To avoid subtleties regarding the use of Sobolev embeddings for $p<1$, one can instead use that, for each $s\in\R$, one has $\|\lb D\rb^{s}\Psi(\sigma D)g\|_{L^{2}(\Rn)}\eqsim \sigma^{-s}\|\Psi(\sigma D)g\|_{L^{2}(\Rn)}$ for all $\sigma\in(0,1)$ and $g\in L^{2}(\Rn)$, where $\Psi$ is as in \eqref{eq:Psi}. With these modifications, to prove the first embedding in \eqref{eq:Sobolev} for $p<1$, one can proceed just as in \cite{HaPoRo20}.

For the second embedding, first note that 
\begin{align*}
\|\rho(D)^{2}f\|_{\HT^{-s(p),p}(\Rn)}&=\|\lb D\rb^{-s(p)}q(D)\rho(D)^{2}f\|_{L^{p}(\Rn)}\lesssim \|\rho(D)f\|_{L^{p}(\Rn)}\\
&\lesssim \|W_{l}f\|_{T^{p}(\Sp)}\leq \|Wf\|_{T^{p}(\Sp)}=\|f\|_{\Hp}
\end{align*}
for every $f\in\Hp$, by Lemma \ref{lem:Wlow}. Hence, by Theorem \ref{thm:moldecomp}, it suffices to show that the collection of coherent $\Hp$ molecules of type $N$, for sufficiently large $N\geq0$, associated with balls of radius no larger than a fixed number, is bounded in $\HT^{-s(p),p}(\Rn)$. For the low-frequency term that arises in this manner, one can argue as in the proof of Theorem \ref{thm:moldecomp} \eqref{it:moldecomp1}. On the other hand, for the high-frequency component one can follow the proof of \cite[Lemma 4.3]{Smith98a}. In fact, a slightly simpler version of that argument suffices, using the assumptions on the Fourier support of a coherent molecule.
\end{proof}

\begin{remark}\label{rem:Sobolevdual}
By duality, it follows from Theorem \ref{thm:Sobolev} and \cite[Theorems 2.5.8.1 and 2.11.3]{Triebel10} that the following embeddings involving Besov spaces hold:
\[
B^{n(\frac{1}{p}-1)-s+s(p)}_{\infty,\infty}(\Rn)\subseteq (\Hps)^{*}\subseteq B^{n(\frac{1}{p}-1)-s-s(p)}_{\infty,\infty}(\Rn),
\]
for all $0<p<1$ and $s\in\R$.
\end{remark}

We can use the same reasoning as in \cite[Corollary 7.6]{HaPoRo20} to obtain from Theorem \ref{thm:Sobolev} an equivalent quasi-norm on $\Hps$. Recall from \eqref{eq:As} and \eqref{eq:defWh} that
\[
\A_{s}(W_{h}f)(x,\w)=\Big(\int_{0}^{1}\fint_{B_{\sqrt{\sigma}}(x,\w)}|\psi_{\nu,\sigma}(D)f(y)|^{2}\ud y\ud\nu\frac{\ud\sigma}{\sigma^{1+2s}}\Big)^{1/2}
\]
for all $s\in\R$, $f\in\Sw'(\Rn)$ and $(x,\w)\in\Sp$, and similarly using the functional $\Ca_{s}$ from \eqref{eq:Cs}. Also recall that $q\in C^{\infty}_{c}(\Rn)$ satisfies $q(\xi)=1$ for $|\xi|\leq 2$.

\begin{corollary}\label{cor:equivalentnorm}
Let $0<p\leq \infty$ and $s\in\R$. Then there exists a $C>0$ such that the following assertions hold for each $f\in\Sw'(\Rn)$.
\begin{enumerate}
\item If $p<\infty$, then $f\in\Hps$ if and only if $q(D)f\in L^{p}(\Rn)$ and $\A_{s}(W_{h}f)\in L^{p}(\Sp)$, in which case 
\[
\frac{1}{C}\|f\|_{\Hps}\leq \|q(D)f\|_{L^{p}(\Rn)}+\|\A_{s}(W_{h}f)\|_{L^{p}(\Sp)}\leq C\|f\|_{\Hps}.
\]
\item If $p=\infty$, then $f\in\HT^{s,\infty}_{FIO}(\Rn)$ if and only if $q(D)f\in L^{\infty}(\Rn)$ and $\Ca_{s}(W_{h}f)\in L^{\infty}(\Sp)$, in which case
\[
\frac{1}{C}\|f\|_{\HT^{s,\infty}_{FIO}(\Rn)}\leq \|q(D)f\|_{L^{\infty}(\Rn)}+\|\Ca_{s}(W_{h}f)\|_{L^{\infty}(\Sp)}\leq C\|f\|_{\HT^{s,\infty}_{FIO}(\Rn)}.
\]
\end{enumerate}
\end{corollary}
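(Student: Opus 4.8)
The plan is to mimic the proof of \cite[Corollary 7.6]{HaPoRo20}, using the molecular decomposition from Theorem \ref{thm:moldecomp} together with the Sobolev embedding Theorem \ref{thm:Sobolev}, and the fact that the low- and high-frequency parts of $Wf$ essentially decouple. First I would observe that $\|\A_{s}(W_{h}f)\|_{L^{p}(\Sp)}=\|W_{h}f\|_{T^{p}_{s}(\Sp)}$ and that, by Proposition \ref{prop:HpFIOtent}, $f\in\Hps$ if and only if $Wf=W_{l}f+W_{h}f\in T^{p}_{s}(\Sp)$. So the content of the corollary is the claim that, for $p<\infty$, the condition ``$W_{l}f\in T^{p}_{s}(\Sp)$'' can be replaced by the simpler condition ``$q(D)f\in L^{p}(\Rn)$'', and similarly for $p=\infty$ with $\Ca_{s}$ and $L^{\infty}$ in place of $\A_{s}$ and $L^{p}$. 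By Lemma \ref{lem:Wlow} (and its $p=\infty$ analogue, which follows from \eqref{eq:tentdual1}, Lemma \ref{lem:distributions}, or can be proved directly as in \cite[Lemma A.1]{HaPoRo20}), $W_{l}f\in T^{p}_{s}(\Sp)$ is equivalent to $\rho(D)f\in L^{p}(\Rn)$ with comparable norms; since $\rho$ and $q$ are both smooth compactly supported low-frequency cutoffs with $q\equiv 1$ on a neighbourhood of $\supp(\rho)$, we have $\rho(D)f=\rho(D)q(D)f$, so $\|\rho(D)f\|_{L^{p}(\Rn)}\lesssim \|q(D)f\|_{L^{p}(\Rn)}$ and the ``only if'' direction of the inequality for the decoupled norm is immediate.

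The substantive direction is to bound $\|f\|_{\Hps}$ from above by $\|q(D)f\|_{L^{p}(\Rn)}+\|\A_{s}(W_{h}f)\|_{L^{p}(\Sp)}$ (resp.\ the $p=\infty$ analogue). Splitting $Wf=W_{l}f+W_{h}f$ and using quasi-subadditivity of the tent quasi-norm, it suffices to control $\|W_{l}f\|_{T^{p}_{s}(\Sp)}$ by $\|q(D)f\|_{L^{p}(\Rn)}$. For this, note that if the decoupled quantity is finite then in particular $\A_{s}(W_{h}f)\in L^{p}(\Sp)$, so $W_{h}f\in T^{p}_{s}(\Sp)$ and hence $f-Vq(D)^{2}f$-type manipulations are legitimate; more directly, one writes $\rho(D)f=\rho(D)q(D)f$ and applies Lemma \ref{lem:Wlow} to get $\|W_{l}f\|_{T^{p}_{s}(\Sp)}\lesssim\|\rho(D)q(D)f\|_{L^{p}(\Rn)}\lesssim\|q(D)f\|_{L^{p}(\Rn)}$, where the last step for $p<1$ uses that convolution with $\F^{-1}\rho$ is bounded on $L^{p}$ of functions with Fourier support in a fixed compact set, by \cite[Theorem 1.4.1]{Triebel10} (as in the proof of Lemma \ref{lem:Wlow}), and for $p\geq 1$ simply Young's inequality. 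Combining this with $\|W_{h}f\|_{T^{p}_{s}(\Sp)}=\|\A_{s}(W_{h}f)\|_{L^{p}(\Sp)}$ and Proposition \ref{prop:HpFIOtent} gives $\|f\|_{\Hps}\lesssim\|Wf\|_{T^{p}_{s}(\Sp)}\lesssim\|W_{l}f\|_{T^{p}_{s}(\Sp)}+\|W_{h}f\|_{T^{p}_{s}(\Sp)}\lesssim\|q(D)f\|_{L^{p}(\Rn)}+\|\A_{s}(W_{h}f)\|_{L^{p}(\Sp)}$, which is the desired estimate; the same argument works for $p=\infty$ with $\Ca_{s}$ and $L^{\infty}$, using the $T^{\infty}_{s}(\Sp)$ analogue of Lemma \ref{lem:Wlow}.

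Conversely, for the remaining inequality $\|q(D)f\|_{L^{p}(\Rn)}+\|\A_{s}(W_{h}f)\|_{L^{p}(\Sp)}\lesssim\|f\|_{\Hps}$, I would use that $\|\A_{s}(W_{h}f)\|_{L^{p}(\Sp)}=\|W_{h}f\|_{T^{p}_{s}(\Sp)}\leq\|Wf\|_{T^{p}_{s}(\Sp)}\lesssim\|f\|_{\Hps}$ directly from Proposition \ref{prop:HpFIOtent}, together with the fact that $W_{l}f$ and $W_{h}f$ have disjoint $\sigma$-supports so their $T^{p}_{s}(\Sp)$ quasi-norms are each dominated by that of $Wf$ (for $p\geq 1$ this is obvious; for $p<1$ one uses that $\A_{s}(W_{h}f)\leq\A_{s}(Wf)$ pointwise since $|W_{h}f|\leq|Wf|$). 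For the term $\|q(D)f\|_{L^{p}(\Rn)}$, one invokes the second embedding in Theorem \ref{thm:Sobolev}: $\|q(D)f\|_{L^{p}(\Rn)}\lesssim\|q(D)f\|_{\HT^{s-s(p),p}(\Rn)}\lesssim\|f\|_{\HT^{s-s(p),p}(\Rn)}\lesssim\|f\|_{\Hps}$, using that $q(D)$ is a compactly supported Fourier multiplier, hence bounded on $\HT^{s-s(p),p}(\Rn)$ and mapping it into $L^{p}(\Rn)$ for the low-frequency part. I expect the main obstacle to be the bookkeeping for $p<1$: ensuring that $\rho(D)f$ (and $q(D)f$) can genuinely be treated via the compactly-supported-Fourier-support algebra/multiplier results of \cite{Triebel10} rather than Young's inequality, and checking that the $p=\infty$ ($\Ca_{s}$, $T^{\infty}_{s,0}$) version of Lemma \ref{lem:Wlow} is available with the constants uniform in $s$; both are routine but require care with the low-frequency cutoffs and the precise normalizations.
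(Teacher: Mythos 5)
Your proposal is correct and is essentially the argument the paper has in mind: the paper simply states that one follows the reasoning of \cite[Corollary 7.6]{HaPoRo20}, which amounts to exactly your combination of Proposition \ref{prop:HpFIOtent}, the decoupling of $W_{l}f$ and $W_{h}f$ in $T^{p}_{s}(\Sp)$, Lemma \ref{lem:Wlow} (and its $p\in[1,\infty]$ predecessor in \cite{HaPoRo20}), the band-limited multiplier bounds of \cite[Proposition 1.5.3]{Triebel10} for $p<1$, and the second embedding of Theorem \ref{thm:Sobolev} to control $\|q(D)f\|_{L^{p}(\Rn)}$. Note that the molecular decomposition you mention in your opening sentence is not actually used (nor needed) in the argument you then give; this is only a cosmetic inconsistency.
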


\begin{remark}\label{rem:equivnorm}
By reasoning as in \cite[Section 3.2]{Rozendaal21}, one can characterize $\Hp$ using a square function that captures the high frequencies of an $f\in\Sw'(\Rn)$ in terms of the wave packets $\theta_{\w,\sigma}$ from \eqref{eq:theta}:
\begin{equation}\label{eq:defS}
S_{h}f(x,\w):=\Big(\int_{0}^{1}\fint_{B_{\sqrt{\sigma}}(x,\w)}|\theta_{\nu,\sigma}(D)f(y)|^{2}\ud y\ud\nu\frac{\ud\sigma}{\sigma}\Big)^{1/2}
\end{equation}
for $(x,\w)\in\Sp$. Namely, for $0<p<\infty$, one has $f\in\Hp$ if and only if $q(D)f\in L^{p}(\Rn)$ and $S_{h}f\in L^{p}(\Sp)$, in which case
\begin{equation}\label{eq:equivnormtheta}
\|f\|_{\Hp}\eqsim \|q(D)f\|_{L^{p}(\Rn)}+\|S_{h}f\|_{L^{p}(\Sp)}.
\end{equation}
The natural modification holds for $p=\infty$ (see \cite[Corollary 3.8]{Rozendaal21} for $p\geq1$).

To prove \eqref{eq:equivnormtheta}, one first uses Corollary \ref{cor:FIOtent} to obtain a variation 
of Proposition \ref{prop:HpFIOtent} in terms of the $\theta_{\w,\sigma}$ (see \cite[Proposition 3.6]{Rozendaal21}). Then Theorem \ref{thm:Sobolev} can be applied in the same way as above to derive \eqref{eq:equivnormtheta}. 
\end{remark}

Finally, we include, as a direct corollary of Propositions \ref{prop:tentembedding} and \ref{prop:HpFIOdual}, a proposition about embeddings between Hardy spaces for Fourier integral operators with different integrability parameters.

\begin{proposition}\label{prop:fracintHpFIO}
Let $0<p_{0}\leq p_{1}\leq \infty$, $0<p_{2}<1$ and $s\in\R$. Then 
\[
\HT^{s+n(\frac{1}{p_{0}}-\frac{1}{p_{1}}),p_{0}}_{FIO}(\Rn)\subseteq \HT^{s,p_{1}}_{FIO}(\Rn)
\]
and 
\[
\HT^{-s+n(\frac{1}{p_{0}}+\frac{1}{p_{2}}-1),p_{0}}_{FIO}(\Rn)\subseteq (\HT^{s,p_{2}}_{FIO}(\Rn))^{*},
\]
continuously.
\end{proposition}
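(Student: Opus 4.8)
The plan is to move both inclusions to the tent-space side by means of the wave packet transform $W$, apply the tent space embedding of Proposition~\ref{prop:tentembedding}, and move back. First I would recall, via Proposition~\ref{prop:HpFIOtent}, that for every $0<p\leq\infty$ and $t\in\R$ an $f\in\Sw'(\Rn)$ lies in $\HT^{t,p}_{FIO}(\Rn)$ precisely when $Wf\in T^{p}_{t}(\Sp)$, with equivalent quasi-norms (here $T^{\infty}_{t}(\Sp)=T^{\infty}_{t,0}(\Sp)$). The one structural point to note is that, by the definition \eqref{eq:defW} of $W$, one has $Wf(x,\w,\sigma)=0$ whenever $\sigma>e$; this is exactly the support hypothesis under which Proposition~\ref{prop:tentembedding} is available, so no further work is needed to invoke it.

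For the first inclusion I would take $f\in\HT^{s+n(\frac{1}{p_{0}}-\frac{1}{p_{1}}),p_{0}}_{FIO}(\Rn)$, equivalently $Wf\in T^{p_{0}}_{s+n(\frac{1}{p_{0}}-\frac{1}{p_{1}})}(\Sp)$, and apply the first assertion of Proposition~\ref{prop:tentembedding} with the smoothness parameter there set to $s+n(\frac{1}{p_{0}}-\frac{1}{p_{1}})$; the loss $n(\frac{1}{p_{0}}-\frac{1}{p_{1}})$ is chosen exactly so that the conclusion is $Wf\in T^{p_{1}}_{s}(\Sp)$, with the corresponding norm bound, and this case covers $p_{1}=\infty$ as well. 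Proposition~\ref{prop:HpFIOtent} then returns $f\in\HT^{s,p_{1}}_{FIO}(\Rn)$ with control of the quasi-norm.

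For the second inclusion I would first use Proposition~\ref{prop:HpFIOdual} to identify $(\HT^{s,p_{2}}_{FIO}(\Rn))^{*}$, for $0<p_{2}<1$, with the space of $f\in\Sw'(\Rn)$ such that $Wf\in T^{\infty}_{-s,1/p_{2}-1}(\Sp)$, with comparable quasi-norms. Then, starting from $f\in\HT^{-s+n(\frac{1}{p_{0}}+\frac{1}{p_{2}}-1),p_{0}}_{FIO}(\Rn)$, i.e.\ $Wf\in T^{p_{0}}_{t}(\Sp)$ with $t=-s+n(\frac{1}{p_{0}}+\frac{1}{p_{2}}-1)$, I would apply the second assertion of Proposition~\ref{prop:tentembedding} with $\alpha=\frac{1}{p_{2}}-1\geq 0$; the arithmetic identity $t-n(\alpha+\frac{1}{p_{0}})=-s$ shows that the conclusion is precisely $Wf\in T^{\infty}_{-s,1/p_{2}-1}(\Sp)$, with the corresponding bound, and hence $f\in(\HT^{s,p_{2}}_{FIO}(\Rn))^{*}$.

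Since Propositions~\ref{prop:tentembedding}, \ref{prop:HpFIOtent} and \ref{prop:HpFIOdual} do all the work, there is no genuine obstacle here: the proof is essentially bookkeeping with the smoothness indices. The only points that deserve a moment's attention are checking that $Wf(\cdot,\cdot,\sigma)$ vanishes for $\sigma>e$ so that Proposition~\ref{prop:tentembedding} applies, and selecting the correct weight parameter $\alpha=\frac{1}{p_{2}}-1$ in the dual case so that the smoothness index produced by Proposition~\ref{prop:tentembedding} matches the one appearing in Proposition~\ref{prop:HpFIOdual}.
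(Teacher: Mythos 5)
Your proposal is correct and matches the paper's approach: the paper presents this proposition as a direct corollary of Propositions~\ref{prop:tentembedding} and \ref{prop:HpFIOdual}, and your argument supplies exactly the intended bookkeeping, passing through Proposition~\ref{prop:HpFIOtent}, noting the vanishing of $Wf$ for $\sigma>e$, and choosing $\alpha=1/p_{2}-1$ in the dual case.
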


\subsection{Invariance under Fourier integral operators}\label{subsec:invariance`}

The following theorem extends \cite[Theorem 6.10]{HaPoRo20} and \cite[Proposition 3.3]{Rozendaal22} to $p<1$.

\begin{theorem}\label{thm:FIObdd}
Let $0<p\leq\infty$ and $s\in\R$, and let $T$ be a Fourier integral operator of order $0$ and type (1/2,1/2,1) in standard form, associated with a global canonical graph, with symbol $a\in S^{0}_{1/2,1/2,1}$ and phase function $\Phi\in C^{\infty}(\R^{2n}\setminus o)$. Suppose that one of the following conditions holds:
\begin{enumerate}
\item\label{it:FIObdd1} $p>\frac{n}{n+1}$;
\item\label{it:FIObdd2} there exists a compact $K\subseteq\Rn$ such that $a(x,\eta)=0$ for all $(x,\eta)\in\R^{2n}$ with $x\notin K$;
\item\label{it:FIObdd3} $(x,\eta)\mapsto \Phi(x,\eta)$ is linear in $\eta$.
\end{enumerate}
Then there exists a $C\geq0$ such that $\|Tf\|_{\Hps}\leq C\|f\|_{\Hps}$ for all $f\in\Sw(\Rn)$.
\end{theorem}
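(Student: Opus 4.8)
\emph{Reduction to tent spaces.} For $f\in\Sw(\Rn)$ the operator $T$ gives $Tf\in\Sw'(\Rn)$, and since $VWf=f$ by \eqref{eq:repro} while $Wf\in\J(\Spp)$ by Proposition \ref{prop:transforms}, one has $WTf=(WTV)(Wf)$. Hence Proposition \ref{prop:HpFIOtent} gives $\|Tf\|_{\Hps}\eqsim\|WTf\|_{T^{p}_{s}(\Sp)}=\|(WTV)(Wf)\|_{T^{p}_{s}(\Sp)}$, so it suffices to bound $WTV$ on $T^{p}_{s}(\Sp)$, or to split $T$ into summands and bound those separately. Under condition \eqref{it:FIObdd3}, where $\Phi$ is linear in $\eta$, Corollary \ref{cor:FIOtent} gives $WTV\in\La(T^{p}_{s}(\Sp))$ directly, and the proof is complete in that case.

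\emph{Splitting off the low frequencies.} For conditions \eqref{it:FIObdd1} and \eqref{it:FIObdd2}, fix $\chi\in C^{\infty}_{c}(\Rn)$ with $\chi\equiv1$ on $\{|\eta|\leq1\}$ and $\supp(\chi)\subseteq\{|\eta|\leq2\}$, and write $T=T_{h}+T_{l}$, where $T_{h}$ and $T_{l}$ are the oscillatory integral operators with the same phase $\Phi$ and with symbols $a(x,\eta)(1-\chi(\eta))$ and $a(x,\eta)\chi(\eta)$, respectively. Since $a\cdot(1-\chi)\in S^{0}_{1/2,1/2,1}$ vanishes for $|\eta|\leq1$ and the conditions \eqref{it:phase1}--\eqref{it:phase4} on $\Phi$ are unaffected, $T_{h}$ is again a Fourier integral operator of order $0$ and type $(1/2,1/2,1)$ in standard form associated with a global canonical graph, whose symbol vanishes near the origin; so Corollary \ref{cor:FIOtent} applies and, arguing as above, $\|T_{h}f\|_{\Hps}\lesssim\|f\|_{\Hps}$. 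It remains to estimate $T_{l}$.

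\emph{The low-frequency operator.} As the symbol of $T_{l}$ is supported in $\{|\eta|\leq2\}$, one has $T_{l}f=T_{l}(q(D)f)$, while $\|q(D)f\|_{L^{p}(\Rn)}\lesssim\|f\|_{\Hps}$ by Corollary \ref{cor:equivalentnorm}; so it suffices to show $\|T_{l}g\|_{\Hps}\lesssim\|g\|_{\HT^{p}(\Rn)}$ for $g\in\Sw(\Rn)$ with $\supp(\widehat{g}\,)\subseteq\{|\xi|\leq2\}$, since for such a band-limited $g$ one has $\|g\|_{\HT^{p}(\Rn)}\eqsim\|g\|_{L^{p}(\Rn)}$ (on band-limited functions all the quasi-norms in Theorem \ref{thm:Sobolev} coincide with $\|\cdot\|_{L^{p}(\Rn)}$). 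I would prove the latter estimate by decomposing $g$ into local $\HT^{p}(\Rn)$-atoms and bounding $\|T_{l}\mathfrak{a}\|_{\Hps}$ uniformly over atoms $\mathfrak{a}$, controlling the high-frequency part through the square-function characterization of $\Hps$ (Corollary \ref{cor:equivalentnorm}, Remark \ref{rem:equivnorm}). The Schwartz kernel $K(x,y)=\int_{\Rn}e^{i(\Phi(x,\eta)-y\cdot\eta)}a(x,\eta)\chi(\eta)\ud\eta$ of $T_{l}$ is smooth away from $\eta=0$; a dyadic decomposition of the $\eta$-integral together with integration by parts, using that the global bijectivity forced by \eqref{it:phase3} gives $|\partial_{\eta}\Phi(x,\eta)-y|\gtrsim\max(|x|,|y|)$ once $\max(|x|,|y|)$ is large, shows that $K$ and its derivatives decay at the merely polynomial rate $(1+\max(|x|,|y|))^{-n}$, the obstruction to faster decay being precisely the non-smoothness of $\Phi$ at $\eta=0$. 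Under condition \eqref{it:FIObdd2}, the compact spatial support of $a$ confines $T_{l}\mathfrak{a}$ to a fixed compact set, on which the boundedness of $K$ is enough to close the estimate. Under condition \eqref{it:FIObdd1}, the hypothesis $p>\tfrac{n}{n+1}$ is exactly what makes the available decay of $K$ interact with the cancellation of the atoms $\mathfrak{a}$ to give the bound, because for such $p$ the atoms need only have a vanishing moment of order zero.

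\emph{Main obstacle.} The entire difficulty is concentrated in the low-frequency piece $T_{l}$. For $p\geq1$ it is handled by elementary $L^{p}$-boundedness; for $p<1$ the Hardy--Littlewood maximal operator fails to be bounded on $L^{p}(\Rn)$, which is why one passes through the band-limitedness/Plancherel--P\'olya reduction and the atomic decomposition, and the merely polynomial decay of $K$ caused by the singularity of $\Phi$ at $\eta=0$ is what pins the admissible range at $p>\tfrac{n}{n+1}$ in the absence of compact spatial support of the symbol.
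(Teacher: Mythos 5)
Your proof follows the paper's decomposition almost exactly: split off the low-frequency contribution, treat the high-frequency part (and case \eqref{it:FIObdd3} entirely) via Corollary \ref{cor:FIOtent} together with Proposition \ref{prop:HpFIOtent} and \eqref{eq:repro}, and then deal with the low-frequency piece separately. The only divergence is in how the low-frequency piece is handled. The paper cites \cite[Theorem~6.5]{IsRoSt21} to obtain $\|Tq(D)f\|_{\HT^{s+s(p),p}(\Rn)}\lesssim\|f\|_{\HT^{s-s(p),p}(\Rn)}$, and then sandwiches this via the Sobolev embeddings of Theorem~\ref{thm:Sobolev} on both sides. You instead sketch a direct argument: observe $T_{l}f=T_{l}q(D)f$, reduce to band-limited $g$ with $\|g\|_{\HT^{p}(\Rn)}\eqsim\|g\|_{L^{p}(\Rn)}$, decompose into local $\HT^{p}$-atoms, and estimate $\|T_{l}\mathfrak{a}\|_{\Hps}$ by kernel bounds. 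In effect you propose to reprove the relevant case of \cite{IsRoSt21}; the paper's route is shorter because it outsources precisely that estimate.

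There is, however, a concrete misstatement in your kernel discussion. You claim that conditions \eqref{it:phase2}--\eqref{it:phase3} force $|\partial_{\eta}\Phi(x,\eta)-y|\gtrsim\max(|x|,|y|)$ once $\max(|x|,|y|)$ is large. That is false: $\partial_{\eta}\Phi$ is homogeneous of degree $0$ in $\eta$, and by \eqref{it:phase2}--\eqref{it:phase3} one has $|\partial_{\eta}\Phi(x,\hat\eta)|\eqsim 1+|x|$, so setting $y=\partial_{\eta}\Phi(x,\eta)$ makes the left-hand side vanish while $|x|$ and $|y|$ are both arbitrarily large. Thus $K(x,y)$ does \emph{not} decay in $\max(|x|,|y|)$ along the canonical relation. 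What the dyadic integration-by-parts argument actually yields is a bound of the form $|K(x,y)|\lesssim\big(1+\inf_{0<|\eta|\leq 2}|\partial_{\eta}\Phi(x,\eta)-y|\big)^{-n}$, which gives $|K(x,y)|\lesssim(1+|x|)^{-n}$ once $y$ is confined to a fixed ball and $|x|$ is large --- and this local version is enough for the atomic estimate, since atoms have bounded support. The rest of the mechanism you identify (the critical exponent $n$ coming from the singularity of $\Phi$ at $\eta=0$, the need for only zeroth-order cancellation when $p>\tfrac{n}{n+1}$, the trivialization under the compact spatial support in case \eqref{it:FIObdd2}) is the right picture, but the decay claim as written would send anyone trying to complete the sketch in the wrong direction.
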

\begin{proof}
By Corollary \ref{cor:FIOtent}, $WT(1-q(D))V\in\La(T^{p}_{s}(\Sp))$. Hence Proposition \ref{prop:HpFIOtent} and \eqref{eq:repro} imply that $T(1-q(D))\in \La(\Hps)$. The same reasoning shows that $Tq(D)\in \La(\Hps)$ in case \eqref{it:FIObdd3}.

On the other hand, in cases \eqref{it:FIObdd1} and \eqref{it:FIObdd2}, $\|Tq(D)f\|_{\HT^{s+s(p),p}(\Rn)}\lesssim \|f\|_{\HT^{s-s(p),p}(\Rn)}$ for all $f\in\Sw(\Rn)$, by \cite[Theorem 6.5]{IsRoSt21}. Hence Theorem \ref{thm:Sobolev} yields
\[
\|Tq(D)f\|_{\Hps}\lesssim \|Tq(D)f\|_{\HT^{s+s(p),p}(\Rn)}\lesssim \|f\|_{\HT^{s-s(p),p}(\Rn)}\lesssim \|f\|_{\Hps}.\qedhere
\]
\end{proof}

\begin{remark}\label{rem:extensioninfty}
By density, $T$ extends uniquely to a bounded operator on $\Hps$ if $p<\infty$.  
Moreover, if $T$ is a pseudodifferential operator, then the adjoint of $T$ is continuous on $\Sw(\Rn)$, so that $T$ extends to an operator on $\Sw'(\Rn)$ by adjoint action. Then \eqref{it:FIObdd3} shows that $T\in\La(\HT^{s,\infty}_{FIO}(\Rn))$. 
\end{remark}

\begin{remark}\label{rem:FIOlocalbdd}
One can also extend \cite[Theorem 3.7]{Smith98a} and \cite[Proposition 2.3]{LiRoSoYa24} to $p<1$, by showing that a compactly supported Fourier integral operator $T$ of order zero as in \cite[Section 25.2]{Hormander09}, associated with a local canonical graph, is bounded on $\Hps$ for all $0<p\leq\infty$ and $s\in\R$. To do so, recall that $T$ is a finite sum of a product of operators which can be represented as in \eqref{eq:oscint} and have symbols with compact spatial support, plus an operator with a Schwartz kernel which is a Schwartz function (see \cite[Proposition 2.14]{HaPoRo20}). An operator of the latter kind is bounded on $\Hps$ by Theorem \ref{thm:Sobolev}. On the other hand, an operator as in \eqref{eq:oscint} whose symbol has compact spatial support can be dealt with in a similar way as above, using a modification of Corollary \ref{cor:FIOtent} (see \cite[Corollary 5.4]{HaPoRo20}) to deal with the high-frequency component, and \cite[Theorem 6.5]{IsRoSt21} for the low frequencies. 
\end{remark}

By combining Theorems \ref{thm:Sobolev} and \ref{thm:FIObdd}, for Hardy spaces we obtain the following extension of \cite[Theorem 6.1]{IsRoSt21}, to a larger class of symbols but under the additional assumption in \eqref{it:phase4} of Definition \ref{def:operator} (which is automatically satisfied for $n\geq3$).

\begin{corollary}\label{cor:FIObdd}
Let $0<p\leq\infty$ and $s\in\R$, and let $T$ be as in Theorem \ref{thm:FIObdd}. Then there exists a $C\geq0$ such that $\|Tf\|_{\HT^{s-s(p),p}(\Rn)}\leq C\|f\|_{\HT^{s+s(p),p}(\Rn)}$ for all $f\in\Sw(\Rn)$.
\end{corollary}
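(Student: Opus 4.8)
The plan is to deduce Corollary \ref{cor:FIObdd} by chaining the two displayed inclusions of Theorem \ref{thm:Sobolev} with the invariance from Theorem \ref{thm:FIObdd}. Concretely, given $f\in\Sw(\Rn)$, I would first apply the left-hand embedding in \eqref{eq:Sobolev} (with exponent $s$ and integrability $p$) to pass from $\HT^{s+s(p),p}(\Rn)$ into $\Hps$, i.e. $\|f\|_{\Hps}\lesssim\|f\|_{\HT^{s+s(p),p}(\Rn)}$. Then Theorem \ref{thm:FIObdd} gives $\|Tf\|_{\Hps}\lesssim\|f\|_{\Hps}$ — here one needs $T$ to satisfy one of the hypotheses \eqref{it:FIObdd1}--\eqref{it:FIObdd3}, but this is exactly the standing assumption ``$T$ as in Theorem \ref{thm:FIObdd}.'' Finally the right-hand embedding in \eqref{eq:Sobolev} yields $\|Tf\|_{\HT^{s-s(p),p}(\Rn)}\lesssim\|Tf\|_{\Hps}$. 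Concatenating the three estimates produces the claimed bound with a constant $C$ depending only on $n,p,s$ and $T$.

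So the proof is essentially a one-line composition:
\begin{align*}
\|Tf\|_{\HT^{s-s(p),p}(\Rn)}\lesssim\|Tf\|_{\Hps}\lesssim\|f\|_{\Hps}\lesssim\|f\|_{\HT^{s+s(p),p}(\Rn)}.
\end{align*}
I would spell out that the first and last $\lesssim$ are instances of Theorem \ref{thm:Sobolev} (the second inclusion applied to $Tf$, and the first inclusion applied to $f$, respectively), while the middle $\lesssim$ is Theorem \ref{thm:FIObdd}. It is worth a remark that all the quantities here are finite and the manipulations legitimate because $f\in\Sw(\Rn)\subseteq\Hps$ (the Schwartz inclusion noted after Proposition \ref{prop:HpFIOtent}) and $Tf\in\Sw(\Rn)$ as well when $T$ is, say, of the standard form considered, or at least $Tf$ lies in the relevant spaces by the very statements being invoked; in any case Theorem \ref{thm:FIObdd} is stated for $f\in\Sw(\Rn)$, matching the hypothesis of the corollary exactly.

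There is essentially no obstacle here — the corollary is a formal consequence of two results proved earlier in the section, and the only thing to be careful about is bookkeeping of the Sobolev exponents: one must apply Theorem \ref{thm:Sobolev} at the \emph{same} value of $s$ both times, using that $\Hps$ sits between $\HT^{s+s(p),p}(\Rn)$ and $\HT^{s-s(p),p}(\Rn)$, so that the loss of $s(p)$ derivatives coming from the FIO is split symmetrically around $\Hps$. If anything needs emphasis, it is that the role of the Hardy space $\Hps$ is precisely to ``absorb'' the invariance, and \eqref{eq:Sobolev} then translates this back into the sharp Sobolev-over-local-Hardy-space statement, recovering (and mildly extending, by allowing the larger symbol class $S^0_{1/2,1/2,1}$ and, via \eqref{it:phase4}, dropping compact support for $n\geq 3$) the bound of \cite[Theorem 6.1]{IsRoSt21}.
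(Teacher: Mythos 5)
Your proof is correct and is exactly the argument the paper intends: the corollary is stated in the text as a direct consequence of combining Theorems \ref{thm:Sobolev} and \ref{thm:FIObdd}, which is precisely your three-step chain $\|Tf\|_{\HT^{s-s(p),p}(\Rn)}\lesssim\|Tf\|_{\Hps}\lesssim\|f\|_{\Hps}\lesssim\|f\|_{\HT^{s+s(p),p}(\Rn)}$.
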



\begin{remark}\label{rem:Euclidwave}
The condition on $p$ in \eqref{it:FIObdd1} of Theorem \ref{thm:FIObdd} is sharp in general, given that it is necessary in Corollary \ref{cor:FIObdd} for $T=e^{i\sqrt{-\Delta}}$, even in dimension $n=1$ (see \cite[Section 9]{IsRoSt21}). In particular, the Cauchy problem associated with the Euclidean half-wave equation $(\partial_{t}-i\sqrt{-\Delta})u(t)=0$ is not well posed on $\Hps$ for any $0<p\leq n/(n+1)$ and $s\in\R$. 
The obstacle is insufficient integrability of the inverse Fourier transform of $\eta\mapsto e^{i|\eta|}q(\eta)$, resulting from the lack of smoothness at $\eta=0$.

On the other hand, $\eta\mapsto \cos(t|\eta|)q(\eta)$ and $\eta\mapsto |\eta|^{-1}\sin(t|\eta|)q(\eta)$ are smooth at zero for all $t\in\R$, and their inverse Fourier transforms lie in $L^{p}(\Rn)$ for all $p>0$. 
Hence, by the proof of Theorem \ref{thm:FIObdd} and \cite[Proposition 1.5.3]{Triebel10}, $\cos(t\sqrt{-\Delta})\in\La(\Hps)$ and $\frac{\sin(t\sqrt{-\Delta})}{\sqrt{-\Delta}}\in\La(\HT^{s-1,p}_{FIO}(\Rn),\Hps)$ for all 
$s\in\R$, and the Cauchy problem for the Euclidean wave equation is well posed on $\Hps$. By combining this with Theorem \ref{thm:Sobolev} and using that the low-frequency components of Triebel--Lizorkin and Besov space quasi-norms are equivalent, one sees that the main result of \cite{IsRoSt21} for the Euclidean wave equation in fact extends to all $p>0$. 
\end{remark}

\subsection{Equivalent characterizations}\label{subsec:charac}

In this subsection we prove an extension of the main results of \cite{FaLiRoSo23,Rozendaal21} to $p<1$, by giving an equivalent characterization of $\Hp$ in terms of the parabolic localizations $\ph_{\w}$ from \eqref{eq:phw}.

For $N,\sigma>0$, we will work with the maximal function $M_{N,\sigma}^{*}$, given by
\[
M_{N,\sigma}^{*}{g}(x,\omega)
:=\sup_{(y,\nu)\in \Sp}(1+\sigma^{-1}d((x,\omega),(y,\nu))^{2})^{-N}|{g}(y,\nu)|
\]
for ${g}:\Sp\to\C$ and $(x,\w)\in\Sp$. Also write
\[
W_{\sigma}f(x,\w):=\theta_{\w,\sigma}(D)f(x)
\]
for $f\in\Sw'(\Rn)$, where $\theta_{\w,\sigma}$ is as in \eqref{eq:theta}. The following proposition, relating $M_{N,\sigma}^{*}$ to the maximal function $\Ma_{\la}$ from \eqref{eq:maxHL}, is crucial for the proof of our equivalent characterization, especially for $p\leq 1$.

\begin{proposition}\label{prop:maxineq}
Let $\lambda>0$ and $N>n/\la$. Then there exists a $C\geq0$ such that
\[
M_{N,\sigma}^{*}(W_{\sigma}f)(x,\omega)\leq C \mathcal{M}_{\lambda}(W_{\sigma}f)(x,\omega)
\]
for all $f\in \Sw'(\Rn)$, $(x,\w)\in\Sp$ and $\sigma\in(0,1)$.
\end{proposition}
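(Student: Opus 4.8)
The plan is to derive the inequality from a reproducing formula for $W_{\sigma}f$ on $\Sp$, combined with the pointwise control of averages in \eqref{eq:HLcontrol}, following the scheme of the Peetre maximal function inequality adapted to the parabolic scaling at height $\sigma$.

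The first and main step is to establish a reproducing formula: I would show that for each $\sigma\in(0,1)$ there is a kernel $K_{\sigma}\colon\Sp\times\Sp\to\C$ with
\[
W_{\sigma}f(y,\nu)=\int_{\Sp}K_{\sigma}\big((y,\nu),(z,\nu')\big)\,W_{\sigma}f(z,\nu')\,\ud z\,\ud\nu'
\]
for all $f\in\Sw'(\Rn)$ and $(y,\nu)\in\Sp$, and such that for every $M\geq0$ there is a $C_{M}\geq0$, independent of $\sigma$, with
\[
\big|K_{\sigma}\big((y,\nu),(z,\nu')\big)\big|\leq C_{M}\,\sigma^{-n}\big(1+\sigma^{-1}d((y,\nu),(z,\nu'))^{2}\big)^{-M}.
\]
To this end I would write $W_{\sigma}f(\cdot,\nu)=\varphi_{\nu}(D)\Psi(\sigma D)f$ and note that $h:=\Psi(\sigma D)f$ has Fourier support in $\{\tfrac12\sigma^{-1}\leq|\xi|\leq2\sigma^{-1}\}\subseteq\{|\xi|\geq\tfrac12\}$ since $\sigma<1$. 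Hence \eqref{eq:phiproperties3} applies to $h$ and, using $\varphi_{\nu'}(D)h=W_{\sigma}f(\cdot,\nu')$, yields
\[
W_{\sigma}f(\cdot,\nu)=\varphi_{\nu}(D)h=\int_{S^{n-1}}\varphi_{\nu}(D)\,m(D)\big(W_{\sigma}f(\cdot,\nu')\big)\,\ud\nu'.
\]
Since $W_{\sigma}f(\cdot,\nu')$ has Fourier support in $\supp(\theta_{\nu',\sigma})$ by Lemma \ref{lem:packetbounds}, one may insert fattened cutoffs $\widetilde{\Psi}(\sigma D)$ and $\widetilde{\varphi}_{\nu'}(D)$ equal to $1$ there, so that $K_{\sigma}((y,\nu),(z,\nu'))$ is the Schwartz kernel at $y-z$ of the Fourier multiplier with symbol $\varphi_{\nu}\,m\,\widetilde{\Psi}(\sigma\cdot)\,\widetilde{\varphi}_{\nu'}$. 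By properties \eqref{it:phiproperties1}--\eqref{it:phiproperties2} of $(\varphi_{\nu})_{\nu}$ (and the analogues for $\widetilde{\varphi}_{\nu'}$) and the symbol bound on $m$, this symbol vanishes unless $|\nu-\nu'|\lesssim\sqrt{\sigma}$, and where it does not vanish it agrees, up to a factor $\sigma^{-(n-1)/4}$ and a bounded parabolic cutoff, with the functions $\widetilde{\theta}_{\nu,\sigma}$ from Remark \ref{rem:thetatilde} (with slightly enlarged support). Thus the estimate \eqref{eq:boundspsiinverse} gives
\[
\big|\F^{-1}\big(\varphi_{\nu}\,m\,\widetilde{\Psi}(\sigma\cdot)\,\widetilde{\varphi}_{\nu'}\big)(w)\big|\leq C_{M}\,\sigma^{-n}\big(1+\sigma^{-1}|w|^{2}+\sigma^{-2}(\nu\cdot w)^{2}\big)^{-M},
\]
and the claimed bound on $K_{\sigma}$ follows after rewriting the weight in terms of $d$ by \eqref{eq:equivmetric} together with the restriction $|\nu-\nu'|\lesssim\sqrt{\sigma}$.

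Granting the reproducing formula, the maximal estimate would follow by a standard self-improvement argument. Set $g:=W_{\sigma}f$, and for points of $\Sp$ write $\mathbf{x}:=(x,\omega)$, $\mathbf{y}:=(y,\nu)$, $\mathbf{z}:=(z,\nu')$, and $\varrho(\mathbf{a},\mathbf{b}):=\sigma^{-1}d(\mathbf{a},\mathbf{b})^{2}$. Since $d$ is a metric, $1+\varrho(\mathbf{x},\mathbf{z})\leq2(1+\varrho(\mathbf{x},\mathbf{y}))(1+\varrho(\mathbf{y},\mathbf{z}))$, hence $(1+\varrho(\mathbf{x},\mathbf{y}))^{-A}\leq2^{A}(1+\varrho(\mathbf{x},\mathbf{z}))^{-A}(1+\varrho(\mathbf{y},\mathbf{z}))^{A}$ for all $A>0$; also $\int_{\Sp}|K_{\sigma}(\mathbf{y},\mathbf{z})|\,\ud z\,\ud\nu'\leq C$ uniformly, by Lemma \ref{lem:doubling}. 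When $\lambda\geq1$: by Jensen, $|g(\mathbf{y})|^{\lambda}\leq C^{\lambda-1}\int_{\Sp}|K_{\sigma}(\mathbf{y},\mathbf{z})|\,|g(\mathbf{z})|^{\lambda}\,\ud z\,\ud\nu'$; multiplying by $(1+\varrho(\mathbf{x},\mathbf{y}))^{-N\lambda}$, splitting the weight with $A=N\lambda$, absorbing $(1+\varrho(\mathbf{y},\mathbf{z}))^{N\lambda}$ into the decay of $K_{\sigma}$, and applying \eqref{eq:HLcontrol} (legitimate as $N\lambda>n$) gives $(1+\varrho(\mathbf{x},\mathbf{y}))^{-N}|g(\mathbf{y})|\lesssim\mathcal{M}(|g|^{\lambda})(\mathbf{x})^{1/\lambda}=\mathcal{M}_{\lambda}(g)(\mathbf{x})$, and taking the supremum over $\mathbf{y}\in\Sp$ finishes this case. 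When $0<\lambda<1$: instead bound, inside the reproducing formula, $|g(\mathbf{z})|\leq M_{N,\sigma}^{*}(g)(\mathbf{x})^{1-\lambda}(1+\varrho(\mathbf{x},\mathbf{z}))^{N(1-\lambda)}|g(\mathbf{z})|^{\lambda}$; the same weight split and \eqref{eq:HLcontrol} (again with exponent $N\lambda>n$) yield
\[
M_{N,\sigma}^{*}(g)(\mathbf{x})\leq C\,M_{N,\sigma}^{*}(g)(\mathbf{x})^{1-\lambda}\,\mathcal{M}_{\lambda}(g)(\mathbf{x})^{\lambda},
\]
and dividing by $M_{N,\sigma}^{*}(g)(\mathbf{x})^{1-\lambda}$ gives the claim.

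The main obstacle will be Step 1 --- the construction of $K_{\sigma}$ and the verification of its parabolic decay uniformly in $\sigma\in(0,1)$ --- although this is a routine repackaging of the wave-packet kernel estimates already available in \cite{HaPoRo20,Rozendaal21}. A minor technical point in the case $\lambda<1$ is that one may divide by $M_{N,\sigma}^{*}(g)(\mathbf{x})^{1-\lambda}$ only once this quantity is known to be finite; since $\theta_{\nu,\sigma}(D)f$ has at most polynomial growth for $f\in\Sw'(\Rn)$, this is handled by a standard truncation (replace $g$ by $g\,(1+\epsilon|\cdot|^{2})^{-A}$ with $A$ large, run the argument with constants independent of $\epsilon>0$, and let $\epsilon\downarrow0$).
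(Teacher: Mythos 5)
Your proposal is correct and takes essentially the same route as the paper: the reproducing kernel you build from \eqref{eq:phiproperties3} and the fattened cutoffs is, up to the normalization $\sigma^{-(n-1)/4}$ and the implicit angular restriction $|\nu-\nu'|\lesssim\sqrt{\sigma}$, the kernel $\F^{-1}\tilde{\theta}_{\nu,\sigma}(y-z)$ underlying the paper's identity \eqref{eq:reprotheta}, and your Peetre-type self-improvement bound together with the truncation device for making the division rigorous both appear, in close form, in the paper's proof. The only noteworthy differences are minor: you treat $\lambda\geq1$ separately via Jensen's inequality (the paper's factorization $|g|=|g|^{1-\lambda}|g|^{\lambda}$, in which $|g|^{1-\lambda}(1+\sigma^{-1}d^{2})^{-N(1-\lambda)}$ is absorbed into $(M^*_{N,\sigma})^{1-\lambda}$, literally requires $1-\lambda\geq0$), and for a priori finiteness you propose truncating $g$ by a polynomial weight, whereas the paper keeps $g$ intact and instead inserts the extra $\varepsilon$-dependent factor $(1+\varepsilon\sigma^{-1}d^{2})^{-M}$ into the maximal function itself, defining $M^*_{N,\sigma,\varepsilon}$, which is a little cleaner since it leaves the reproducing identity for $g$ untouched rather than requiring one to check that a reproducing inequality survives the truncation.
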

\begin{proof}
Fix $f\in\Sw'(\Rn)$, $(x,\w)\in\Sp$ and $\sigma\in (0,1)$. 
%
First note that, by \eqref{eq:phiproperties3} and Remark \ref{rem:thetatilde}, 
\begin{equation}\label{eq:reprotheta}
\theta_{\nu,\sigma}(D)f(y)=\sigma^{-\frac{n-1}{4}}\int_{S^{n-1}}\tilde{\theta}_{\nu,\sigma}(D)\theta_{\mu,\sigma}(D)f(y) \ud\mu
\end{equation}
for all $(y,\nu)\in\Sp$. In fact, by Lemma \ref{lem:packetbounds} and Remark \ref{rem:thetatilde}, the integrand is only nonzero where $|\mu-\nu|\leq 4\sqrt{\sigma}$. In the latter case, Remark \ref{rem:thetatilde} and \eqref{eq:equivmetric} yield
\begin{align*}
|\F^{-1}(\tilde{\theta}_{\nu,\sigma})(y-z)|&\lesssim \sigma^{-\frac{3n+1}{4}}(1+\sigma^{-1}d((y,\nu),(z,\nu))^{2})^{-N}\\
&\lesssim \sigma^{-\frac{3n+1}{4}}(1+\sigma^{-1}d((y,\nu),(z,\mu))^{2})^{-N}
\end{align*}
for all $z\in\Rn$. Hence we can use the triangle inequality to write
\begin{align*}
&(1+\sigma^{-1}d((x,\omega),(y,\nu))^{2})^{-N}|\theta_{\nu,\sigma}(D)f(y)|\\
&\lesssim\sigma^{-n}\int_{\Sp}\frac{|\theta_{\mu,{\sigma}}(D)f(z)|}{(1+\sigma^{-1}d((x,\omega),(y,\nu))^{2})^{N}(1+\sigma^{-1}d((y,\nu),(z,\mu))^{2})^{N}}
\ud z\ud\mu\\
&\lesssim \sigma^{-n}\int_{\Sp}\frac{|
\theta_{\mu,\sigma}(D)f(z)|^{1-\lambda}|\theta_{\mu,\sigma}(D)f(z)|^{\lambda} }{(1+\sigma^{-1}d((x,\omega),(z,\mu))^{2})^{N}}\ud z\ud\mu.
\end{align*}
By combining this with \eqref{eq:HLcontrol}, we see that
\begin{align*}
&M_{N,\sigma}^{*}(W_{\sigma}f)(x,\omega)\\
&\lesssim \sigma^{-n}\int_{\Sp}\frac{|
\theta_{\mu,\sigma}(D)f(z)|^{1-\lambda} |\theta_{\mu,\sigma}(D)f(z)|^{\lambda}}{(1+\sigma^{-1}d((x,\omega),(z,\mu))^{2})^{N(1-\la)}(1+\sigma^{-1}d((x,\omega),(z,\mu))^{2})^{N\la}}\ud z\ud\mu\\
&\leq \big(M_{N,\sigma}^{*}(W_{\sigma}f)(x,\omega)\big)^{1-\lambda}
{\sigma}^{-n}\int_{\Sp}\frac{|\theta_{\mu,\sigma}(D)f(z)|^{\lambda}}{(1+\sigma^{-1}d((x,\omega),(z,\mu))^{2})^{N\lambda}}\ud z\ud\mu\\
&\leq \big(M_{N,\sigma}^{*}(W_{\sigma}f)(x,\omega)\big)^{1-\lambda}
\big(\Ma_{\lambda}(W_{\sigma}f)(x,\omega)\big)^{\lambda}.
\end{align*}
This proves the required statement if $M_{N,\sigma}^{*}(W_{\sigma}f)(x,\omega)<\infty$, and it remains to show that $M_{N,\sigma}^{*}(W_{\sigma}f)(x,\w)$ is finite whenever $\mathcal{M}_{\lambda}(W_{\sigma}f)(x,\omega)$ is finite.

To this end, first write 
\[
\theta_{\nu,\sigma}(D)f(y)=\theta_{\nu,\sigma}(D)\wt{\Psi}(\sigma D)f(y)
=\int_{\Rn}\F^{-1}\theta_{\nu,\sigma}(y-z)\wt{\Psi}(\sigma D)f(z)\ud z
\]
for $(y,\nu)\in\Sp$, where we recall from Remark \ref{rem:thetatilde} that $\wt{\Psi}\equiv1$ on $\supp(\Psi)$. Since $\wt{\Psi}\in \Sw(\Rn)$ and $f\in\Sw'(\Rn)$, there exists an $M\geq0$, dependent on $f$, such that
\[
\sup_{z\in\Rn}\lb z\rb^{-2M}|\wt{\Psi}(\sigma D)f(z)|<\infty.
\]
Hence Lemma \ref{lem:packetbounds} implies that
\begin{equation}\label{eq:decaydependent}
\begin{aligned}
&\lb y\rb^{-2M}|\theta_{\nu,\sigma}(D)f(y)|\\
&\lesssim\int_{\Rn} \lb y-z\rb^{2M}|\F^{-1}\theta_{\nu,\sigma}(y-z)|\ud z\sup_{z\in\Rn} \lb z\rb^{-2M}|\wt{\Psi}(\sigma D)f(z)|\\
 &\lesssim \sup_{z\in\Rn} \lb z\rb^{-2M}|\wt{\Psi}(\sigma D)f(z)|\lesssim 1,
\end{aligned}
\end{equation}
where the implicit constants depend on $f$ and $\sigma$, but not on $(y,\nu)$.

Next, for $0<\veps<1$, denote 
\[
\sup_{(y,\nu)\in\Sp}\frac{\big|\theta_{\nu,{\sigma}}(D)f(y)\big|}{(1+\sigma^{-1}d((x,\omega)(y,\nu))^{2})^{N}(1+\veps\sigma^{-1} d((x,\omega),(y,\nu))^{2})^{M}}
\]
by $M_{N,\sigma,\veps}^{*}(W_{\sigma}f)(x,\omega)$.
Then, by \eqref{eq:equivmetric} and \eqref{eq:decaydependent},
\begin{equation}\label{eq:Mepsfinite}
\begin{aligned}
&M_{N,\sigma,\veps}^{*}(W_{\sigma}f)(x,\omega)\\
&\leq\sup_{(y,\nu)\in\Sp}(1+\veps\sigma^{-1} d((x,\omega),(y,\nu))^{2})^{-M}|\theta_{\nu,\sigma}(D)f(y)|\\
&\lesssim \sup_{(y,\nu)\in\Sp}\lb x-y\rb^{-2M}|\theta_{\nu,\sigma}(D)f(y)|\\
&\lesssim\lb x\rb^{2M}\sup_{(y,\nu)\in\Sp}\lb y\rb^{-2M}|\theta_{\nu,{\sigma}}(D)f(y)|<\infty,
\end{aligned}
\end{equation}
for implicit constants dependent on $f$, $\sigma$ and $\veps$. 

We can now combine \eqref{eq:reprotheta} and Lemma \ref{lem:packetbounds} in the same way as before to write
\begin{align*}
&|\theta_{\nu,\sigma}(D)f(y)|\lesssim\sigma^{-n}
\int_{\Sp}(1+\sigma^{-1}d((y,\nu),(z,\mu))^{2})^{-N-M}|\theta_{\mu,{\sigma}}(D)f(z)|\ud z\ud\mu\\
&\leq \sigma^{-n}
\int_{\Sp}\frac{|\theta_{\mu,\sigma}(D)f(z)|}{(1+\sigma^{-1}d((y,\nu),(z,\mu))^{2})^{N}(1+\veps\sigma^{-1}d((y,\nu),(z,\mu))^{2})^{M}}\ud z\ud\mu,
\end{align*}
for an implicit constant independent of $(y,\nu)$ and $\veps$. The triangle inequality then yields
\begin{align*}
&(1+\sigma^{-1}d((x,\omega),(y,\nu))^{2})^{-N}(1+\veps\sigma^{-1}d((x,\omega),(y,\nu))^{2})^{-M}|\theta_{\nu,\sigma}(D)f(y)|\\
&\lesssim\sigma^{-n}\int_{\Sp}\frac{|\theta_{\mu,\sigma}(D)f(z)|}{(1+\sigma^{-1}d((x,\w),(z,\mu))^{2})^{N}(1+\veps\sigma^{-1}d((x,\w),(z,\mu))^{2})^{M}}\ud z\ud\mu\\
&\leq \big(M_{N,\sigma,\veps}^{*}(W_{\sigma}f)(x,\omega)\big)^{1-\lambda}
\sigma^{-n}\int_{\Sp}\frac{|\theta_{\mu,{\sigma}}(D)f(z)|^{\lambda}}{(1+\sigma^{-1}d((x,\omega),(z,\mu))^{2})^{N\lambda}}
 \ud z\ud\mu.
\end{align*}
This in turn implies that
\begin{align*}
M_{N,\sigma,\veps}^{*}(W_{\sigma}f)(x,\omega)\lesssim \big(M_{N,\sigma,\veps}^{*}(\W_{\sigma}f)(x,\omega)\big)^{1-\lambda}\big(\Ma_{\lambda}(W_{\sigma}f)(x,\omega)\big)^{\lambda},
\end{align*}
and combined with \eqref{eq:Mepsfinite} we now obtain
\[
M_{N,\sigma,\veps}^{*}(W_{\sigma}f)(x,\omega)\lesssim\Ma_{\lambda}(W_{\sigma}f)(x,\omega).
\]
Finally, since the implicit constant is independent of $\veps$, letting $\varepsilon$ tend to zero yields
\begin{align*}
M_{N,\sigma}^{*}(W_{\sigma}f)(x,\omega)\lesssim\Ma_{\lambda}(W_{\sigma}f)(x,\omega).
\end{align*}
This shows that $M_{N,\sigma}^{*}(W_{\sigma}f)(x,\omega)$ is indeed finite if $\Ma_{\lambda}(W_{\sigma}f)(x,\omega)$ is finite. 
\end{proof}


\begin{corollary}\label{cor:maxineq}
Let $\lambda>0$ and $N>n/\la$. Then there exists a $C\geq0$ such that
\[
\sigma^{-n}\int_{\Sp}\frac{|\theta_{\nu,{\sigma}}(D)f(y)|}{(1+{\sigma}^{-1}d((x,\omega),(y,\nu))^{2})^{N}}\ud y\ud\nu
\leq C\Ma_{\lambda}(W_{\sigma}f)(x,\omega)
\]
for all $f\in \Sw'(\Rn)$, $(x,\w)\in\Sp$ and $\sigma\in(0,1)$.
\end{corollary}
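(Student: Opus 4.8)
The plan is to obtain this estimate as a short byproduct of the argument already carried out in the proof of Proposition~\ref{prop:maxineq}. Abbreviate $a(y,\nu):=|\theta_{\nu,\sigma}(D)f(y)|=|W_{\sigma}f(y,\nu)|$, so that the left-hand side of the claimed inequality is $I(x,\w):=\sigma^{-n}\int_{\Sp}a(y,\nu)(1+\sigma^{-1}d((x,\w),(y,\nu))^{2})^{-N}\ud y\ud\nu$; we may assume $0<\lambda\le1$ (the case $\lambda>1$ being similar, and in the relevant range reducible to the former via the monotonicity $\Ma_{\lambda_{0}}(W_{\sigma}f)\le\Ma_{\lambda}(W_{\sigma}f)$ for $\lambda_{0}\le\lambda$). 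First I would split off a power of $a$: writing $a=a^{1-\lambda}a^{\lambda}$ and $(1+\sigma^{-1}d((x,\w),(y,\nu))^{2})^{-N}$ as the product of $(1+\sigma^{-1}d((x,\w),(y,\nu))^{2})^{-N(1-\lambda)}$ and $(1+\sigma^{-1}d((x,\w),(y,\nu))^{2})^{-N\lambda}$, the factor $\big(a(y,\nu)(1+\sigma^{-1}d((x,\w),(y,\nu))^{2})^{-N}\big)^{1-\lambda}$ is bounded, uniformly over $(y,\nu)\in\Sp$, by $\big(M_{N,\sigma}^{*}(W_{\sigma}f)(x,\w)\big)^{1-\lambda}$, directly from the definition of $M_{N,\sigma}^{*}$.

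For the remaining factor, the hypothesis $N>n/\lambda$ gives $N\lambda>n$, so \eqref{eq:HLcontrol} (with $M=N\lambda$) applies and yields, via \eqref{eq:maxHL},
\[
\sigma^{-n}\int_{\Sp}\frac{a(y,\nu)^{\lambda}}{(1+\sigma^{-1}d((x,\w),(y,\nu))^{2})^{N\lambda}}\ud y\ud\nu\le C_{N\lambda}\,\Ma(a^{\lambda})(x,\w)=C_{N\lambda}\big(\Ma_{\lambda}(W_{\sigma}f)(x,\w)\big)^{\lambda}.
\]
Combining this with the previous bound, one obtains
\[
I(x,\w)\lesssim\big(M_{N,\sigma}^{*}(W_{\sigma}f)(x,\w)\big)^{1-\lambda}\big(\Ma_{\lambda}(W_{\sigma}f)(x,\w)\big)^{\lambda}.
\]

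It then remains to invoke Proposition~\ref{prop:maxineq}, which (since $N>n/\lambda$) gives $M_{N,\sigma}^{*}(W_{\sigma}f)(x,\w)\le C\,\Ma_{\lambda}(W_{\sigma}f)(x,\w)$; inserting this into the last display produces $I(x,\w)\lesssim\Ma_{\lambda}(W_{\sigma}f)(x,\w)$, with a constant independent of $f\in\Sw'(\Rn)$, $(x,\w)\in\Sp$ and $\sigma\in(0,1)$. I do not expect any serious obstacle here: after recombining $a^{1-\lambda}a^{\lambda}=a$, the displayed bound for $I(x,\w)$ is precisely one of the intermediate estimates appearing in the proof of Proposition~\ref{prop:maxineq}, so the tidiest write-up would simply refer back to that proof. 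The one point worth flagging is the use of \eqref{eq:HLcontrol}, which requires a weight exponent strictly larger than $n$; this is exactly why one peels off the power $\lambda$ first (to exploit $N\lambda>n$) instead of applying \eqref{eq:HLcontrol} to $a$ directly, and it is the reason the hypothesis reads $N>n/\lambda$ rather than $N>n$.
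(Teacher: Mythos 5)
Your proof is essentially the paper's proof: the same pointwise factorization $a=a^{1-\lambda}a^{\lambda}$ with the weight split into exponents $N(1-\lambda)$ and $N\lambda$, the same appeal to \eqref{eq:HLcontrol} with $M=N\lambda>n$, and the same final absorption via Proposition~\ref{prop:maxineq}. One small remark: both arguments implicitly require $\lambda\leq 1$ at the step bounding $\big(a(y,\nu)(1+\sigma^{-1}d)^{-N}\big)^{1-\lambda}$ by $\big(M_{N,\sigma}^{*}(W_{\sigma}f)(x,\w)\big)^{1-\lambda}$ (for $\lambda>1$ the sign of the exponent flips the inequality), and your proposed reduction of $\lambda>1$ via monotonicity of $\Ma_{\lambda_{0}}\leq\Ma_{\lambda}$ needs a $\lambda_{0}\leq 1$ with $N>n/\lambda_{0}$, hence $N>n$, which the hypothesis $N>n/\lambda$ does not guarantee; since the paper only invokes the corollary with $\lambda<1$, this caveat does not affect the applications.
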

\begin{proof}
By Proposition \ref{prop:maxineq} and \eqref{eq:HLcontrol}, 
\begin{align*}
&\sigma^{-n}\int_{\Sp}\frac{|\theta_{\nu,{\sigma}}(D)f(y)|}{(1+\sigma^{-1}d((x,\omega),(y,\nu))^{2})^{N}}\ud y\ud\nu\\
&\lesssim \big(M_{N,\sigma}^{*}(W_{\sigma}f)(x,\omega)\big)^{1-\lambda}\sigma^{-n}\int_{\Sp}\frac{|\theta_{\nu,{\sigma}}(D)f(y)|^{\lambda}}{(1+\sigma^{-1}d((x,\omega),(y,\nu))^{2})^{N\lambda}}\ud y\ud\nu\\
&\leq \big(M_{N,\sigma}^{*}(W_{\sigma}f)(x,\omega)\big)^{1-\lambda}\big(\Ma_{\lambda}(W_{\sigma}f)(x,\omega)\big)^{\lambda}\leq \Ma_{\lambda}(W_{\sigma}f)(x,\omega).\qedhere
\end{align*}
\end{proof}


We can now prove the main result of this subsection.

\begin{theorem}\label{thm:equivchar}
Let $0<p<\infty$ and $s\in\R$. Then there exists a constant $C>0$ such that the following holds. An $f\in\Sw'(\Rn)$ satisfies $f\in\Hps$ if and only if $q(D)f\in L^{p}(\Rn)$, $\ph_{\w}(D)f\in \HT^{s,p}(\Rn)$ for almost all $\w\in S^{n-1}$, and $(\int_{S^{n-1}}\|\ph_{\w}(D)f\|_{\HT^{s,p}(\Rn)}^{p}\ud \w)^{1/p}<\infty$, in which case
\[
\frac{1}{C}\|f\|_{\Hps}\!\leq\!\|q(D)f\|_{L^{p}(\Rn)}+\Big(\int_{S^{n-1}}\!\|\ph_{\w}(D)f\|_{\HT^{s,p}(\Rn)}^{p}\ud\w\!\Big)^{1/p}\!\leq\!C\|f\|_{\Hps}.
\]
\end{theorem}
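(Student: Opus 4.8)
The plan is to reduce the statement to the case $s=0$ and then, for $0<p<1$ (the case $p\geq 1$ being \cite{FaLiRoSo23,Rozendaal21}), to recast both sides of the claimed equivalence as square functions over the cosphere bundle and compare them. For the reduction: applying $\lb D\rb^{-s}$, using that $\lb D\rb^{s}$ commutes with $q(D)$ and with each $\ph_{\w}(D)$, that $\HT^{s,p}(\Rn)=\lb D\rb^{-s}\HT^{p}(\Rn)$, and that $\xi\mapsto\lb\xi\rb^{\pm s}q_{1}(\xi)$ for $q_{1}\in\Cci$ equal to $1$ on $\supp(q)$ defines a bounded Fourier multiplier on the space of band-limited functions in $L^{p}(\Rn)$ (by \cite[Proposition 1.5.3]{Triebel10}), one checks that the assertion for $f\in\Hps$ is equivalent to the assertion for $g:=\lb D\rb^{s}f\in\Hp$. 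So we may assume $s=0$ and $0<p<1$.

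By Corollary \ref{cor:equivalentnorm} and Remark \ref{rem:equivnorm}, $\|f\|_{\Hp}\eqsim\|q(D)f\|_{L^{p}(\Rn)}+\|S_{h}f\|_{L^{p}(\Sp)}$, where $S_{h}f$ is the \emph{conical} square function \eqref{eq:defS} built from the wave packets $\theta_{\w,\sigma}$ of \eqref{eq:theta}. On the other hand, $\ph_{\w}(D)f$ has Fourier support in $\{|\xi|\geq\tfrac18,\ |\hat\xi-\w|\leq 2|\xi|^{-1/2}\}$, one has $\Psi(\sigma D)\ph_{\w}(D)=\theta_{\w,\sigma}(D)$, and $\theta_{\w,\sigma}\equiv 0$ once $\sigma$ exceeds a fixed $C_{0}$. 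Using the Littlewood--Paley definition \eqref{eq:Hp} of $H^{p}(\Rn)$, the fact that $\HT^{p}(\Rn)$ and $H^{p}(\Rn)$ carry equivalent quasi-norms on distributions whose Fourier support is bounded away from the origin, and that $q\equiv1$ on $\{|\xi|\leq 2\}$, one then obtains
\[
\int_{S^{n-1}}\|\ph_{\w}(D)f\|_{\HT^{p}(\Rn)}^{p}\,\ud\w\ \eqsim\ \int_{\Sp}\Big(\int_{0}^{C_{0}}|\theta_{\w,\sigma}(D)f(x)|^{2}\,\tfrac{\ud\sigma}{\sigma}\Big)^{p/2}\ud x\,\ud\w,
\]
up to additive terms bounded by a constant times $\|q(D)f\|_{L^{p}(\Rn)}^{p}+\|S_{h}f\|_{L^{p}(\Sp)}^{p}$, which collect the bounded-frequency part (in particular the range $\sigma>1$ on the right-hand side) and are handled by band-limited $L^{p}$-multiplier estimates. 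Hence the theorem reduces to a two-sided comparison, for $F(x,\w,\sigma):=\ind_{(0,1)}(\sigma)\theta_{\w,\sigma}(D)f(x)$ on $\Spp$, of its \emph{vertical} square function $\int_{\Sp}(\int_{0}^{1}|F(x,\w,\sigma)|^{2}\tfrac{\ud\sigma}{\sigma})^{p/2}\ud x\,\ud\w$ with its tent-space quasi-norm $\|F\|_{T^{p}(\Sp)}^{p}=\|S_{h}f\|_{L^{p}(\Sp)}^{p}$.

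One inequality is immediate: the vertical square function is dominated by $\|F\|_{T^{p}(\Sp)}$ by \eqref{eq:vertical}, which holds for $0<p\leq 2$; combined with the previous paragraph this gives $\|q(D)f\|_{L^{p}(\Rn)}+(\int_{S^{n-1}}\|\ph_{\w}(D)f\|_{\HT^{p}(\Rn)}^{p}\ud\w)^{1/p}\lesssim\|f\|_{\Hp}$. The reverse domination fails for a general $F$, but holds here because $\theta_{\w,\sigma}(D)f$ is localized, in frequency, to $\{|\hat\xi-\w|\lesssim\sqrt\sigma\}$: for $(y,\nu)\in B_{\sqrt\sigma}(x,\w)$ one has $|\theta_{\nu,\sigma}(D)f(y)|\lesssim M_{N,\sigma}^{*}(W_{\sigma}f)(x,\w)\lesssim\Ma_{\la}(W_{\sigma}f)(x,\w)$ by Proposition \ref{prop:maxineq}, on choosing $0<\la<p$ and $N>n/\la$. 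Consequently $\fint_{B_{\sqrt\sigma}(x,\w)}|\theta_{\nu,\sigma}(D)f(y)|^{2}\,\ud y\,\ud\nu\lesssim\big(\Ma(|W_{\sigma}f|^{\la})(x,\w)\big)^{2/\la}$; integrating in $\sigma$, raising to the power $p/2$, integrating over $\Sp$, and applying the vector-valued Hardy--Littlewood maximal inequality on $L^{p/\la}(\Sp;L^{2/\la}((0,1),\tfrac{\ud\sigma}{\sigma}))$ — bounded since $\Sp$ is a doubling metric measure space and $p/\la,2/\la>1$ — yields $\|S_{h}f\|_{L^{p}(\Sp)}^{p}\lesssim\int_{\Sp}(\int_{0}^{1}|\theta_{\w,\sigma}(D)f(x)|^{2}\tfrac{\ud\sigma}{\sigma})^{p/2}\ud x\,\ud\w$. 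Together with the second paragraph this gives $\|f\|_{\Hp}\lesssim\|q(D)f\|_{L^{p}(\Rn)}+(\int_{S^{n-1}}\|\ph_{\w}(D)f\|_{\HT^{p}(\Rn)}^{p}\ud\w)^{1/p}$, and the "if and only if" is then read off from this chain of estimates, using that Corollary \ref{cor:equivalentnorm} and Remark \ref{rem:equivnorm} are membership criteria.

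I expect the main obstacle to be the reverse domination in the range $p<1$. For $p\geq 1$ it can be obtained by duality and interpolation as in \cite{Rozendaal21}, whereas for $p<1$ one must genuinely exploit the angular concentration of the $\theta_{\w,\sigma}$ through Proposition \ref{prop:maxineq} and then push the Fefferman--Stein vector-valued maximal inequality below the exponent $1$ via the $\la$-power trick. A secondary technical point is the frequency bookkeeping in the second paragraph — isolating the band-limited contributions in the interplay of $\Psi$, $\ph_{\w}$ and the cutoff $q$ — which is routine but must be carried out carefully for $p<1$, where Young's inequality is no longer available.
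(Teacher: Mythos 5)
Your route matches the paper's: the hard inequality is obtained from Proposition~\ref{prop:maxineq} together with the $\lambda$-power trick and the vector-valued Hardy--Littlewood maximal inequality on $L^{p/\la}(\Sp;L^{2/\la})$, while the easy inequality rests on \eqref{eq:vertical} and Remark~\ref{rem:equivnorm}; the paper too runs this argument uniformly for all $0<p\leq 2$. The only real difference in bookkeeping is the treatment of the bounded-frequency part: the paper first splits $f=q(D)f+(1-q(D))f$ and bounds $\|\ph_\w(D)q(D)f\|_{\HT^p(\Rn)}$ by combining the Sobolev embedding (Theorem~\ref{thm:Sobolev}) with the FIO invariance (Theorem~\ref{thm:FIObdd}), exploiting that $\ph_\w q$ is uniformly Schwartz, whereas you appeal directly to band-limited $L^p$-multiplier estimates; both are valid.

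There is one slip that, as written, makes your hard direction circular. You include $\|S_h f\|_{L^p(\Sp)}^p$ among the additive error terms of the two-sided comparison between $\int_{S^{n-1}}\|\ph_\w(D)f\|_{\HT^p(\Rn)}^p\ud\w$ and the vertical square function, and then invoke this very comparison to conclude $\|S_h f\|_{L^p(\Sp)}\lesssim$ RHS. Unless the constant in front of the $\|S_h f\|_{L^p(\Sp)}^p$ error term can be taken strictly less than $1$, nothing can be read off. In fact that term should not appear: once one uses $\|\ph_\w(D)f\|_{\HT^p(\Rn)}\eqsim\|\ph_\w(D)f\|_{H^p(\Rn)}$ (legitimate because $\supp\ph_\w$ is bounded away from the origin, uniformly in $\w$), the identity $\Psi(\sigma D)\ph_\w(D)=\theta_{\w,\sigma}(D)$ gives an exact equivalence with the vertical square function over $\sigma\in(0,C_0]$, and the contribution from $\sigma\in[1,C_0]$ is controlled by $\|q(D)f\|_{L^p(\Rn)}^p$ alone. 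Moreover, for the hard direction the paper sidesteps the two-sided comparison altogether: $\int_0^1|\theta_{\w,\sigma}(D)f|^2\frac{\ud\sigma}{\sigma}$ is pointwise dominated by the full $H^p$ square function of $\ph_\w(D)f$, so only a one-sided monotone inequality is needed there. With this correction your argument is sound.
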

\begin{proof}
The statement is contained in \cite[Theorem 1.1]{Rozendaal21} for $1<p<\infty$, and in \cite[Proposition 3.1]{FaLiRoSo23} for $p=1$. Hence it suffices to consider $0<p<1$. However, we will prove the required statement in a unified manner for all $0<p\leq 2$, using an argument similar to the one from \cite{FaLiRoSo23} for $p=1$. We may suppose that $s=0$.

First suppose that $f\in\Hp$. By Theorem \ref{thm:Sobolev} or Corollary \ref{cor:equivalentnorm}, $q(D)f\in L^{p}(\Rn)$ and $\|q(D)f\|_{L^{p}(\Rn)}\lesssim \|f\|_{\Hp}$. Moreover, recall that $\ph_{\w}(\xi)=0$ for $|\xi|\leq 1/2$, and that the $\HT^{p}(\Rn)$ quasi-norm is independent of the choice of low-frequency cutoff. Also note that $\ph_{\w}{q}\in\Sw(\Rn)$ for each $\w\in S^{n-1}$, with each of the Schwartz seminorms bounded uniformly in $\w$. Hence Theorem \ref{thm:Sobolev} and \ref{thm:FIObdd} yield
\begin{align*}
\|\ph_{\w}(D)q(D)f\|_{\HT^{p}(\Rn)}&=\|\lb D\rb^{s(p)}\ph_{\w}(D)q(D)f\|_{\HT^{-s(p),p}(\Rn)}\\
&\lesssim \|\lb D\rb^{s(p)}\ph_{\w}(D)q(D)f\|_{\Hp}\lesssim \|f\|_{\Hp},
\end{align*}
for an implicit constant independent of $\w\in S^{n-1}$. This in turn implies that $(\int_{S^{n-1}}\|\ph_{\w}(D)q(D)f\|_{\HT^{p}(\Rn)}^{p}\ud\w)^{1/p}\lesssim \|f\|_{\Hp}$. 

Finally, set $g:=(1-q(D))f$. Then \eqref{eq:Hp} and the properties of $q$ imply that
\begin{align*}
\|\ph_{\w}(D)(1-q(D))f\|_{\HT^{p}(\Rn)}&\eqsim\|\ph_{\w}(D)(1-q(D))f\|_{H^{p}(\Rn)}\\
&=\Big(\int_{\Rn}\Big(\int_{0}^{1}|\Psi(\sigma D)\ph_{\w}(D)g(x)|^{2}\frac{\ud\sigma}{\sigma}\Big)^{p/2}\ud x\Big)^{1/p}\\
&=\Big(\int_{\Rn}\Big(\int_{0}^{1}|\theta_{\w,\sigma}(D)g(x)|^{2}\frac{\ud\sigma}{\sigma}\Big)^{p/2}\ud x\Big)^{1/p}
\end{align*}
for each $\w\in S^{n-1}$. Hence \eqref{eq:vertical}, Remark \ref{rem:equivnorm} and Theorem \ref{thm:FIObdd} yield 
\begin{align*}
&\Big(\int_{S^{n-1}}\|\ph_{\w}(D)(1-q(D))f\|_{H^{p}(\Rn)}^{p}\ud\w\Big)^{1/p}\\
&=\Big(\int_{\Sp}\Big(\int_{0}^{1}|\theta_{\w,\sigma}(D)g(x)|^{2}\frac{\ud\sigma}{\sigma}\Big)^{p/2}\ud x\Big)^{1/p}\ud x\ud\w\Big)^{1/p}\\
&\lesssim \Big(\int_{\Sp}\Big(\int_{0}^{1}\fint_{B_{\sqrt{\sigma}}(x,\w)}|\theta_{\nu,\sigma}(D)g(y)|^{2}\ud y\ud\nu\frac{\ud\sigma}{\sigma}\Big)^{p/2}\ud x\ud\w\Big)^{1/p}\\
&\lesssim \|g\|_{\Hp}\lesssim \|f\|_{\Hp},
\end{align*}
which proves one of the required implications.

For the converse implication, suppose that $q(D)f\in L^{p}(\Rn)$, that $\ph_{\w}(D)f\in \HT^{p}(\Rn)$ for almost all $\w\in S^{n-1}$, and that $(\int_{S^{n-1}}\|\ph_{\w}(D)f\|_{\HT^{p}(\Rn)}^{p}\ud \w)^{1/p}<\infty$. Using notation as in \eqref{eq:defS}, by Remark \ref{rem:equivnorm} it suffices to show that $\|S_{h}f\|_{L^{p}(\Sp)}\lesssim (\int_{S^{n-1}}\|\ph_{\w}(D)f\|_{\HT^{p}(\Rn)}^{p}\ud \w)^{1/p}$. This part of the argument in fact works for all $p\in(0,\infty)$.

Let $N>n/p$ and $\la\in(0,\min(2,p))$ be such that $\lambda N>n$. By Proposition \ref{prop:maxineq},
\begin{align*}
S_{h}f(x,\omega)^{2}&\leq\int_{0}^{1}\sup_{(y,\nu)\in B_{\sqrt{\sigma}}(x,\omega)} |\theta_{\nu,{\sigma}}(D)f(y)|^{2}\frac{\ud\sigma}{\sigma}\lesssim \int_{0}^{1}\big(M_{N,\sigma}^{*}(W_{\sigma}f)(x,\omega)\big)^{2}\frac{\ud\sigma}{\sigma}\\
&\lesssim\int_{0}^{1}\big(\mathcal{M}_{\lambda}(W_{\sigma}f)(x,\omega)\big)^{2}\frac{\ud\sigma}{\sigma}
\end{align*}
for all $(x,\w)\in\Sp$. Hence we can use that the Hardy--Littlewood maximal function $\Ma$ is bounded on $L^{p/\la}(\Sp;L^{2/\la}(0,\infty))$:
\begin{align*}
\|S_{h}f\|_{L^{p}(\Sp)}&\lesssim\Big(\int_{\Sp}\Big(\int_{0}^{1}\big(\Ma(|W_{\sigma}f|^{\la})(x,\omega)\big)^{2/\la}\frac{\ud\sigma}{\sigma}\Big)^{p/2}\ud x\ud\w\Big)^{1/p}\\
&\lesssim \Big(\int_{\Sp}\Big(\int_{0}^{1}|\theta_{\w,\sigma}(D)f(x)|^{2}\frac{\ud\sigma}{\sigma}\Big)^{p/2}\ud x\ud\w\Big)^{1/p}\\
&\leq\Big(\int_{S^{n-1}}\|\ph_{\w}(D)f\|_{H^{p}(\Rn)}^{p}\ud\w\Big)^{1/p}\\
&\eqsim \Big(\int_{S^{n-1}}\|\ph_{\w}(D)f\|_{\HT^{p}(\Rn)}^{p}\ud\w\Big)^{1/p},
\end{align*}
thereby concluding the proof.
\end{proof}

\begin{remark}\label{rem:anisHardy}
Since the definition of $\HT^{p}(\Rn)$ is independent of the choice of low-frequency cutoff, it is straightforward to see that, for all $0<p<\infty$ and $s\in\R$, 
\[
\|f\|_{\Hps}\eqsim \|q(D)f\|_{L^{p}(\Rn)}+\Big(\int_{S^{n-1}}\|\lb D\rb^{s}\ph_{\w}(D)f\|_{H^{p}(\Rn)}^{p}\ud\w\Big)^{1/p}.
\]
Less trivially, as in \cite{HaPoRoYu23}, one can show that 
\[
\|f\|_{\Hp}\eqsim \|q(D)f\|_{L^{p}(\Rn)}+\Big(\int_{S^{n-1}}\|\ph_{\w}(D)f\|_{H^{p}_{\w}(\Rn)}^{p}\ud\w\Big)^{1/p}.
\]
Here $H^{p}_{\w}(\Rn)$ is a parabolic Hardy space in the direction of $\w\in S^{n-1}$ (see \cite{HaPoRoYu23,Calderon-Torchinsky75,Calderon-Torchinsky77}). This follows by proving, as in \cite{HaPoRoYu23}, that $\|\ph_{\w}(D)f\|_{H^{p}(\Rn)}\eqsim \|\ph_{\w}(D)f\|_{H^{p}_{\w}(\Rn)}$, due to the support properties of $\ph_{\w}$.
\end{remark}

\section*{Acknowledgments}\label{sec:acknowledge}

 The authors 
would like to thank Prof.~Lixin Yan for helpful  discussions and valuable support.

\appendix

\section{Complex interpolation}\label{sec:inter}

Whereas the theory of real interpolation extends directly from Banach spaces to quasi-Banach spaces, the same does not hold for complex interpolation. The main problem here is that the maximum modulus principle can break down on quasi-Banach spaces. As a consequence, point evaluation does not necessarily constitute a continuous functional on the spaces of analytic functions typically used to define interpolation spaces. Various solutions to this problem have been proposed in the literature; we will rely on ideas from \cite{Kalton-Mitrea98}. 

Throughout, all quasi-norms under consideration are assumed to be continuous. {We note that, while a quasi-norm need not be continuous, every quasi-Banach space can be equipped with an equivalent quasi-norm which is a so-called \emph{$p$-norm}, and as such it is continuous (see \cite{Rolewicz84}).}

For $X$ a quasi-Banach space and $\Omega\subseteq \C$ an open subset, an $f:\Omega\to X$ is \emph{analytic} if, for every $z_{0}\in\Omega$, there exist $c>0$ and $(x_{k})_{k=0}^{\infty}\subseteq X$ such that
\[
f(z)=\sum_{k=0}^{\infty}(z-z_{0})^{k}x_{k}
\]
for all $z\in\Omega$ with $|z-z_{0}|<c$, where the series converges uniformly. This notion of analyticity is more useful than that of complex differentiability (see \cite{Kalton86a}).

Now let $X_{0}$ and $X_{1}$ be quasi-Banach spaces, continuously embedded into a Hausdorff topological vector space $Z$. We call $(X_{0},X_{1})$ an \emph{interpolation couple}. Set $\St:=\{z\in\C\mid 0<\Real(z)<1\}$, and let $\F_{X_{0},X_{1}}$ consist of all analytic $f:\St\to X_{0}+X_{1}$ that extend in a continuous and bounded manner to $\overline{\St}$, and that are continuous and bounded from $\{z\in \C\mid \Real(z)=j\}$ to $X_{j}$, for $j\in\{0,1\}$. Let
\begin{equation}\label{eq:analytnorm}
\|f\|_{\F_{X_{0},X_{1}}}:=\sup_{t\in\R}\|f(it)\|_{X_{0}}+\sup_{t\in\R}\|f(1+it)\|_{X_{1}}+\sup_{z\in\St}\|f(z)\|_{X_{0}+X_{1}}
\end{equation}
be the natural quasi-norm on $\F_{X_{0},X_{1}}$. 

Note that the last term in \eqref{eq:analytnorm} is typically not included in the theory of complex interpolation on Banach spaces; due to the maximum modulus principle, it is superfluous. However, on quasi-Banach spaces, this term ensures that $\F_{X_{0},X_{1}}$ is complete, i.e.~that $\F_{X_{0},X_{1}}$ is itself a quasi-Banach space, and that each point evaluation $e_{z}:\F_{X_{0},X_{1}}\to\C$, at $z\in\St$, is continuous. As such, for $\theta\in(0,1)$, we can set $[X_{0},X_{1}]_{\theta}:=\{f(\theta)\mid f\in \F_{X_{0},X_{1}}\}$, endowed with the quasi-norm
\[
\|x\|_{[X_{0},X_{1}]_{\theta}}:=\inf\{\|f\|_{\F_{X_{0},X_{1}}}\mid f\in \F_{X_{0},X_{1}},f(\theta)=x\}.
\]
Then (see also \cite[Lemma 2.1]{Kalton-Mitrea98})
\[
[X_{0},X_{1}]_{\theta}=\F_{X_{0},X_{1}}/\ker(e_{\theta})
\]
is a quasi-Banach space as well.

Now, although the addition of the last term in \eqref{eq:analytnorm} provides an easy way to produce well-defined interpolation spaces, for a more complete interpolation theory it is natural to work with the class of analytically convex quasi-Banach spaces, introduced in \cite{Kalton86b} and then applied to interpolation theory in \cite{Kalton-Mitrea98}.

For our purposes (see also \cite[Theorem 7.4]{KaMaMi07}), a quasi-Banach space $X$ is \emph{analytically convex} if there exists a $C\geq0$ such that 
\[
\sup\{\|f(z)\|_{X} \mid  z\in\St\}\leq C\sup\{\|f(z)\|_{X}\mid  \Real(z)\in \{0, 1\}\}
\]
for each analytic $f:\St\to X$ that extends continuously and boundedly to $\overline{\St}$. Every Banach space is analytically convex, with $C=1$, as are the quasi-Banach spaces that interest us in this article. 

For example, the notion of analytic convexity plays a role in the following lemma, an extension of Stein's interpolation result to quasi-Banach spaces that we record for independent interest.

\begin{lemma}\label{lem:Steininter}
Let $(X_{0},X_{1})$ and $(Y_{0},Y_{1})$ be interpolation couples, and let 
$(T_{z})_{z\in\overline{\St}}\subseteq\La(X_{0}+X_{1},Y_{0}+Y_{1})$ be such that the following conditions hold.
\begin{enumerate}
\item\label{it:Steininter1} $K:=\sup_{z\in\overline{\St}}\|T_{z}\|_{\La(X_{0}+X_{1},Y_{0}+Y_{1})}<\infty$;
\item\label{it:Steininter2} For all $g\in X_{0}+X_{1}$, the $Y_{0}+Y_{1}$-valued map $z\mapsto T_{z}g$ is continuous on $\overline{\St}$, and analytic on $\St$;
\item\label{it:Steininter3} For each $j\in\{0,1\}$, one has $T_{j+it}\in\La(X_{j},Y_{j})$ for all $t\in\R$, and $K_{j}:=\sup_{t\in\R}\|T_{j+it}\|_{\La(X_{j},Y_{j})}<\infty$.
\item\label{it:Steininter4} For each $j\in\{0,1\}$ and all $g\in X_{j}$, the $Y_{j}$-valued map $t\mapsto T_{j+it}g$ is continuous on $\R$.
\end{enumerate}
Then $T_{\theta}\in \La([X_{0},X_{1}]_{\theta},[Y_{0},Y_{1}]_{\theta})$ for all $\theta\in(0,1)$, and 
\begin{equation}\label{eq:Steininter1}
\|T_{\theta}\|_{\La([X_{0},X_{1}]_{\theta},[Y_{0},Y_{1}]_{\theta})}\\
\leq \max(K_{0},K_{1},K).
\end{equation}
Moreover, if $Y_{0}+Y_{1}$ is analytically convex, then there exists a $C\geq0$ such that
\begin{equation}\label{eq:Steininter2}
\|T_{\theta}\|_{\La([X_{0},X_{1}]_{\theta},[Y_{0},Y_{1}]_{\theta})}\leq C\max(K_{0},K_{1})
\end{equation}
for all $\theta\in(0,1)$.
\end{lemma}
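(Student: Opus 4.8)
The plan is to mimic the classical proof of Stein interpolation, but packaged through the machinery of $\F_{X_{0},X_{1}}$ and $\F_{Y_{0},Y_{1}}$ developed just above, taking care that the maximum modulus principle is only invoked in the complex plane (for scalar-valued functions) and, for the improved bound \eqref{eq:Steininter2}, through the analytic convexity hypothesis rather than through any vector-valued maximum principle. First I would fix $\theta\in(0,1)$ and $x\in[X_{0},X_{1}]_{\theta}$, and take an arbitrary $f\in\F_{X_{0},X_{1}}$ with $f(\theta)=x$. The natural candidate for an element of $\F_{Y_{0},Y_{1}}$ representing $T_{\theta}x$ is $g(z):=T_{z}f(z)$. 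One must check that $g\in\F_{Y_{0},Y_{1}}$: continuity and boundedness of $g:\overline{\St}\to Y_{0}+Y_{1}$ follow from \eqref{it:Steininter1} together with the corresponding properties of $f$; analyticity of $g$ on $\St$ follows because $z\mapsto T_{z}h$ is analytic for fixed $h$ (by \eqref{it:Steininter2}), $f$ is analytic, and analyticity in the sense used here (uniformly convergent power series, cf.\ \cite{Kalton86a}) is preserved under this kind of bilinear substitution; and boundedness and continuity of $g$ from the line $\Real(z)=j$ into $Y_{j}$ come from \eqref{it:Steininter3}, \eqref{it:Steininter4} and the corresponding properties of $f$ on that line. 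The one point that needs a small argument is the joint continuity/analyticity of $(z,h)\mapsto T_{z}h$, which one gets from the uniform bound \eqref{it:Steininter1} plus separate continuity/analyticity by a standard equicontinuity argument; I would write this out carefully since it is the place where the hypotheses \eqref{it:Steininter1}--\eqref{it:Steininter4} are genuinely used together.

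Granting $g\in\F_{Y_{0},Y_{1}}$, we have $g(\theta)=T_{\theta}f(\theta)=T_{\theta}x$, so $T_{\theta}x\in[Y_{0},Y_{1}]_{\theta}$ and, by definition of the interpolation quasi-norm,
\[
\|T_{\theta}x\|_{[Y_{0},Y_{1}]_{\theta}}\leq\|g\|_{\F_{Y_{0},Y_{1}}}
=\sup_{t\in\R}\|T_{it}f(it)\|_{Y_{0}}+\sup_{t\in\R}\|T_{1+it}f(1+it)\|_{Y_{1}}+\sup_{z\in\St}\|T_{z}f(z)\|_{Y_{0}+Y_{1}}.
\]
The first two terms are bounded by $K_{0}\sup_{t}\|f(it)\|_{X_{0}}$ and $K_{1}\sup_{t}\|f(1+it)\|_{X_{1}}$ respectively, and the third by $K\sup_{z\in\St}\|f(z)\|_{X_{0}+X_{1}}$. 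Hence $\|g\|_{\F_{Y_{0},Y_{1}}}\leq\max(K_{0},K_{1},K)\|f\|_{\F_{X_{0},X_{1}}}$. Taking the infimum over all admissible $f$ gives $\|T_{\theta}x\|_{[Y_{0},Y_{1}]_{\theta}}\leq\max(K_{0},K_{1},K)\|x\|_{[X_{0},X_{1}]_{\theta}}$, which is \eqref{eq:Steininter1}.

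For the sharper estimate \eqref{eq:Steininter2}, the point is to remove the dependence on $K$, i.e.\ on the third term in the quasi-norm of $\F_{Y_{0},Y_{1}}$. Here is where analytic convexity of $Y_{0}+Y_{1}$ enters: applying the defining inequality to the analytic function $z\mapsto g(z)=T_{z}f(z)$, which extends continuously and boundedly to $\overline{\St}$, we get a constant $C\geq0$ (depending only on $Y_{0}+Y_{1}$) with
\[
\sup_{z\in\St}\|T_{z}f(z)\|_{Y_{0}+Y_{1}}
\leq C\sup_{\Real(z)\in\{0,1\}}\|T_{z}f(z)\|_{Y_{0}+Y_{1}}
\leq C\max(K_{0},K_{1})\|f\|_{\F_{X_{0},X_{1}}},
\]
using that $\|\cdot\|_{Y_{j}}$ dominates $\|\cdot\|_{Y_{0}+Y_{1}}$ on $Y_{j}$ and that the boundary terms of $g$ are controlled by $K_{0}$ and $K_{1}$ as above. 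Combining this with the boundary estimates already obtained yields $\|g\|_{\F_{Y_{0},Y_{1}}}\lesssim\max(K_{0},K_{1})\|f\|_{\F_{X_{0},X_{1}}}$, and taking the infimum over $f$ gives \eqref{eq:Steininter2}. I expect the main obstacle to be the rigorous verification that $g=T_{\cdot}f(\cdot)$ lands in $\F_{Y_{0},Y_{1}}$ with the correct boundary behavior — in particular the joint continuity of $(z,h)\mapsto T_{z}h$ and the preservation of the power-series notion of analyticity under the substitution $z\mapsto T_{z}f(z)$; once that is in place, the norm bookkeeping is routine, and the improvement from \eqref{eq:Steininter1} to \eqref{eq:Steininter2} is precisely the single application of analytic convexity.
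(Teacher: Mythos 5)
Your strategy is essentially the same as the paper's: for $f\in\F_{X_0,X_1}$ with $f(\theta)=x$ you set $g(z):=T_z f(z)$, verify $g\in\F_{Y_0,Y_1}$ with $\|g\|_{\F_{Y_0,Y_1}}\leq\max(K_0,K_1,K)\|f\|_{\F_{X_0,X_1}}$, and take the infimum over admissible $f$; for \eqref{eq:Steininter2} you apply the analytic-convexity inequality to $g$ to remove the interior term. The boundary estimates using $K_0,K_1$, the interior bound using $K$, and the role of analytic convexity all match the paper. (The paper routes the final infimum step through \cite[Lemma 2.2]{Kalton-Mitrea98}, which is just the abstract version of what you do directly; that difference is immaterial.)

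The genuine gap is the step you flag but do not close: showing that $z\mapsto T_z f(z)$ is analytic on $\St$ in the uniform-power-series sense used here. Your remark that analyticity ``is preserved under this kind of bilinear substitution'' is exactly the claim that needs proof, and in the quasi-Banach setting it is not automatic. Given local expansions $f(z)=\sum_k(z-z_0)^k x_k$ and $T_z x_k=\sum_l(z-z_0)^l y_{k,l}$ from \eqref{it:Steininter2}, one must justify resumming the double series $\sum_{k,l}(z-z_0)^{k+l}y_{k,l}$ into a single locally uniformly convergent power series, and this requires a quantitative control on $\|y_{k,l}\|_{Y_0+Y_1}$. The paper obtains it from \cite[Proposition 3.1]{Kalton-Mitrea98}, a Cauchy-type coefficient estimate for analytic functions into quasi-Banach spaces, which gives a bound of the form $\|y_{k,l}\|_{Y_0+Y_1}\lesssim \frac{(m+l)!}{l!}\sup_{|z'-z_0|<\veps}\|T_{z'}x_k\|_{Y_0+Y_1}$ for some $m\in\N$ and $\veps>0$ independent of $k,l$; combined with $\sup_{z'}\|T_{z'}x_k\|_{Y_0+Y_1}\leq K\|x_k\|_{X_0+X_1}$ from \eqref{it:Steininter1}, this furnishes a summable majorant and justifies the rearrangement. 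Your proposed route via ``joint continuity/analyticity by equicontinuity'' is vague and does not by itself yield this power-series convergence; it is this Cauchy coefficient bound (or an equivalent) that is the missing ingredient. Once that is supplied, the rest of your argument is correct.
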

\begin{proof}
First note that $[X_{0},X_{1}]_{\theta}\subseteq X_{0}+X_{1}$ for all $\theta\in(0,1)$. Hence, if we show that $[z\mapsto T_{z}f(z)]\in \F_{Y_{0},Y_{1}}$ for each $f\in\F_{X_{0},X_{1}}$, and that
\[
\|[z\mapsto T_{z}f(z)]\|_{\F_{Y_{0},Y_{1}}}\leq \max(K_{0},K_{1},K)\|f\|_{\F_{X_{0},X_{1}}},
\] 
then \cite[Lemma 2.2]{Kalton-Mitrea98} yields \eqref{eq:Steininter1}. Moreover, if we show that 
\[
\|[z\mapsto T_{z}f(z)]\|_{\F_{Y_{0},Y_{1}}}\lesssim \max(K_{0},K_{1})\|f\|_{\F_{X_{0},X_{1}}}
\]
whenever $Y_{0}+Y_{1}$ is analytically convex, then \cite[Lemma 2.2]{Kalton-Mitrea98} also yields \eqref{eq:Steininter2}.

Let $f\in\F_{X_{0},X_{1}}$. Then \eqref{it:Steininter1} implies that $T_{z}f(z)\in Y_{0}+Y_{1}$ for all $z\in\overline{\St}$, and that 
\begin{equation}\label{eq:Steininter3}
\sup_{z\in\overline{\St}}\|T_{z}f(z)\|_{Y_{0}+Y_{1}}\leq K\sup_{z\in\overline{\St}}\|f(z)\|_{X_{0}+X_{1}}.
\end{equation}
By combining \eqref{it:Steininter1} and the first part of \eqref{it:Steininter2}, one also sees that the $Y_{0}+Y_{1}$-valued map $z\mapsto T_{z}f(z)$ is continuous on $\overline{\St}$. And, for each $z_{0}\in\St$, the second part of \eqref{it:Steininter2} implies that there exist $(x_{k})_{k=0}^{\infty}\subseteq X_{0}+X_{1}$ and $(y_{k,l})_{k,l=0}^{\infty}\subseteq Y_{0}+Y_{1}$ such that 
\[
T_{z}f(z)=T_{z}\Big(\sum_{k=0}^{\infty}(z-z_{0})^{k}x_{k}\Big)=\sum_{k=0}^{\infty}(z-z_{0})^{k}\sum_{l=0}^{\infty}(z-z_{0})^{l}y_{k,l}=\sum_{k,l=0}^{\infty}(z-z_{0})^{k+l}y_{k,l}
\]
for all $z$ in a neighborhood of $z_{0}$. {{To justify the final identity and to see that the power series converges uniformly, we can use \cite[Proposition 3.1]{Kalton-Mitrea98} to write 
\begin{align*}
&\sum_{k=0}^{\infty}|z-z_{0}|^{k}\sum_{l=0}^{\infty}\|(z-z_{0})^{l}y_{k,l}\|_{Y_{0}+Y_{1}}\\
&\lesssim \sum_{k=0}^{\infty}\sum_{l=0}^{\infty}\frac{(m+l)!}{l!}|z-z_{0}|^{k+l}\sup_{|z'-z_{0}|<\veps}\|T_{z'}x_{k}\|_{Y_{0}+Y_{1}}\\
&\lesssim \sum_{k=0}^{\infty}\sum_{l=0}^{\infty}\frac{(m+l)!}{l!}|z-z_{0}|^{k+l}\|x_{k}\|_{X_{0}+X_{1}}\lesssim \sum_{k=0}^{\infty}|z-z_{0}|^{k}\|x_{k}\|_{X_{0}+X_{1}}
\end{align*}
for some $m\in\N$ and $\veps>0$ independent of $k$ and $l$, and for all $z\in\St$ with $|z-z_{0}|<\veps$. The final series in turn converges locally uniformly because $f$ is analytic, and we see that $z\mapsto T_{z}f(z)$ is indeed analytic on $\St$.}}

Next, by \eqref{it:Steininter3}, $T_{j+it}f(j+it)\in Y_{j}$ for all $t\in\R$ and $j\in\{0,1\}$, with
\begin{equation}\label{eq:Steininter4}
\begin{aligned}
&\sup_{t\in\R}\|T_{j+it}f(j+it)\|_{Y_{j}}\leq K_{j}\sup_{t\in\R}\|f(j+it)\|_{X_{j}}.
\end{aligned}
\end{equation}
And, combining \eqref{it:Steininter3} and \eqref{it:Steininter4}, it follows that $t\mapsto T_{j+it}f(j+it)$ is a continuous $Y_{j}$-valued map. In turn, \eqref{eq:Steininter3} and \eqref{eq:Steininter4} imply \eqref{eq:Steininter1}. 

Finally, if $Y_{0}+Y_{1}$ is analytically convex, then
\[
\sup_{z\in\overline{\St}}\|T_{z}f(z)\|_{Y_{0}+Y_{1}}=\sup_{z\in\St}\|T_{z}f(z)\|_{Y_{0}+Y_{1}}\lesssim \sup\{\|f(z)\|_{Y_{0}+Y_{1}}\mid \Real(z)\in\{0,1\}\}
\]
for each $f\in\F_{X_{0},X_{1}}$, after which \eqref{eq:Steininter4} concludes the proof. 
\end{proof}

We also use the notion of analytic convexity in the proof of the following result, stated in the main text as Proposition \ref{prop:tentint}.

\begin{proposition}\label{prop:tentintapp}
Let $p_{0},p_{1}\in(0,\infty]$ be such that $(p_{0},p_{1})\neq (\infty,\infty)$, and let $p\in(0,\infty)$, $s_{0},s_{1},s\in\R$ and $\theta\in(0,1)$ be such that $\frac{1}{p}=\frac{1-\theta}{p_{0}}+\frac{\theta}{p_{1}}$ and $s=(1-\theta)s_{0}+\theta s_{1}$. Then
\[
[T^{p_{0}}_{s_{0}}(\Sp),T^{p_{1}}_{s_{1}}(\Sp)]_{\theta}=T^{p}_{s}(\Sp),
\]
with equivalent quasi-norms.
\end{proposition}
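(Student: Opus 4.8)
The plan is to reduce the statement to the known $s_{0}=s_{1}=0$ case by exploiting the isometric rescaling $F\mapsto\big((x,\w,\sigma)\mapsto\sigma^{s_{j}}F(x,\w,\sigma)\big)$ that identifies $T^{p_{j}}(\Sp)$ with $T^{p_{j}}_{s_{j}}(\Sp)$, as recorded in Section \ref{subsec:tent}. Since the factor $\sigma^{s}$ is independent of $j$ when $s_{0}=s_{1}=s$, and in general the weight difference is handled by a single multiplier in the $\sigma$-variable, complex interpolation is unaffected by this change of variables; so it suffices to prove $[T^{p_{0}}(\Sp),T^{p_{1}}(\Sp)]_{\theta}=T^{p}(\Sp)$ with equivalent quasi-norms. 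When all $p_{j}\geq 1$ this is contained in \cite{Amenta18} (and, in the Euclidean case, essentially in \cite{HoMaMc11}), so the new content is the range where at least one $p_{j}<1$.

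The first key step is to verify the functional-analytic hypotheses needed to even speak of complex interpolation in the quasi-Banach setting, following Appendix \ref{sec:inter}: namely that $T^{p_{0}}_{s_{0}}(\Sp)+T^{p_{1}}_{s_{1}}(\Sp)$ is analytically convex. By the rescaling above it is enough to show $T^{p_{0}}(\Sp)+T^{p_{1}}(\Sp)$ is analytically convex. Here I would argue as follows: the conical square function $F\mapsto\A_{0}F$ is sublinear and maps $T^{p_{j}}(\Sp)$ isometrically into $L^{p_{j}}(\Sp)$; the spaces $L^{p_{0}}(\Sp)+L^{p_{1}}(\Sp)$ are analytically convex (this is classical for quasi-Banach lattices with nontrivial concavity, cf.~\cite{Kalton86b,Kalton-Mitrea98}), and $T^{p_{0}}(\Sp)+T^{p_{1}}(\Sp)$ embeds, via a bounded linear-on-each-fiber map together with the sublinear control by $\A_{0}$, into a space built from $L^{p_{0}}+L^{p_{1}}$; one then transfers analytic convexity along this embedding using \cite[Lemma 7.5]{KaMaMi07}, just as is done for $\Hps$ in Remark \ref{rem:HpFIOanconvex}. (In the case $p_{0}<1\le p_{1}$ one can alternatively observe directly that $T^{p_{0}}+T^{p_{1}}$ sits between $T^{p_{0}}$ and a Banach tent space, and use that $T^{p_{0}}$ is a $p_{0}$-normed lattice.)

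The second, and main, step is the two inclusions. For the easy direction, $[T^{p_{0}}(\Sp),T^{p_{1}}(\Sp)]_{\theta}\subseteq T^{p}(\Sp)$, I would run the standard complex-method computation with the conical square function: given $G\in\F_{T^{p_{0}},T^{p_{1}}}$ with $G(\theta)=F$, consider the analytic family of multiplication-type maps and estimate $\|\A_{0}F\|_{L^{p}}$ by three lines using H\"older on $L^{p_{0}}$, $L^{p_{1}}$ combined with the pointwise bound $\A_{0}(G(z))\le(\text{fiberwise bound})$; this is exactly the scalar-lattice argument and works verbatim for $p<1$ since $L^{p}$ is a quasi-Banach lattice and we only need the three-line lemma for the scalar function $z\mapsto\log\|\A_{0}(G(z))\|$. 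For the reverse inclusion $T^{p}(\Sp)\subseteq[T^{p_{0}}(\Sp),T^{p_{1}}(\Sp)]_{\theta}$, the natural route is the atomic decomposition of $T^{p}(\Sp)$ from Proposition \ref{prop:atomictent}: write $F=\sum_{k}\alpha_{k}A_{k}$ with $(\alpha_{k})\in\ell^{p}$ and $A_{k}$ atoms supported in tents $T(B_{k})$, and build an explicit analytic function $G(z):=\sum_{k}|\alpha_{k}|^{p(\frac{1-z}{p_{0}}+\frac{z}{p_{1}})}\,\mathrm{e}^{i\arg\alpha_{k}}\,A_{k}$ (suitably truncated to a finite sum, then passed to the limit), check that $G(\theta)=F$ and that $G(it)$, $G(1+it)$ lie in $T^{p_{0}}$, $T^{p_{1}}$ with controlled norm using the uniform $T^{p_{j}}$-boundedness of atoms (atoms for $T^{p}(\Sp)$ are, up to the normalization exponent, atoms for $T^{p_{j}}(\Sp)$ after scaling by $|B_{k}|$) together with the $\ell^{p_{j}}$-summability that follows from $(\alpha_{k})\in\ell^{p}$. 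The truncation/convergence bookkeeping, and making sure the endpoint atoms are genuinely atoms for the endpoint tent spaces with the correct power of $|B_{k}|$, is where the real care is needed.

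The hard part, I expect, is precisely this reverse inclusion in the presence of a quasi-Banach endpoint: one must ensure the constructed analytic family $G$ genuinely belongs to $\F_{T^{p_{0}},T^{p_{1}}}$ — in particular that $\sup_{z\in\St}\|G(z)\|_{T^{p_{0}}+T^{p_{1}}}<\infty$, the extra term \eqref{eq:analytnorm} that is automatic on Banach spaces but must be checked here — and that the series defining $G$ converges in the right topologies; handling this cleanly is exactly the kind of subtlety the authors flag, and it is essentially the content of the quasi-Banach atomic interpolation arguments in \cite{KaMaMi07,Kalton-Mitrea98}, which I would cite rather than reprove. Once both inclusions and analytic convexity are in hand, the equivalence of quasi-norms (rather than mere set equality) follows from the open mapping theorem for quasi-Banach spaces, since both $[T^{p_{0}}_{s_{0}}(\Sp),T^{p_{1}}_{s_{1}}(\Sp)]_{\theta}$ and $T^{p}_{s}(\Sp)$ are complete and the identity between them is bounded in the direction just proved, with a bounded inverse from the other.
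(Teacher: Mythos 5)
Your route is genuinely different from the paper's — you propose to run a direct complex-method argument (three-lines via the conical square function for one inclusion, an explicit atomic analytic family for the other), while the paper instead invokes the Calder\'on-product characterization of complex interpolation for quasi-Banach function lattices from \cite[Theorem 1]{Egert-Kosmala24}, and then identifies the Calder\'on product $T^{p_0}_{s_0}(\Sp)^{1-\theta}T^{p_1}_{s_1}(\Sp)^{\theta}$ using the tent-space multiplication theorem of Cohn--Verbitsky \cite{Cohn-Verbitsky00}. That is a much more economical argument, because the whole quasi-Banach apparatus (analytic convexity of the sum, convergence of analytic families) is packaged in Egert--Kosmala and never has to be touched by hand.

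However, two steps of your argument would not survive as stated. First, the analytic convexity step is flawed: the conical square function $\A_0$ is only sublinear, and \cite[Lemma 7.5]{KaMaMi07} transfers analytic convexity along a linear isomorphism onto a \emph{closed subspace}, not along a sublinear map. Indeed Remark~\ref{rem:HpFIOanconvex}, which you model your argument on, uses that $W$ realizes $\HT^{s,p}_{FIO}(\Rn)$ as a closed linear subspace of a tent space — not a sublinear control. The paper instead notes that each $T^{q}_{r}(\Sp)$ ($q<\infty$) embeds \emph{linearly} and isometrically as a closed subspace of the vector-valued Lebesgue space $L^{q}(\Sp;L^{2}(\Spp,w_{r}))$, giving analytic convexity of each summand, and then analytic convexity of the sum is an \emph{output} of Egert--Kosmala, not an input you have to verify by hand. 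Second, the reduction to $s_{0}=s_{1}=0$ is not ``a single multiplier in the $\sigma$-variable'' when $s_{0}\neq s_{1}$: the rescalings $F\mapsto\sigma^{s_{0}}F$ and $F\mapsto\sigma^{s_{1}}F$ are different maps, so there is no isomorphism of interpolation \emph{couples}. You can salvage this with a Stein-type interpolation using the analytic family $\sigma^{(1-z)s_{0}+zs_{1}}$ (unimodular on each boundary line), but in the quasi-Banach setting that is itself a nontrivial statement — it is precisely Lemma~\ref{lem:Steininter} in the paper's appendix, whose sharp form again needs analytic convexity of the sum, so the logic as you order it is close to circular. Finally, your displayed $G(z)$ is missing the $|B_{k}|^{(1-z)(1/p-1/p_{0})+z(1/p-1/p_{1})}$ factor that renormalizes $T^{p}$-atoms into $T^{p_{j}}$-atoms on the boundary lines; you flag this in prose but it must appear in the formula, and the remaining convergence/boundedness bookkeeping (the $\sup_{z\in\St}$ term in \eqref{eq:analytnorm}) is the genuinely hard part and is left entirely to citations.
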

\begin{proof}

We mostly follow the approach from the Euclidean analogue of this result in \cite[Lemma 8.23]{HoMaMc11}, which involves some additional restrictions on the parameters. 

It is straightforward to check that $T^{q}_{r}(\Sp)$ is a quasi-Banach function space over $\Spp$, in the sense of \cite[Section 3]{Egert-Kosmala24}, for all $0<q\leq\infty$ and $r\in\R$. Moreover, it was observed in \cite{HaToVi91} that $T^{q}_{r}(\Sp)$, for $q<\infty$, is isometrically isomorphic to a closed subspace of $L^{q}(\Sp;L^{2}(\Spp,w_{r}))$, where $w_{r}$ is a suitable weight on $\Spp$ (see also \cite[page 120]{Amenta18}). As such, $T^{q}_{r}(\Sp)$ is both separable and, by \cite[Proposition 7.5 and Lemma 7.6]{KaMaMi07}, analytically convex. On the other hand, $T^{\infty}_{r}(\Sp)$ is a Banach space, and as such also analytically convex.

Now \cite[Theorem 1]{Egert-Kosmala24} implies that  $T^{p_{0}}_{s_{0}}(\Sp)+T^{p_{1}}_{s_{1}}(\Sp)$ is analytically convex as well, and that
\begin{equation}\label{eq:Calderon}
[T^{p_{0}}_{s_{0}}(\Sp),T^{p_{1}}_{s_{1}}(\Sp)]_{\theta}=T^{p_{0}}_{s_{0}}(\Sp)^{1-\theta}T^{p_{1}}_{s_{1}}(\Sp)^{\theta}
\end{equation}
for each $\theta\in(0,1)$. Here  $T^{p_{0}}_{s_{0}}(\Sp)^{1-\theta}T^{p_{1}}_{s_{1}}(\Sp)^{\theta}$ is the \emph{Calder\'{o}n product}, consisting of all measurable $H:\Spp\to\C$ for which there exist $F\in T^{p_{0}}_{s_{0}}(\Sp)$ and  $G\in T^{p_{1}}_{s_{1}}(\Sp)$ such that $|H|\leq |F|^{1-\theta}|G|^{\theta}$, endowed with the natural associated infimum quasi-norm. 

Next, for $0<q<\infty$, $r\in\R$ and $F:\Spp\to\C$ measurable, set 
\[
\A_{r,q} F(x,\w):=\Big(\int_{0}^{\infty}\fint_{B_{\sqrt{\sigma}}(x,\w)}|{\sigma}^{-r}F(y,\nu,\sigma)|^{q}\ud y\ud \nu\frac{\ud \sigma}{\sigma}\Big)^{1/q}
\]
and
\[
\mathcal{C}_{r,q}F(x,\w):=\sup_{B}\Big(\frac{1}{|B|}\int_{T(B)}|\sigma^{-r}F(y,\nu,\sigma)|^{q}\ud y\ud \nu\frac{\ud \sigma}{\sigma}\Big)^{1/q},
\]
where the supremum is taken over all balls $B\subseteq \Sp$ containing $(x,\w)$, with the natural modification for $q=\infty$. 
For $0<t<\infty$, let $T^{t,q}_{r}(\Sp)$ consist of all measurable $F:\Spp\to\C$ such that 
\[
\|F\|_{T^{t,q}_{r}(\Sp)}:=\|\A_{r,q}F\|_{L^{t}(\Sp)}<\infty,
\]
and let $T^{\infty,q}_{r}(\Sp)$ consist of those measurable $F:\Spp\to\C$ such that
\[
\|F\|_{T^{\infty,q}_{r}(\Sp)}:=\|C_{r,q}F\|_{L^{\infty}(\Sp)}<\infty.
\]
It then follows from \cite{Cohn-Verbitsky00} that 
\begin{equation}\label{eq:tentproduct}
T^{t_{0},q_{0}}_{0}(\Sp)T^{t_{1},q_{1}}_{0}(\Sp)=T^{t,q}_{0}(\Sp)
\end{equation}
for all $t_{0},t_{1},q_{0},q_{1},t,q\in(0,\infty]$ such that $\frac{1}{t}=\frac{1}{t_{0}}+\frac{1}{t_{1}}$ and $\frac{1}{q}=\frac{1}{q_{0}}+\frac{1}{q_{1}}$. Here the product in \eqref{eq:tentproduct} is defined analogously to the Calder\'{o}n product. In fact, \cite{Cohn-Verbitsky00} considers the Euclidean setting, but the same method yields \eqref{eq:tentproduct}, at least if one avoids the use of the Poisson kernel in the proof of \cite[Lemma 2.1]{Cohn-Verbitsky00} and only relies on the operator $P_{0}$ considered there.

Now, it easily follows from \eqref{eq:tentproduct} that
\[
T^{t_{0},q_{0}}_{r_{0}}(\Sp)T^{t_{1},q_{1}}_{r_{1}}(\Sp)=T^{t,q}_{r}(\Sp)
\]
for all $r,r_{0},r_{1}\in\R$ with $r=r_{0}+r_{1}$, and one has $T^{t,q}_{r}(\Sp)^{u}=T^{t/u,q/u}_{ru}(\Sp)$ for each $0<u<\infty$. Applying this with $q_{0}=q_{1}=2$ and recalling \eqref{eq:Calderon}, we see that
\[
[T^{p_{0}}_{s_{0}}(\Sp),T^{p_{1}}_{s_{1}}(\Sp)]_{\theta}=T^{p_{0}}_{s_{0}}(\Sp)^{1-\theta}T^{p_{1}}_{s_{1}}(\Sp)^{\theta}=T^{p}_{s}(\Sp),
\] 
as required.
\end{proof}


\bibliographystyle{plain}
\bibliography{bibliography}
\end{document}